\newcommand{\set}[1]{\left\{ #1 \right\}}
\newcommand{\inner}[2]{\left\langle #1,\, #2 \right\rangle}
\newcommand{\norm}[1]{\left\| #1 \right\|}
\DeclareMathOperator*{\argmin}{arg\,min}
\newcommand{\abs}[1]{\left| #1 \right|}
\newcommand{\spn}[1]{\text{span}\set{ #1 }}
\newcommand{\prox}[0]{\text{prox}}
\newcommand{\R}{\mathbb{R}}
\newcommand{\E}[1]{{\mathbb{E}\left[{#1}\right]}}
\newcommand{\normalized}[1]{\frac{#1}{\norm{#1}}}
\newcommand{\parens}[1]{\left( #1 \right)}
\newtheorem{theorem}{Theorem}
\newtheorem{lemma}{Lemma}
\newtheorem{corollary}{Corollary}
\newcommand{\removed}[1]{}
\title{\rule{6.5in}{2pt}\\Tight Complexity Bounds for Optimizing Composite Objectives\\\rule[2mm]{6.5in}{0.5pt}}
\author{}
\date{}
\begin{document}
\maketitle
\vspace{-21mm}
\textbf{Blake Woodworth} \hfill \url{blake@ttic.edu}\\
\textbf{Nathan Srebro} \hfill \url{nati@ttic.edu}\\
{\small Toyota Technological Institute at Chicago, Chicago, IL 60637, USA}

\begin{abstract}
We provide tight upper and lower bounds on the complexity of
minimizing the average of $m$ convex functions using gradient and
prox oracles of the component functions.  We show a significant gap
between the complexity of deterministic vs randomized optimization.
For smooth functions, we show that accelerated gradient descent (AGD)
and an accelerated variant of SVRG are optimal in the deterministic and
randomized settings respectively, and that a gradient oracle is
sufficient for the optimal rate.  For non-smooth functions, having access to prox oracles
reduces the complexity and we present optimal methods based on
smoothing that improve over methods using just gradient
accesses.
\end{abstract}

\section{Introduction}
We consider minimizing the average of $m \geq 2$ convex functions:
\begin{equation} \label{eq:main}
\min_{x \in \mathcal{X}}\set{ F(x) := \frac{1}{m} \sum_{i=1}^m f_i(x) }
\end{equation}
where $\mathcal{X}\subseteq \mathbb{R}^d$ is a closed, convex set, and where the algorithm is given
access to the following gradient (or subgradient in the case of non-smooth functions) and prox oracle for the components:
\begin{equation}
h_F(x,i,\beta) = \left[f_i(x),\ \nabla f_i(x),\ \prox_{f_i}(x,\beta)\right]
\end{equation}
where
\begin{equation} \label{eq:prox}
\prox_{f_i}(x,\beta) = \argmin_{u \in \mathcal{X}} \set{ f_i(u) + \frac{\beta}{2}\norm{x-u}^2 }
\end{equation}

A natural question is how to leverage the prox oracle, and how
much benefit it provides over gradient access alone.  The
prox oracle is potentially much more powerful, as it provides global, rather then local, information about the
function.  For example, for a single function ($m=1$), one prox oracle call (with
$\beta=0$) is sufficient for exact optimization.  Several methods have
recently been suggested for optimizing a sum or average of several
functions using prox accesses to each component, both in the
distributed setting where each components might be handled on a different
machine (e.g.~ADMM \citep{ADMM}, DANE \citep{DANE}, DISCO \citep{DISCO}) or for
functions that can be decomposed into several ``easy'' parts
(e.g.~PRISMA \citep{PRISMA}).  But as far as we are
aware, no meaningful lower bound was previously known on the number of
prox oracle accesses required even for the average of two functions ($m=2$).

The optimization of composite objectives of the form \eqref{eq:main}
has also been extensively studied in the context of minimizing
empirical risk over $m$ samples.  Recently, stochastic methods such as SDCA \citep{SDCA}, SAG
\citep{SAG}, SVRG \citep{SVRG}, and other variants, have been presented
which leverage the finite nature of the problem to reduce the variance
in stochastic gradient estimates and obtain guarantees that dominate both
batch and stochastic gradient descent.  As methods with improved
complexity, such as accelerated SDCA \citep{ASDCA}, accelerated SVRG, and \textsc{Katyusha} \citep{Katyusha}, have been presented,
researchers have also tried to obtain lower bounds on the best possible complexity in this settings---but 
as we survey below, these have not been satisfactory so far.

In this paper, after briefly surveying methods for smooth, composite optimization,
we present methods for optimizing non-smooth composite objectives, 
which show that prox oracle access can indeed be leveraged to improve 
over methods using merely subgradient access (see Section \ref{sec:LipUppers}).  We then
turn to studying lower bounds. We consider algorithms that
access the objective $F$ only through the oracle $h_F$ and provide
lower bounds on the number of such oracle accesses (and thus the
runtime) required to find $\epsilon$-suboptimal solutions.  We
consider optimizing both Lipschitz (non-smooth) functions and smooth
functions, and guarantees that do and do not depend on strong
convexity, distinguishing between deterministic optimization
algorithms and randomized algorithms.  Our upper and lower bounds are
summarized in Table \ref{tab:main}.

\begin{table}
\center
\setlength{\tabcolsep}{1.2mm}
\begin{tabularx}{\textwidth}{| c | c | >{\centering}m{38mm} | >{\centering}m{32mm} | *2{>{\centering\arraybackslash}X|}}
% headings row 1
\cline{3-6}
\multicolumn{2}{c|}{}
&\multicolumn{2}{c|}{$L$-Lipschitz}
&\multicolumn{2}{c|}{$\gamma$-Smooth} \\ \cline{3-6}
% headings row 2
\multicolumn{2}{c|}{}& Convex, & $\lambda$-Strongly & Convex, & $\lambda$-Strongly \vspace{-4mm} \\ 
\multicolumn{2}{c|}{}& $\norm{x}\leq B$ & Convex & $\norm{x}\leq B$ & Convex \\ 
\hline
%
% Deterministic upper bounds
\multirow{4}{*}{\rotatebox[origin=c]{90}{Deterministic}} & \multirow{2}{*}{\rotatebox[origin=c]{90}{Upper}}
&$\frac{mLB}{\epsilon}$
&$\frac{mL}{\sqrt{\lambda\epsilon}}$
&$m\sqrt{\frac{\gamma B^2}{\epsilon}}$
&$m\sqrt{\frac{\gamma}{\lambda}}\log\frac{\epsilon_0}{\epsilon}$
\rule[0pt]{0pt}{14pt} 
\\&
& {\scriptsize \textbf{(Section 3)}}
& {\scriptsize \textbf{(Section 3)}}
& {\scriptsize (AGD)}
& {\scriptsize (AGD)}
\rule[-6pt]{0pt}{6pt} 
\\\cline{2-6}
%
% Deterministic lower bounds
& \multirow{2}{*}{\rotatebox[origin=c]{90}{Lower}}
&$\frac{mLB}{\epsilon}$
&$ \frac{mL}{\sqrt{\lambda\epsilon}}$
&$m\sqrt{\frac{\gamma B^2}{\epsilon}}$
&$m\sqrt{\frac{\gamma}{\lambda}}\log\frac{\epsilon_0}{\epsilon}$ 
\rule[0pt]{0pt}{14pt}
\\&
& {\scriptsize \textbf{(Section 4)}}
& {\scriptsize \textbf{(Section 4)}}
& {\scriptsize \textbf{(Section 4)}}
& {\scriptsize \textbf{(Section 4)}}
\rule[-6.5pt]{0pt}{6.5pt}
\\\hline
%
% Randomized upper bounds
\multirow{4}{*}{\rotatebox[origin=c]{90}{$\,$Randomized}} & \multirow{2}{*}{\rotatebox[origin=c]{90}{Upper}}
&{\small $\tfrac{L^2B^2}{\epsilon^2}\!\!\wedge\!\!\left(\!m\log\!\tfrac{1}{\epsilon} \!+\! \frac{\sqrt{m}LB}{\epsilon}\!\right)$} 
&{\small $\tfrac{L^2}{\lambda\epsilon}\!\!\wedge\!\!\left(\!m\log\!\frac{1}{\epsilon} \!+\! \frac{\sqrt{m}L}{\sqrt{\lambda\epsilon}}\!\right)$}
&{\small $m \log\!\frac{\epsilon_0}{\epsilon}\!+\!\sqrt{\!\frac{m\gamma B^2}{\epsilon}}$}
&{\small $\left(m \!+\! \sqrt{\!\frac{m\gamma}{\lambda}} \right)\!\log\!\tfrac{\epsilon_0}{\epsilon}$} 
% \rule[0pt]{0pt}{13pt} 
\\&
& {\scriptsize (SGD, A-SVRG)}
& {\scriptsize (SGD, A-SVRG)}
& {\scriptsize (A-SVRG)}
& {\scriptsize (A-SVRG)}
% & {\scriptsize (SGD, A-SVRG/\textsc{Katyusha})}
% & {\scriptsize (SGD, A-SVRG/\textsc{Katyusha})}
% & {\scriptsize (A-SVRG/\textsc{Katyusha})}
% & {\scriptsize (A-SVRG/\textsc{Katyusha})}
\rule[-6pt]{0pt}{6pt} 
\\\cline{2-6}
%
% Randomized lower bounds
&\multirow{2}{*}{\rotatebox[origin=c]{90}{Lower}}
&{\small $\tfrac{L^2B^2}{\epsilon^2} \!\wedge\! \left(\!m \!+\!\frac{\sqrt{m}LB}{\epsilon}\!\right)$}
&{\small $\tfrac{L^2}{\lambda\epsilon} \!\wedge\! \left(\!m \!+\! \frac{\sqrt{m}L}{\sqrt{\lambda\epsilon}}\!\right)$}
&{\small $m + \sqrt{\frac{m\gamma B^2}{\epsilon}}$}
&{\small $m + \sqrt{\frac{m\gamma}{\lambda}}\log\frac{\epsilon_0}{\epsilon}$}
\rule[-3pt]{0pt}{15pt} 
\\&
& {\scriptsize \textbf{(Section 5)}}
& {\scriptsize \textbf{(Section 5)}}
& {\scriptsize \textbf{(Section 5)}}
& {\scriptsize \textbf{(Section 5)}}
\rule[-6.5pt]{0pt}{6.5pt} 
\\\hline
\end{tabularx}
\caption{\small Upper and lower bounds on the number
  of grad-and-prox oracle accesses needed to find
  $\epsilon$-suboptimal solutions for each function class. These are exact up to constant factors except for the lower bounds for smooth and strongly convex functions, which hide extra $\log\lambda/\gamma$ and $\log \sqrt{m\lambda/\gamma}$ factors for deterministic and randomized algorithms. Here, $\epsilon_0$ is the suboptimality of the point $0$.}\label{tab:main}
\vspace{-3mm}
\end{table}

As shown in the table, we provide matching upper and lower bounds (up
to a log factor) for all function and algorithm classes.  In particular, our bounds
establish the optimality (up to log factors) of accelerated SDCA,
SVRG, and SAG for randomized finite-sum optimization, and
also the optimality of our deterministic smoothing algorithms for
non-smooth composite optimization.

\paragraph{On the power of gradient vs prox oracles} For non-smooth
functions, we show that having access to prox oracles for the
components can reduce the polynomial dependence on $\epsilon$ from
$1/\epsilon^2$ to $1/\epsilon$, or from $1/(\lambda \epsilon)$ to
$1/\sqrt{\lambda \epsilon}$ for $\lambda$-strongly convex functions.
However, all of the optimal complexities for smooth functions can be
attained with only component gradient access using accelerated gradient descent (AGD) or accelerated
SVRG.  Thus the worst-case complexity cannot be improved
(at least not significantly) by using the more powerful prox oracle.

\paragraph{On the power of randomization} We establish a significant
gap between deterministic and randomized algorithms for finite-sum
problems. Namely, the dependence on the number of components must be
linear in $m$ for any deterministic algorithm, but can be reduced to
$\sqrt{m}$ (in the typically significant term) using randomization.
We emphasize that the randomization here is only in the
algorithm---not in the oracle.  We always assume the oracle returns an
exact answer (for the requested component) and is {\em not} a
stochastic oracle.  The distinction is that the {\em algorithm} is
allowed to flip coins in deciding what operations and queries to perform but the oracle must
return an exact answer to that query (of course, the algorithm could simulate a stochastic oracle).

\paragraph{Prior Lower Bounds} 
Several authors recently presented lower bounds for optimizing
\eqref{eq:main} in the smooth and strongly convex setting using
component gradients.  \citet{AgarwalBottou} presented a lower bound of
$\Omega\left( m + \sqrt{\tfrac{m\gamma}{\lambda}}
  \log\tfrac{1}{\epsilon} \right)$.  However, their bound is valid only
for {\em deterministic} algorithms (thus not including SDCA, SVRG,
SAG, etc.)---we not only consider randomized algorithms, but also show
a much higher lower bound for deterministic algorithms
(i.e.~the bound of \citeauthor{AgarwalBottou} is loose).
Improving upon this, \citet{Lan} shows a
similar lower bound for a restricted class of randomized algorithms: the
algorithm must select which component to query for a gradient by
drawing an index from a fixed distribution, but the algorithm must otherwise be
deterministic in how it uses the gradients, and its iterates must lie
in the span of the gradients it has received.  This restricted class
includes SAG, but not SVRG nor perhaps other realistic attempts
at improving over these. Furthermore, both bounds allow only
gradient accesses, not prox computations. Thus
SDCA, which requires prox accesses, and potential variants are
not covered by such lower bounds. We prove as similar lower bound to Lan's,
 but our analysis is much more general and applies to
\emph{any} randomized algorithm, making {\em any} sequence of queries to a gradient {\em and} prox oracle, and
without assuming that iterates lie in the span of previous
responses. In addition to smooth
functions, we also provide lower bounds for non-smooth problems which
were not considered by these previous attempts.  Another recent
observation \cite{ShaiSlides} was that with access only to random
component subgradients {\em without} knowing the component's identity, an
algorithm must make $\Omega(m^2)$ queries to optimize well. This shows how
relatively subtle changes in the oracle can have a dramatic effect on
the complexity of the problem. Since the oracle we consider is quite powerful,
our lower bounds cover a very broad family of algorithms, including SAG, SVRG, and SDCA.

Our deterministic lower bounds are inspired by a lower
bound on the number of rounds of communication required for
optimization when each $f_i$ is held by a different machine and when
iterates lie in the span of certain permitted calculations
\citep{Ohad}.  Our construction for $m=2$ is similar to theirs (though in a
different setting), but their analysis considers neither scaling with $m$
(which has a different role in their setting) nor randomization.

\paragraph{Notation and Definitions}
We use $\norm{\cdot}$ to denote the standard Euclidean norm on
$\mathbb{R}^d$.  We say that a function $f$ is $L$-Lipschitz
continuous on $\mathcal{X}$ if $\forall x,y \in \mathcal{X}\ 
\abs{f(x) - f(x)} \leq L\norm{x - y}$; $\gamma$-smooth on
$\mathcal{X}$ if it is differentiable and its gradient is
$\gamma$-Lipschitz on $\mathcal{X}$; and $\lambda$-strongly
convex on $\mathcal{X}$ if $\forall x,y \in \mathcal{X}\quad f_i(y)
\geq f_i(x) + \inner{\nabla f_i(x)}{ y-x} +
\frac{\lambda}{2}\norm{x-y}^2$.  We consider optimizing \eqref{eq:main}
 under four combinations of assumptions: each
component $f_i$ is either $L$-Lipschitz or $\gamma$-smooth, and
either $F(x)$ is $\lambda$-strongly convex or its domain is
bounded, $\mathcal{X}\subseteq\left\{x:\norm{x}\leq B\right\}$.

%%%%%%%%%%%%%%%%%%%%%%%%%%%%%%%%%%%%%%%%%%%%%
%%%%%%%%%%			SMOOTH UPPER BOUNDS		    %%%%%%%%%%
%%%%%%%%%%%%%%%%%%%%%%%%%%%%%%%%%%%%%%%%%%%%%
\section{Optimizing Smooth Sums \label{sec:SmoothUpper}}

We briefly review the best known methods for optimizing
\eqref{eq:main} when the components are $\gamma$-smooth, yielding the
upper bounds on the right half of Table \ref{tab:main}.  These upper
bounds can be obtained using only component gradient access, without
need for the prox oracle.

We can obtain exact gradients of $F(x)$ by computing all $m$ component
gradients $\nabla f_i(x)$.
Running accelerated gradient descent (AGD) \citep{NesterovAGD} on
$F(x)$ using these exact gradients achieves the upper complexity bounds for deterministic
algorithms and smooth problems (see
Table \ref{tab:main}).

SAG \cite{SAG}, SVRG \cite{SVRG} and related methods use randomization to
sample components, but also leverage the finite nature of the objective to control the
variance of the gradient estimator used.  Accelerating these methods
using the Catalyst framework \cite{GeneralAcceleration} ensures that for
$\lambda$-strongly convex objectives we have $\E{F(x^{(k)})-F(x^*)} <
\epsilon$ after $k=\mathcal{O}\left(\left(m +
    \sqrt{\tfrac{m\gamma}{\lambda}}\right) \log^2
  \tfrac{\epsilon_0}{\epsilon} \right)$ iterations, where $F(0) - F(x^*) =
\epsilon_0$.  \textsc{Katyusha} \cite{Katyusha} is a more direct approach to
accelerating SVRG which avoids extraneous log-factors, yielding the
complexity $k=\mathcal{O}\left(\left(m +
    \sqrt{\tfrac{m\gamma}{\lambda}}\right)
  \log\tfrac{\epsilon_0}{\epsilon} \right)$ indicated in Table \ref{tab:main}.

When $F$ is not strongly convex, adding a regularizer to the objective
and instead optimizing
$F_{\lambda}(x)=F(x)+\frac{\lambda}{2}\norm{x}^2$ with
$\lambda=\epsilon/B^2$ results in an oracle complexity of
$\mathcal{O}\left(\left(m + \sqrt{\tfrac{m\gamma B^2}{\epsilon}}\right)
  \log\tfrac{\epsilon_0}{\epsilon} \right)$.  The log-factor in the
second term can be removed using the more delicate reduction of
\citet{HazanReduction}, which involves optimizing $F_\lambda(x)$ for
progressively smaller values of $\lambda$, yielding the upper bound in
the table.

\textsc{Katyusha} and Catalyst-accelerated SAG or SVRG use only
gradients of the components.  Accelerated SDCA \cite{ASDCA} achieves
a similar complexity using gradient and prox oracle access.

%%%%%%%%%%%%%%%%%%%%%%%%%%%%%%%%%%%%%%%%%%%%%
%%%%%%%%%%		   NON-SMOOTH UPPER BOUNDS           %%%%%%%%%%
%%%%%%%%%%%%%%%%%%%%%%%%%%%%%%%%%%%%%%%%%%%%%
\section{Leveraging Prox Oracles for Lipschitz Sums \label{sec:LipUppers}}

In this section, we present algorithms for leveraging the prox
oracle to minimize \eqref{eq:main} when each component is
$L$-Lipschitz.  This will be done by using the prox oracle to
``smooth'' each component, and optimizing the new, smooth sum which
approximates the original problem.  This idea was used in order to
apply \textsc{Katyusha} \cite{Katyusha} and accelerated SDCA \cite{ASDCA} to
non-smooth objectives.  We are not aware of a previous explicit
presentation of the AGD-based deterministic algorithm, which achieves
the deterministic upper complexity indicated in Table \ref{tab:main}.

The key is using a prox oracle to obtain gradients of the
$\beta$-Moreau envelope of a non-smooth function, $f$, defined as:
\begin{equation}\label{eq:moreau}
f^{(\beta)}(x) = \inf_{u\in\mathcal{X}} f(u) + \frac{\beta}{2}\norm{x-u}^2
\end{equation}

\begin{lemma}[{\citep[Lemma 2.2]{PRISMA}},
{\citep[Proposition 12.29]{BauschkeCombettes}}, 
following \citep{Nesterov05}]\label{lem:prox}
Let $f$ be convex and $L$-Lipschitz continuous.  For any $\beta > 0$,
\begin{enumerate}
\item $f^{(\beta)}$ is $\beta$-smooth
\item $\nabla (f^{(\beta)})(x) = \beta (x - \prox_{f}(x,\beta))$
\item $f^{(\beta)}(x) \leq f(x) \leq f^{(\beta)}(x) + \frac{L^2}{2\beta}$
\end{enumerate}
\end{lemma}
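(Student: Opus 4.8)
Everything hinges on the minimizer
\[
u^*(x) := \prox_f(x,\beta) = \argmin_{u\in\mathcal{X}}\set{ f(u) + \tfrac{\beta}{2}\norm{x-u}^2 },
\]
which exists and is unique because the minimand is $\beta$-strongly convex (hence coercive) in $u$ over the closed convex set $\mathcal{X}$. Folding the constraint into $f$ by setting $g := f + \iota_{\mathcal{X}}$ (with $\iota_{\mathcal{X}}$ the $\{0,+\infty\}$-indicator of $\mathcal{X}$), Fermat's rule gives the first-order optimality condition $\beta(x - u^*(x)) \in \partial g(u^*(x))$. This single inclusion, together with convexity and monotonicity of $\partial g$, drives all three parts.

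For part 3 I would take $u = x$ in the infimum for the upper bound $f^{(\beta)}(x) \le f(x)$, and use $L$-Lipschitzness, $f(u) \ge f(x) - L\norm{x-u}$, together with $\inf_{t\ge 0}\!\big( -Lt + \tfrac{\beta}{2}t^2 \big) = -\tfrac{L^2}{2\beta}$, for the matching lower bound $f^{(\beta)}(x) \ge f(x) - \tfrac{L^2}{2\beta}$.

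For part 2 the plan is threefold: (i) compare the optimal value at $x$ against the (generally suboptimal) choice $u = u^*(x)$ evaluated at a nearby point $y$, and expand the squares, to obtain the one-sided quadratic bound
\[
f^{(\beta)}(y) \;\le\; f^{(\beta)}(x) + \inner{\beta(x - u^*(x))}{y - x} + \tfrac{\beta}{2}\norm{y - x}^2 \qquad \text{for all } y;
\]
(ii) observe that $f^{(\beta)}$ is convex, being the infimal convolution of the convex functions $g$ and $\tfrac{\beta}{2}\norm{\cdot}^2$; (iii) conclude differentiability with gradient $v := \beta(x - u^*(x))$ by noting that the bound forces the (existing, since $f^{(\beta)}$ is finite and convex) directional derivative to satisfy $(f^{(\beta)})'(x;d) \le \inner{v}{d}$ for every direction $d$, while convexity gives $(f^{(\beta)})'(x;d) + (f^{(\beta)})'(x;-d) \ge 0$; combining these pins both directional derivatives to $\pm\inner{v}{d}$, so $f^{(\beta)}$ is differentiable at $x$ with $\nabla f^{(\beta)}(x) = \beta(x - \prox_f(x,\beta))$. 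Then part 1 follows from firm nonexpansiveness of the prox: applying the optimality inclusion at $x$ and at $y$ and using monotonicity of $\partial g$ yields $\norm{u^*(x) - u^*(y)}^2 \le \inner{x - y}{u^*(x) - u^*(y)}$; writing $a := x-y$, $b := u^*(x)-u^*(y)$, this is $\norm{b}^2 \le \inner{a}{b}$, so $\norm{a-b}^2 = \norm{a}^2 - 2\inner{a}{b} + \norm{b}^2 \le \norm{a}^2 - \norm{b}^2 \le \norm{a}^2$, i.e.\ $x \mapsto x - u^*(x)$ is nonexpansive, and since $\nabla f^{(\beta)}(x) - \nabla f^{(\beta)}(y) = \beta(a - b)$ we get that $\nabla f^{(\beta)}$ is $\beta$-Lipschitz.

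I expect the one genuinely delicate step to be (iii): promoting the one-sided quadratic upper bound plus convexity into honest differentiability and the exact gradient identity. The rest is bookkeeping. A minor point requiring care is that the optimality and monotonicity arguments be valid on the constrained domain, which is handled cleanly by the substitution $f \rightsquigarrow f + \iota_{\mathcal{X}}$. Of course, all three statements are classical and can alternatively be quoted from the cited \citep{PRISMA} and \citep{BauschkeCombettes} references.
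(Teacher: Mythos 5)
Your plan is correct, and each of the three steps goes through as you describe: the direct bound for part~3; the one-sided quadratic majorant from testing the $\inf$ at $u^*(x)$ plus convexity of the infimal convolution for part~2; and firm nonexpansiveness of $\prox_{g}$ (with $g = f + \iota_{\mathcal{X}}$) via monotonicity of $\partial g$ for part~1. The one place you flag as delicate, step~(iii), is in fact fine: from $(f^{(\beta)})'(x;d)\le\inner{v}{d}$ applied at both $d$ and $-d$ together with $(f^{(\beta)})'(x;d)+(f^{(\beta)})'(x;-d)\ge 0$ you force $(f^{(\beta)})'(x;d)=\inner{v}{d}$ for all $d$, and in $\mathbb{R}^d$ a convex function with linear directional derivatives is Fr\'echet differentiable.

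For comparison: the paper itself offers no proof of this lemma---it is stated purely as a citation to \citet{PRISMA} (Lemma~2.2) and \citet{BauschkeCombettes} (Proposition~12.29), following \citet{Nesterov05}---so there is no in-paper argument to match against. Your outline is essentially the Bauschke--Combettes route (optimality inclusion for the prox, convexity of the Moreau envelope, firm nonexpansiveness), reconstructed from first principles. That is the standard derivation, and your care in folding $\mathcal{X}$ into $g=f+\iota_{\mathcal{X}}$ is the right way to make the constrained prox in \eqref{eq:prox} fit the usual unconstrained framework. No gaps.
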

Consequently, we can consider the smoothed problem
\begin{equation} \label{eq:tildeF}
\min_{x \in \mathcal{X}}\set{ \tilde{F}^{(\beta)}(x) := \frac{1}{m} \sum_{i=1}^m f^{(\beta)}_i(x) }.
\end{equation}
While $\tilde{F}^{(\beta)}$ is \emph{not}, in general, the
$\beta$-Moreau envelope of $F$, it is $\beta$-smooth, we can calculate
the gradient of its components using the oracle $h_F$, and
$\tilde{F}^{(\beta)}(x) \leq F(x) \leq \tilde{F}^{(\beta)}(x) +
\frac{L^2}{2\beta}$.  Thus, to obtain an $\epsilon$-suboptimal
solution to \eqref{eq:main} using $h_F$, we set $\beta=L^2/\epsilon$
and apply any algorithm which can optimize \eqref{eq:tildeF} using
gradients of the $L^2/\epsilon$-smooth components, to within
$\epsilon/2$ accuracy. With the rates presented in Section
\ref{sec:SmoothUpper}, using AGD on \eqref{eq:tildeF} yields a
complexity of $\mathcal{O}\left(\frac{mLB}{\epsilon}\right)$ in the
deterministic setting.  When the functions are $\lambda$-strongly
convex, smoothing with a fixed $\beta$ results in a spurious
log-factor. To avoid this, we again apply the reduction of
\citet{HazanReduction}, this time optimizing $\tilde{F}^{(\beta)}$ for
increasingly large values of $\beta$. This leads to the upper bound of
$\mathcal{O}\left(\frac{mL}{\sqrt{\lambda\epsilon}}\right)$ when used
with AGD (see Appendix \ref{appendix:LipUpper} for details).

Similarly, we can apply an accelerated randomized algorithm (such as
\textsc{Katyusha}) to the smooth problem $\tilde{F}^{(\beta)}$ to obtain complexities of
$\mathcal{O}\left(m\log\frac{\epsilon_0}{\epsilon} +
  \frac{\sqrt{m}LB}{\epsilon}\right)$ and
$\mathcal{O}\left(m\log\frac{\epsilon_0}{\epsilon} +
  \frac{\sqrt{m}L}{\sqrt{\lambda\epsilon}}\right)$---this matches the
presentation of \citet{Katyusha} and is similar to that of \citet{ASDCA}.

Finally, if $m > L^2 B^2/\epsilon^2$ or $m>L^2/(\lambda \epsilon)$,
stochastic gradient descent is a better randomized alternative,
yielding complexities of $\mathcal{O}(L^2 B^2/\epsilon^2)$ or
$\mathcal{O}(L^2/(\lambda \epsilon))$.

% \removed{We note that all of these upper bounds that rely upon smoothing methods only improve oracle complexity when used with accelerated methods. In particular, using non-accelerated gradient descent on $F^{(\beta)}$ leads to an oracle complexity of $\mathcal{O}\left( \frac{mL^2B^2}{\epsilon^2} \right)$, which is no better than the convergence rate of gradient descent applied directly to $F$. Similarly, the use of non-accelerated SVRG on $F^{(\beta)}$ leads to a convergence rate of $\mathcal{O}\left( m\log\frac{LB}{\epsilon} + \frac{mL^2B^2}{\epsilon^2} \right)$, which is worse than the rate of stochastic gradient descent (SGD) applied directly to $F$.}

%%%%%%%%%%%%%%%%%%%%%%%%%%%%%%%%%%%%%%%%%%%%%
%%%%%%%%%%			  DET LOWER BOUNDS    	           %%%%%%%%%%
%%%%%%%%%%%%%%%%%%%%%%%%%%%%%%%%%%%%%%%%%%%%%
\section{Lower Bounds for Deterministic Algorithms}

We now turn to establishing lower bounds on the oracle complexity of
optimizing \eqref{eq:main}.  We first consider only
deterministic optimization algorithms.  What we would like to show is
that for any deterministic optimization algorithm we can construct a
``hard'' function for which the algorithm cannot find an
$\epsilon$-suboptimal solution until it has made many oracle accesses.  Since
the algorithm is deterministic, we can construct such a function by
simulating the (deterministic) behavior of the algorithm.  This can
be viewed as a game, where an adversary controls the oracle being
used by the algorithm. At each iteration the algorithm queries the
oracle with some triplet $(x,i,\beta)$ and the adversary responds with
an answer. This answer must be consistent with all previous answers, but the adversary 
ensures it is also consistent with a composite function $F$ that the
algorithm is far from optimizing. The ``hard'' function is then
gradually defined in terms of the behavior of the optimization
algorithm.

To help us formulate our constructions, we define a ``round'' of
queries as a series of queries in which $\lceil
\frac{m}{2} \rceil$ distinct functions $f_i$ are queried. The first
round begins with the first query and continues until exactly $\lceil
\frac{m}{2} \rceil$ unique functions have been queried. The second
round begins with the next query, and continues until exactly $\lceil
\frac{m}{2} \rceil$ more distinct components have been queried in the
second round, and so on until the algorithm terminates. This definition 
is useful for analysis but requires no assumptions about
the algorithm's querying strategy.

\subsection{Non-Smooth Components}

We begin by presenting a lower bound for deterministic optimization of
\eqref{eq:main} when each component $f_i$ is
convex and $L$-Lipschitz continuous, but is not necessarily strongly convex,
on the domain $\mathcal{X}=\left\{ x :
  \norm{x}\leq B \right\}$.  Without loss of generality, we can
consider $L=B=1$.  We will construct functions of the following form:
\begin{equation}\label{eq:DetLipF}
f_i(x) = \frac{1}{\sqrt{2}}\abs{b - \inner{x}{v_0}} + \frac{1}{2\sqrt{k}} \sum_{r=1}^{k} \delta_{i,r} \abs{\inner{x}{v_{r-1}} - \inner{x}{v_{r}}}.
\end{equation}
where $k=\lfloor \tfrac{1}{12\epsilon} \rfloor$, $b=\tfrac{1}{\sqrt{k+1}}$, and $\set{v_r}$ is an orthonormal set of vectors in $\R^d$ chosen 
according to the behavior of the algorithm such that $v_r$ is orthogonal
to all points at which the algorithm queries $h_F$ before round $r$, and where $\delta_{i,r}$ are indicators chosen so that
$\delta_{i,r}=1$ if the algorithm does {\em not} query component $i$
in round $r$ (and zero otherwise).  To see how this is possible, consider the following truncations of \eqref{eq:DetLipF}:
\begin{equation}
f_i^t(x) = \frac{1}{\sqrt{2}}\abs{b - \inner{x}{v_0}} + \frac{1}{2\sqrt{k}} \sum_{r=1}^{t-1} \delta_{i,r} \abs{\inner{x}{v_{r-1}} - \inner{x}{v_{r}}}
\end{equation}

During each round $t$, the adversary answers queries  according to
$f_i^t$, which depends only on $v_r,\delta_{i,r}$ for $r<t$, i.e.~from
previous rounds.  When the round is completed, $\delta_{i,t}$ is
determined and $v_{t}$ is chosen to be orthogonal to the
vectors $\set{v_0,...,v_{t-1}}$ as well as every point queried by the
algorithm so far, thus defining $f_i^{t+1}$ for the next round.  In 
Appendix \ref{appendix:DetLBLower} we prove that these responses based on $f_i^t$ are
consistent with $f_i$.

The algorithm can only learn $v_{r}$ after it
completes round $r$---until then every iterate is orthogonal to it
by construction.  The average of these functions reaches its
minimum of $F(x^*)=0$ at $x^* = b\sum_{r=0}^{k}v_r$, so we can view
optimizing these functions as the task of discovering the vectors
$v_r$---even if only $v_k$ is missing, a suboptimality better than
$b/(6\sqrt{k})>\epsilon$ cannot be achieved.  Therefore, the
deterministic algorithm must complete at least $k$ rounds of
optimization, each comprising at least
$\left\lceil\frac{m}{2}\right\rceil$ queries to $h_F$ in order to
optimize $F$.  The key to this construction is that even though each term
$\abs{\inner{x}{v_{r-1}} - \inner{x}{v_{r}}}$ appears in $m/2$
components, and hence has a strong effect on the average $F(x)$, we
can force a deterministic algorithm to make $\Omega(m)$ queries during
each round before it finds the next relevant term.  We obtain (for
complete proof see Appendix \ref{appendix:DetLBLower}):

\begin{restatable}{theorem}{DetLBLower}\label{thm:DetLBLower}
For any $L,B > 0$, any $0 < \epsilon < \frac{LB}{12}$, any $m \geq 2$, and any deterministic algorithm $A$ with access to $h_F$, there exists a dimension $d = \mathcal{O}\left( \frac{mLB}{\epsilon} \right)$, and $m$ functions $f_i$ defined over $\mathcal{X} = \set{x \in \mathbb{R}^d: \norm{x}\leq B}$, which are convex and $L$-Lipschitz continuous, such that in order to find a point $\hat{x}$ for which $F(\hat{x}) - F(x^*) < \epsilon$, $A$ must make $\Omega\left( \frac{mLB}{\epsilon} \right)$ queries to $h_F$.
\end{restatable}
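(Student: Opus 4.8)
The plan is to make rigorous the adversary argument sketched above. Fix a deterministic algorithm $A$, set $L=B=1$ without loss of generality (the general case follows by rescaling), and let $k=\lfloor 1/(12\epsilon)\rfloor$, $b=1/\sqrt{k+1}$. I will run $A$ against an adaptive oracle that maintains, after $t-1$ completed rounds, the partial data $v_0,\dots,v_{t-1}$ (orthonormal, each $v_r$ orthogonal to every query point of $A$ from rounds $1,\dots,r$) and the indicators $\delta_{i,r}$ for $r<t$. During round $t$ the oracle answers every query $(x,i,\beta)$ as if the true component were the truncation $f_i^t$; since $f_i^t$ depends only on data from rounds $<t$, this is well-defined. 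At the end of round $t$ (i.e. when $\lceil m/2\rceil$ distinct components have been queried in the round) the oracle records which components were queried, thereby fixing $\delta_{i,t}$, and then picks $v_t$ orthogonal to $\{v_0,\dots,v_{t-1}\}$ and to all query points so far — possible as long as the ambient dimension $d$ exceeds the total number of such vectors, and the total query count we are lower-bounding is $O(m/\epsilon)$, so $d=O(m/\epsilon)$ suffices. After the algorithm halts I set all remaining $v_r$ (and $v_0$, if round $1$ never completed) to arbitrary fresh orthonormal vectors and all undetermined $\delta_{i,r}$ to $1$, which pins down the final functions $f_i$ of the form \eqref{eq:DetLipF}.

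The three things that need to be checked are: (i) \emph{consistency} — every answer the oracle gave during round $t$ according to $f_i^t$ is in fact a valid value of $f_i$, its subgradient, and its prox; (ii) the $f_i$ are convex and $1$-Lipschitz; (iii) the \emph{hardness} — any point in the span of the query points made through round $t$ is $\Omega(\epsilon)$-suboptimal for $F$ unless $t\ge k$. For (ii): each $f_i$ is a nonnegative combination of absolute values of unit-norm linear functionals, with coefficients $\tfrac{1}{\sqrt2}$ and $\tfrac{1}{2\sqrt k}\delta_{i,r}$; since the $v_r$ are orthonormal, for any fixed $x$ at most the relevant terms contribute and a short computation bounds the subgradient norm by $1$, giving $1$-Lipschitzness, and convexity is immediate. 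For (iii): $F(x)=\tfrac1m\sum_i f_i$ has the term $\tfrac1{\sqrt2}|b-\inner{x}{v_0}|$ appearing in all $m$ components, and each cross term $|\inner{x}{v_{r-1}}-\inner{x}{v_r}|$ appears in exactly the $m-(\text{number queried in round }r)\ge \lceil m/2\rceil$ components with $\delta_{i,r}=1$, so $F(x)\ge \tfrac1{\sqrt2}|b-\inner{x}{v_0}| + \tfrac{1}{2\sqrt k}\cdot\tfrac12\sum_{r=1}^k|\inner{x}{v_{r-1}}-\inner{x}{v_r}|$ up to the averaging factor; the minimum $F(x^*)=0$ is attained at $x^*=b\sum_{r=0}^k v_r$ (note $\|x^*\|^2=(k+1)b^2=1$, so $x^*\in\mathcal X$). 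If $\hat x$ lies in the span of query points made before round $k$ completes, then $\inner{\hat x}{v_k}=0$ by construction, and telescoping the chain of absolute values forces $\sum_{r}|\inner{\hat x}{v_{r-1}}-\inner{\hat x}{v_r}| + |b - \inner{\hat x}{v_0}| \ge b - \inner{\hat x}{v_k} = b$, so $F(\hat x)\gtrsim b/\sqrt k \ge 1/\sqrt{(k+1)k} > 12\epsilon\cdot\text{const} > \epsilon$ for the stated range of $\epsilon$; the constants work out because $k\le 1/(12\epsilon)$. A minor point: one must handle \emph{any} $\hat x\in\mathcal X$, not just ones in the span — but a standard projection argument shows that replacing $\hat x$ by its projection onto the span of the queries (plus $v_k$) only decreases $F$, since $F$ depends on $x$ only through $\inner{x}{v_r}$, $r=0,\dots,k$; so it suffices to argue for $\hat x$ in that span, and without round $k$ the coordinate $\inner{\hat x}{v_k}$ is forced to $0$.

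Concluding: since $A$ cannot output an $\epsilon$-suboptimal point before round $k$ is complete, and each of the $k$ rounds requires at least $\lceil m/2\rceil$ oracle queries by definition of a round, $A$ makes at least $k\lceil m/2\rceil = \Omega(m/\epsilon) = \Omega(mLB/\epsilon)$ queries, with the dimension bound $d=O(mLB/\epsilon)$ as claimed. The main obstacle I expect is item (i), consistency of the prox answers: unlike gradient-span lower bounds, here the oracle also returns $\prox_{f_i}(x,\beta)$, which is a \emph{global} minimizer of $f_i(u)+\tfrac\beta2\|x-u\|^2$, and one must verify that the minimizer of the truncation $f_i^t$ plus the quadratic coincides with that of the full $f_i$ plus the quadratic — i.e. that the extra terms $\delta_{i,r}|\inner{u}{v_{r-1}}-\inner{u}{v_r}|$ for $r\ge t$ are not ``activated'' at the prox point. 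This is where the orthogonality construction is essential: because $v_r$ for $r\ge t$ is orthogonal to $x$ and to everything seen so far, the prox solution stays in the span of the old data (adding components along the new $v_r$'s would only increase both the quadratic penalty and, one checks, the value of the dormant terms), so the two prox problems have the same solution. Making this argument fully rigorous — pushing through the KKT/subdifferential conditions for the nonsmooth prox subproblem — is the technical heart of the proof, and is what Appendix~\ref{appendix:DetLBLower} is devoted to.
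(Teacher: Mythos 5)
Your proposal follows the paper's Appendix~\ref{appendix:DetLBLower} essentially step for step: the same round-based construction, the same truncated functions $f_i^t$ driving the adversary's answers, the same telescoping bound $F(x)-F(x^*)\geq \tfrac{1}{\sqrt 2}\abs{b-\inner{x}{v_0}}+\tfrac{1}{6\sqrt k}\abs{\inner{x}{v_0}-\inner{x}{v_k}}\geq \tfrac{b}{6\sqrt k}$, and the same two-case handling of the dimension. The prox-consistency step you defer is indeed the technical heart of the appendix; the paper carries it out by splitting the prox variable $u$ into a block $u^-$ spanned by $v_0,\ldots,v_{t-1}$ and a block $u^+$ spanned by $v_t,\ldots,v_k$, observing that for a query point in $\spn{v_0,\ldots,v_{t-1}}$ the prox objective \emph{separates} over these two blocks and the $u^+$ block is a sum of nonnegative terms vanishing at $u^+=0$, so the minimizer kills the $u^+$ part. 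Your intuition (``adding components along the new $v_r$'s would only increase both the quadratic penalty and the dormant terms'') points at the same conclusion, but the clean proof is via this separability rather than term-by-term monotonicity.

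One genuine local error: the projection argument you use to reduce to $\hat x$ in the span of queries does not establish $\inner{\hat x}{v_k}=0$. Projecting $\hat x$ onto the span of the queries together with $v_k$ leaves $\inner{\hat x}{v_k}$ unchanged; projecting onto the span of the queries together with $v_0,\ldots,v_{k-1}$ does zero that coordinate, but this projection can \emph{increase} $F$, since zeroing $\inner{x}{v_k}$ can make the term $\abs{\inner{x}{v_{k-1}}-\inner{x}{v_k}}$ larger; and the only projection guaranteed not to increase $F$ is onto $\spn{v_0,\ldots,v_k}$, which does not remove the $v_k$ coordinate. The correct fix, which is what the paper implicitly does when it says it ``simulates the deterministic behavior of the algorithm'' to construct the hard instance, is that the adversary finalizes the still-undetermined $v_r$ (in particular $v_k$) \emph{after} the algorithm halts, and can therefore pick them orthogonal to the output $\hat x$ in addition to all query points. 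With that adjustment your argument is sound and coincides with the paper's.
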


Furthermore, we can always reduce optimizing a function over $\norm{x}\leq B$
to optimizing a strongly convex function by adding the regularizer
{\small $\epsilon \norm{x}^2/(2 B^2)$} to each component, implying (see complete proof in Appendix \ref{appendix:DetLscLower}):

\begin{restatable}{theorem}{DetLscLower}\label{thm:DetLscLower}
For any $L,\lambda > 0$, any $0 < \epsilon < \frac{L^2}{288\lambda}$, any $m \geq 2$, and any deterministic algorithm $A$ with access to $h_F$, there exists a dimension $d=\mathcal{O}\left( \frac{mL}{\sqrt{\lambda\epsilon}} \right)$, and $m$ functions $f_i$ defined over $\mathcal{X} \subseteq \mathbb{R}^d$, which are $L$-Lipschitz continuous and $\lambda$-strongly convex, such that in order to find a point $\hat{x}$ for which $F(\hat{x}) - F(x^*) < \epsilon$, $A$ must make $\Omega\left( \frac{mL}{\sqrt{\lambda\epsilon}} \right)$ queries to $h_F$.
\end{restatable}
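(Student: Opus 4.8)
The plan is to derive Theorem~\ref{thm:DetLscLower} from Theorem~\ref{thm:DetLBLower} by the quadratic--regularization reduction hinted at above: adding $\tfrac{\lambda}{2}\norm{x}^2$ to the hard bounded--domain Lipschitz instance turns it into a $\lambda$-strongly convex instance, and any deterministic algorithm that optimizes the regularized average quickly would optimize the original one quickly, contradicting Theorem~\ref{thm:DetLBLower}.

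In detail, given $L,\lambda,\epsilon$ with $\epsilon < L^2/(288\lambda)$, I would set $B_0 := \sqrt{2\epsilon/\lambda}$ and $L_0 := L - \sqrt{2\epsilon\lambda}$, so that in this regime $L_0 = \Theta(L)$ and $2\epsilon < L_0 B_0/12$. Let $\set{f_i}$ be the hard instance furnished by Theorem~\ref{thm:DetLBLower} for Lipschitz constant $L_0$, radius $B_0$, and target accuracy $2\epsilon$, on $\mathcal{X} = \set{x:\norm{x}\le B_0}$ in dimension $d = \mathcal{O}(mL_0B_0/\epsilon) = \mathcal{O}(mL/\sqrt{\lambda\epsilon})$, and define $g_i(x) := f_i(x) + \tfrac{\lambda}{2}\norm{x}^2$, with $G := \tfrac{1}{m}\sum_i g_i = F + \tfrac{\lambda}{2}\norm{\cdot}^2$ and $F := \tfrac{1}{m}\sum_i f_i$. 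Each $g_i$ is then $\lambda$-strongly convex, and on $\mathcal{X}$ its gradient has norm at most $L_0 + \lambda B_0 = L$, so each $g_i$ is $L$-Lipschitz on $\mathcal{X}$.

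Two facts then complete the argument. First, the reduction is query--exact: a query $(x,i,\beta)$ to $h_F$ for $\set{g_i}$ is answered with a single query to $h_F$ for $\set{f_i}$ plus explicit arithmetic, since $g_i(x) = f_i(x)+\tfrac{\lambda}{2}\norm{x}^2$, $\nabla g_i(x) = \nabla f_i(x) + \lambda x$, and completing the square in \eqref{eq:prox} gives $\prox_{g_i}(x,\beta) = \prox_{f_i}\!\bigl(\tfrac{\beta}{\beta+\lambda}x,\ \beta+\lambda\bigr)$; hence a deterministic algorithm $A$ for the strongly convex problem induces a deterministic algorithm $\tilde A$ for the bounded Lipschitz problem making exactly the same number of oracle calls, and the adversary of Theorem~\ref{thm:DetLBLower} is run against $\tilde A$. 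Second, suboptimality transfers up to a factor of two: writing $x^*$ for the minimizer of $F$ over $\mathcal{X}$ and using $F(\hat x)\le G(\hat x)$, $\min_{\mathcal{X}}G \le G(x^*)$, and $\norm{x^*}\le B_0$, a point $\hat x$ with $G(\hat x) < \min_{\mathcal{X}}G + \epsilon$ satisfies $F(\hat x)-F(x^*) = F(\hat x) - G(x^*) + \tfrac{\lambda}{2}\norm{x^*}^2 \le G(\hat x) - \min_{\mathcal{X}}G + \tfrac{\lambda}{2}B_0^2 < 2\epsilon$. Thus $\tilde A$ finds a $2\epsilon$-suboptimal point for $\set{f_i}$, which by Theorem~\ref{thm:DetLBLower} costs $\Omega(mL_0B_0/\epsilon) = \Omega(mL/\sqrt{\lambda\epsilon})$ queries, and the same lower bound applies to $A$ on $\set{g_i}$.

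The conceptual content is light; the real work is the parameter bookkeeping --- choosing $B_0$ and $L_0$ so the $g_i$ come out exactly $L$-Lipschitz and $\lambda$-strongly convex, tracking the $\epsilon\mapsto 2\epsilon$ loss in accuracy, and pinning down the constant in the hypothesis $\epsilon < L^2/(288\lambda)$ that both keeps $L_0 = \Theta(L)$ and permits invoking Theorem~\ref{thm:DetLBLower} at accuracy $2\epsilon$ (with radius $B_0$ and Lipschitz constant $L_0$). The one step that is not pure routine is verifying the prox identity $\prox_{g_i}(x,\beta) = \prox_{f_i}(\tfrac{\beta}{\beta+\lambda}x,\ \beta+\lambda)$, which is precisely what makes the query count match exactly rather than only up to constants.
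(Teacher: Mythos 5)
Your proposal follows essentially the same route as the paper's own proof of Theorem~\ref{thm:DetLscLower} in Appendix~\ref{appendix:DetLscLower}: add the regularizer $\tfrac{\lambda}{2}\norm{x}^2$ to the non--strongly--convex hard instance from Theorem~\ref{thm:DetLBLower}, observe that every oracle call to the regularized component can be answered from one call to the original component by explicit arithmetic, lose a constant factor in accuracy from the $\tfrac{\lambda}{2}\norm{x^*}^2$ term, and invoke Theorem~\ref{thm:DetLBLower}; the paper phrases this as a contradiction while you run it forward, which is cosmetic. Two small differences worth noting: you deflate the source Lipschitz constant to $L_0 = L - \sqrt{2\epsilon\lambda}$ so the regularized components come out \emph{exactly} $L$-Lipschitz (the paper just regularizes an $L$-Lipschitz $\tilde f_i$ and ends up with $(L+\lambda B)$-Lipschitz components, which only matches the stated theorem up to a constant), and you make the prox identity $\prox_{g_i}(x,\beta)=\prox_{f_i}\bigl(\tfrac{\beta}{\beta+\lambda}x,\,\beta+\lambda\bigr)$ explicit where the paper leaves query-simulability implicit. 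The one loose end is the constant bookkeeping you flag yourself: with $B_0=\sqrt{2\epsilon/\lambda}$ and target $2\epsilon$ for Theorem~\ref{thm:DetLBLower}, the requirement $2\epsilon<L_0B_0/12$ forces roughly $\epsilon<L^2/(343\lambda)$ rather than the stated $\epsilon<L^2/(288\lambda)$; this is not a conceptual gap (the theorem's constant is not tight, and one can retune $B_0$), but the parameter choice as written does not literally cover the whole range in the theorem statement.
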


\subsection{Smooth Components}
When the components $f_i$ are required to be smooth, the lower
bound construction is similar to \eqref{eq:DetLipF}, except it is based on squared differences instead of
absolute differences. We consider the functions:
\begin{equation} \label{eq:DetSmoothF}
f_i(x) = \frac{1}{8} \left( \delta_{i,1}\left( \inner{x}{v_0}^2 -
    2a\inner{x}{v_0} \right) +  \delta_{i,k}\inner{x}{v_{k}}^2
     + \sum_{r=1}^{k} \delta_{i,r} \left(
    \inner{x}{v_{r-1}} - \inner{x}{v_{r}} \right)^2 \right) 
\end{equation}
where $\delta_{i,r}$ and $v_r$ are as before.  Again, we can
answer queries at round $t$ based only on $\delta_{i,r},v_r$ for
$r<t$.  This construction yields the following lower bounds (full
details in Appendix \ref{appendix:DetSBLower}):
\begin{restatable}{theorem}{DetSBLower}\label{thm:DetSBLower}
For any $\gamma,B,\epsilon > 0$, any $m \geq 2$, and any deterministic algorithm $A$ with access to $h_F$, there exists a sufficiently large dimension $d = \mathcal{O}\big( m\sqrt{\gamma B^2/\epsilon} \big)$, and $m$ functions $f_i$ defined over $\mathcal{X} = \set{x \in \mathbb{R}^d:\norm{x}\leq B}$, which are convex and $\gamma$-smooth, such that in order to find a point $\hat{x} \in \mathbb{R}^d$ for which $F(\hat{x}) - F(x^*) < \epsilon$, $A$ must make $\Omega\big( m\sqrt{\gamma B^2/\epsilon} \big)$ queries to $h_F$.
\end{restatable}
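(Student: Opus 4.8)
The plan is to instantiate the adversary-versus-algorithm game set up above with the quadratic ``chain'' construction \eqref{eq:DetSmoothF} (with the constant $\tfrac18$ replaced by a suitable multiple of $\gamma$ so that each $f_i$ is exactly $\gamma$-smooth), and to choose the chain length $k \asymp \sqrt{\gamma B^2/\epsilon}$, the linear-term scale $a \asymp B/\sqrt{k}$, and the ambient dimension $d \asymp mk = \mathcal{O}\big(m\sqrt{\gamma B^2/\epsilon}\big)$. First I would dispatch the routine facts: each $f_i$ is a nonnegative combination of squares of linear forms plus a linear term, hence convex; its Hessian is a fixed multiple of $\gamma$ times a sum of rank-one matrices indexed by the chain $v_0,\dots,v_k$ that is dominated by the $(2,-1)$-tridiagonal matrix (spectral norm $<4$), so $f_i$ is $\gamma$-smooth; and, writing $y_r=\inner{x}{v_r}$, one has $F(x)=\tfrac{\gamma\lfloor m/2\rfloor}{8m}\big(y_0^2-2ay_0+y_k^2+\sum_{r=1}^k(y_{r-1}-y_r)^2\big)$ since $\sum_i\delta_{i,r}=\lfloor m/2\rfloor$ for each $r$, whose minimizer is $x^*=\sum_r y_r^* v_r$ with $y_r^*=a\tfrac{k+1-r}{k+2}$ and $\norm{x^*}^2=\Theta(a^2k)\le B^2$ for the chosen $a$, so $x^*\in\mathcal{X}$.

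The first substantive step is to check that the adversary's truncation scheme is consistent: answering every query $(x,i,\beta)$ issued in round $t$ according to $f_i^t$ must be compatible with the eventual $f_i$. Any such $x$ is orthogonal to $v_t,\dots,v_k$ by construction, and component $i$ is queried in round $t$, so $\delta_{i,t}=0$; hence $f_i-f_i^t$ is a nonnegative sum of squared linear forms every term of which vanishes at $x$ together with its gradient — the only term not automatically killed by the orthogonality, the $r=t$ term, is killed by $\delta_{i,t}=0$ — so $f_i$ and $f_i^t$ agree in value and in gradient at $x$. For the prox oracle I would argue that $u^* := \prox_{f_i^t}(x,\beta)$, computed over $\mathcal{X}=\{u:\norm{u}\le B\}$, is again orthogonal to $v_t,\dots,v_k$: the coordinates of $u$ outside $\mathrm{span}\{v_0,\dots,v_{t-1}\}$ enter the proximal objective only through $\tfrac\beta2\norm{x-u}^2$ and $\norm{u}$, so at the optimum they are a nonnegative multiple of the corresponding coordinates of $x$, which are zero along $v_t,\dots,v_k$. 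Then $f_i-f_i^t\ge0$ vanishes at $u^*$, so $u^*$ also minimizes $f_i(u)+\tfrac\beta2\norm{x-u}^2$ over $\mathcal{X}$, i.e. $\prox_{f_i}(x,\beta)=u^*$. Consequently the adversary's responses are legitimate, and after the algorithm has completed at most $S$ rounds its output $\hat x$ is built only from information about $v_0,\dots,v_{S-1}$ (and directions orthogonal to the whole chain), so $\inner{\hat x}{v_r}=0$ for every $r>S$.

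The core of the argument is then a Nesterov-style estimate of the suboptimality in terms of the number of hidden chain coordinates. With $y_{S+1}=\dots=y_k=0$ the quadratic above collapses to the same quadratic with $k$ replaced by $S$, so $F(\hat x)\ge-\tfrac{\gamma\lfloor m/2\rfloor}{8m}a^2\tfrac{S+1}{S+2}$, and a short computation gives $F(\hat x)-F(x^*)\ge\tfrac{\gamma\lfloor m/2\rfloor}{8m}\,a^2\,\tfrac{k-S}{(k+2)(S+2)}$. This is the one place that requires care: taking $S=k-1$ (only $v_k$ hidden) yields only $\Theta(\gamma a^2/k^2)=\Theta(\gamma B^2/k^3)$, which would force the weaker $k\asymp(\gamma B^2/\epsilon)^{1/3}$; instead one keeps a constant fraction of the coordinates hidden, say $S\le k/6$, so the gap is $\Theta(\gamma a^2/k)=\Theta(\gamma B^2/k^2)$, and the choice $k\asymp\sqrt{\gamma B^2/\epsilon}$ makes it at least $\epsilon$. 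Thus any algorithm attaining $F(\hat x)-F(x^*)<\epsilon$ must complete more than $k/6$ rounds, i.e. make at least $\tfrac{k}{6}\lceil m/2\rceil=\Omega(mk)=\Omega\big(m\sqrt{\gamma B^2/\epsilon}\big)$ oracle calls.

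Finally one closes the loop on dimension: fix $d=\Theta(mk)$ in advance; if the algorithm ever issues more than $d/2$ queries it has already made $\Omega(mk)$ of them and we are done, and otherwise at least $k+1$ coordinates remain to carry the orthonormal chain $v_0,\dots,v_k$, so the construction goes through and the previous paragraph applies. I expect the main obstacles to be exactly the two points flagged above: verifying that the \emph{prox} oracle over the bounded domain can be answered consistently from the truncated functions, and getting the $1/S$-versus-$1/S^2$ bookkeeping in the suboptimality bound right — recognizing that the $\sqrt{\gamma B^2/\epsilon}$ rate (rather than a cube root) comes from hiding $\Theta(k)$ coordinates at once, not merely the last one.
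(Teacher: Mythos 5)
Your proposal reproduces the paper's construction and argument essentially point‑for‑point: the same chain quadratic \eqref{eq:DetSmoothF} with indicators $\delta_{i,r}$, the same consistency‑by‑truncation idea, the same suboptimality identity $F(\hat{x})-F(x^*)\ge\frac{\gamma\lfloor m/2\rfloor}{8m}a^2\frac{k-S}{(S+2)(k+2)}$, and — most importantly — the same key observation that one must keep a \emph{constant fraction} of the chain hidden (the paper picks $q=\lfloor k/2\rfloor$ where you pick $S\le k/6$, both giving $\Theta(\gamma a^2/k)=\Theta(\gamma B^2/k^2)$) rather than just the final coordinate, since hiding only $v_k$ would yield a $(\gamma B^2/\epsilon)^{1/3}$ rate. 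Your treatment of the prox oracle is in fact slightly more careful than the paper's Lemma corresponding to \eqref{eq:DetLBProx}: you explicitly account for the ball constraint $\norm{u}\le B$ by noting the orthogonal component of the minimizer is a nonnegative multiple of $x^\perp$, whereas the paper's displayed derivation drops the constraint; both arguments land in the same place, but yours closes a small gap.

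The one thing you omit is the large‑$\epsilon$ regime. The theorem is stated for \emph{all} $\epsilon>0$, and once $\epsilon\gtrsim\gamma B^2$ your chain length $k\asymp\sqrt{\gamma B^2/\epsilon}$ drops below a constant and the construction degenerates. The paper handles this with an entirely separate (and trivial) construction: $m$ functions that are each either $0$ or a linear form $2m\epsilon\inner{x}{v}$ with $v$ orthogonal to the first $m-1$ queries, forcing $\Omega(m)$ queries, and then observing $m\ge m\sqrt{\gamma B^2/(128\epsilon)}$ in this regime so the stated bound still holds. You should add this case (or explicitly restrict to $\epsilon<\gamma B^2/128$ in your write‑up and note that the complementary range reduces to an $\Omega(m)$ bound by a simpler argument).
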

In the strongly convex case, we use a very similar construction, adding the term $\lambda\norm{x}^2/2$, which gives the following bound (see Appendix \ref{appendix:DetSscLower}):
\begin{restatable}{theorem}{DetSscLower}\label{thm:DetSscLower}
For any $\gamma,\lambda > 0$ such that $\frac{\gamma}{\lambda} > 73$, any $\epsilon > 0$, any $\epsilon_0 > \frac{3\gamma\epsilon}{\lambda}$, any $m \geq 2$, and any deterministic algorithm $A$ with access to $h_F$, there exists a sufficiently large dimension $d = \mathcal{O}\left( m\sqrt{\frac{\gamma}{\lambda}}\log\left(\frac{\lambda\epsilon_0}{\gamma\epsilon}\right) \right)$, and $m$ functions $f_i$ defined over $\mathcal{X} \subseteq \mathbb{R}^d$, which are $\gamma$-smooth and $\lambda$-strongly convex and where $F(0) - F(x^*) = \epsilon_0$, such that in order to find a point $\hat{x}$ for which $F(\hat{x}) - F(x^*) < \epsilon$, $A$ must make 
$\Omega\left( m\sqrt{\frac{\gamma}{\lambda}}\log\left(\frac{\lambda\epsilon_0}{\gamma\epsilon}\right) \right)$ queries to $h_F$.
\end{restatable}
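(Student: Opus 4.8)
The plan is to port the deterministic smooth construction behind Theorem~\ref{thm:DetSBLower} into the strongly convex regime by adding an explicit quadratic regularizer, and to calibrate the resulting quadratic so that, at the finite-sum level, it reproduces the classical $\Omega\!\big(\sqrt{\gamma/\lambda}\log(1/\epsilon)\big)$ lower bound for a single smooth strongly convex function, amplified by $m$ through the round machinery. Concretely, I would use components
\begin{equation*}
f_i(x) = \frac{\mu}{8}\left( \delta_{i,1}\big(\inner{x}{v_0}^2 - 2a\inner{x}{v_0}\big) + \delta_{i,k}\inner{x}{v_k}^2 + \sum_{r=1}^{k} \delta_{i,r}\big(\inner{x}{v_{r-1}} - \inner{x}{v_r}\big)^2\right) + \frac{\lambda}{2}\norm{x}^2,
\end{equation*}
with $v_r$, $\delta_{i,r}$, and the notion of a ``round'' exactly as in the construction~\eqref{eq:DetSmoothF}, and with $\mu := \gamma-\lambda$ so that each $f_i$ is $\gamma$-smooth and $\lambda$-strongly convex; the hypothesis $\gamma/\lambda>73$ guarantees both $\mu=\Theta(\gamma)$ and that the contraction factor appearing below is bounded away from $1$. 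The chain depth $k$ and the scalar $a$ are free parameters, fixed at the end so that $F(0)-F(x^*)=\epsilon_0$ and so that the unavoidable suboptimality after $k-1$ rounds equals $\Theta(\epsilon)$.

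The adversary argument is structurally the same as in the convex smooth case: in round $t$ the oracle answers by the truncation $f_i^t$, which depends only on $v_r,\delta_{i,r}$ with $r<t$; when round $t$ ends, $\delta_{i,t}$ is determined and $v_t$ is chosen orthogonal to $\set{v_0,\dots,v_{t-1}}$ and to every point queried so far, so that every iterate produced before round $r$ is orthogonal to $v_r$, and one verifies exactly as for Theorem~\ref{thm:DetSBLower} that these answers are consistent with the full $f_i$. The only genuinely new verification is that the added $\tfrac{\lambda}{2}\norm{\cdot}^2$ term does not spoil consistency of the \emph{prox} answers; since $\prox_{f_i}(x,\beta)$ equals, up to rescaling $x$ by $\beta/(\beta+\lambda)$ and $\beta$ by $\beta+\lambda$, a prox of the chain part alone at a point that is still orthogonal to the unrevealed directions, this reduces to the computation already done in Theorem~\ref{thm:DetSBLower}.

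Next I would analyze $F=\tfrac1m\sum_i f_i$ as a quadratic. On the span of $\set{v_r}$ its Hessian is $\tfrac{\mu}{4}$ times a constant-scaled path Laplacian plus $\lambda I$, with spectrum contained in $[\lambda,\gamma]$ up to constants, so $F$ behaves like Nesterov's worst-case quadratic with condition number $\kappa=\Theta(\gamma/\lambda)$ and its minimizer $x^*=\sum_{r=0}^k x^*_r v_r$ has geometrically decaying coordinates $\abs{x^*_r}=\Theta(a\,q^{\,r})$, $q=\tfrac{\sqrt\kappa-1}{\sqrt\kappa+1}$. Any point the algorithm can produce before completing round $k$ is orthogonal to $v_k$, so its suboptimality is at least $\tfrac{\lambda}{2}(x^*_k)^2=\Theta(\lambda a^2 q^{2k})$ by strong convexity. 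Enforcing $F(0)-F(x^*)=\epsilon_0$ pins $a^2=\Theta(\epsilon_0/\gamma)$ (the initial gap scales with the smoothness $\mu\asymp\gamma$ through the linear term $-2a\inner{x}{v_0}$), so the suboptimality lower bound becomes $\Theta(\lambda\epsilon_0 q^{2k}/\gamma)$; choosing $k=\Theta\!\big(\sqrt{\gamma/\lambda}\,\log(\lambda\epsilon_0/(\gamma\epsilon))\big)$ makes this exactly $\Theta(\epsilon)$, and the hypothesis $\epsilon_0>3\gamma\epsilon/\lambda$ ensures the logarithm's argument exceeds an absolute constant so that $k\ge1$. (The factor $\lambda/\gamma$ inside the logarithm is precisely the mismatch between the initial gap, which scales with $\gamma$, and the cost of missing $v_k$, which scales with $\lambda$.)

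Putting the pieces together: to reach suboptimality below $\epsilon$ the algorithm must complete at least $k$ rounds, and each round contains at least $\lceil m/2\rceil$ oracle queries by definition, giving $\Omega\!\big(m\sqrt{\gamma/\lambda}\,\log(\lambda\epsilon_0/(\gamma\epsilon))\big)$ queries; since the construction consumes roughly one fresh orthonormal direction per query, a dimension $d=\Theta(mk)=\mathcal O\!\big(m\sqrt{\gamma/\lambda}\,\log(\lambda\epsilon_0/(\gamma\epsilon))\big)$ suffices, as claimed. I expect the main obstacle to be the quantitative calibration step — computing the decay rate of $x^*$, lower-bounding the tail suboptimality, and simultaneously enforcing $F(0)-F(x^*)=\epsilon_0$ so that exactly the factor $\lambda\epsilon_0/(\gamma\epsilon)$ lands inside the logarithm — rather than the prox-consistency check, which is a routine reduction to Theorem~\ref{thm:DetSBLower}.
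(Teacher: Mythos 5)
Your approach is essentially the same as the paper's: add a $\tfrac{\lambda}{2}\norm{x}^2$ regularizer to the deterministic smooth construction of Theorem~\ref{thm:DetSBLower}, answer queries by round truncations, verify prox/gradient consistency, identify the path-Laplacian-plus-$\lambda I$ quadratic whose minimizer decays geometrically with rate $q=\tfrac{\sqrt{\kappa}-1}{\sqrt{\kappa}+1}$, and convert the geometric decay into a round lower bound of $\Theta\bigl(\sqrt{\gamma/\lambda}\log(\lambda\epsilon_0/(\gamma\epsilon))\bigr)$, hence an oracle lower bound with the extra $\Theta(m)$ factor per round.

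One concrete difference worth flagging. The paper does not leave the end-of-chain term with coefficient $1$: it uses $\delta_{i,k}\,\zeta\,\inner{x}{v_k}^2$ with $\zeta$ tuned to $1-q$, which makes the minimizer \emph{exactly} geometric, $x^*=C\sum_{r=0}^{k}q^{r+1}v_r$. With your untuned boundary the solution of the tridiagonal system is instead a mixture $c_1 q^{r}+c_2 q^{-r}$; since the general decaying combination does not literally read off as $\Theta(a\,q^r)$, you cannot simply assert the decay rate as you do. In fact the mixture is tame — imposing the boundary at $r=k$ forces $c_2\approx -c_1 q^{2k+4}$, so $x^*_r=\Theta(c_1 q^{r+1})$ for all $r\le k$ — but this is precisely the quantitative work you flag as "the main obstacle," and the paper sidesteps it entirely by choosing $\zeta$. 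Either route is fine; yours would need one extra lemma pinning down the mixture coefficients, while the paper's lets it verify the closed form by direct substitution. (Also, the paper's final inequality uses the full tail $\tfrac{\lambda}{2}\sum_{r\ge t}(x^*_r)^2$ rather than the single last coordinate $\tfrac{\lambda}{2}(x^*_k)^2$; both work since the tail is a geometric series, but using the partial sum is what lets the paper state the round bound for a generic $t$ with $k$ only slightly larger, rather than tying everything to $r=k$.)
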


\removed{ The four lower bounds presented in this section match the upper bounds
presented in sections \ref{sec:SmoothUpper} and \ref{sec:LipUppers} up to constant factors, and together show a
complete picture of the oracle complexity of problem \eqref{eq:main}
 for deterministic algorithms with access to $h_F$. }

%%%%%%%%%%%%%%%%%%%%%%%%%%%%%%%%%%%%%%%%%%%%%
%%%%%%%%%%			  RAND LOWER BOUNDS    	           %%%%%%%%%%
%%%%%%%%%%%%%%%%%%%%%%%%%%%%%%%%%%%%%%%%%%%%%

\section{Lower Bounds for Randomized Algorithms}

We now turn to randomized algorithms for 
\eqref{eq:main}.  In the deterministic constructions, we
relied on being able to set $v_r$ and $\delta_{i,r}$ based on the
predictable behavior of the algorithm.  This is impossible for randomized algorithms,
 we must choose the ``hard'' function before we know the
random choices the algorithm will make---so the function
must be ``hard" more generally than before.

% Previously, we chose vectors $v_r$ orthogonal to all previous
% queries made by the algorithm. For randomized algorithms this cannot
% be ensured; however, by choosing orthonormal vectors $v_r$ randomly in a
% high dimensional space, with high probability they 
% will be {\em nearly} orthogonal to queries.
% \begin{restatable}{lemma}{NearlyOrthogonal} \label{lem:orthogonal}
% For any $\delta,c,B > 0$ and dimension $d \geq \frac{2B^2}{c^2}\log\frac{1}{\delta}$, for any $x \in \mathbb{R}^d$ with $\norm{x} \leq B$ and random vector $v$ uniformly distributed on the unit sphere, $\mathbb{P}(\abs{\inner{x}{v}} \geq c) < \delta$.
% \end{restatable}
% \removed{(See Appendix \ref{appendix:NearlyOrthogonal} for proof).} 
% After replacing the absolute or squared difference in the
% constructions with $c$-insensitive differences, near orthogonality
% is sufficient and this issue does not significantly affect the
% lower bounds.
Previously, we chose vectors $v_r$ orthogonal to all previous
queries made by the algorithm. For randomized algorithms this cannot be ensured.
However, if we choose orthonormal vectors $v_r$ randomly in a
high dimensional space, they will be \emph{nearly} orthogonal to queries
with high probability. Slightly modifying the absolute or squared 
difference from before makes near orthogonality
sufficient. This issue increases the required dimension but does
not otherwise affect the lower bounds.

More problematic is our inability to anticipate the order in which
the algorithm will query the components, precluding the use of
$\delta_{i,r}$.  In the deterministic setting, if a term revealing a new $v_r$ appeared
 in half of the components, we could ensure that the algorithm must make $m/2$
queries to find it. However, a randomized algorithm could find it in two
queries in expectation, which would eliminate the linear dependence on $m$ in
the lower bound!  Alternatively, if only one component included
the term, a randomized algorithm would indeed need $\Omega(m)$
queries to find it, but that term's effect on suboptimality of $F$ would
be scaled down by $m$, again eliminating the dependence on $m$.

To establish a $\Omega(\sqrt{m})$ lower bound for randomized
algorithms we must take a new approach.  We define
$\left\lfloor\frac{m}{2}\right\rfloor$ pairs of functions which
operate on $\left\lfloor\frac{m}{2}\right\rfloor$ orthogonal subspaces
of $\mathbb{R}^d$. Each pair of functions resembles the constructions
from the previous section, but since there are many of them, the
algorithm must solve $\Omega(m)$ separate optimization problems in
order to optimize $F$.

\subsection{Lipschitz Continuous Components}
First consider the non-smooth, non-strongly-convex setting and assume
for simplicity $m$ is even (otherwise we simply let the last function be
zero).  We define the helper function $\psi_c$, which replaces the
absolute value operation and makes our construction resistant to
small inner products between iterates and not-yet-discovered
components:
\begin{equation}\label{main:defpsi}
\psi_c(z) = \max\left(0, \abs{z} - c\right)
\end{equation}
Next, we define $m/2$ pairs of functions, indexed by $i=1..m/2$:
\begin{align}\label{main:lnsc}
  f_{i,1}(x) &= \frac{1}{\sqrt{2}}\abs{b - \inner{x}{v_{i,0}}} + \frac{1}{2\sqrt{k}} \sum_{r\text{ even}}^k \psi_c\left(\inner{x}{v_{i,r-1}} - \inner{x}{v_{i,r}}\right) 
  \end{align}\begin{align*}
  f_{i,2}(x) &= \frac{1}{2\sqrt{k}} \sum_{r\text{ odd}}^k
  \psi_c\left(\inner{x}{v_{i,r-1}} - \inner{x}{v_{i,r}}\right)
\end{align*}
where $\set{v_{i,r}}_{r=0..k,i=1..m/2}$ are random orthonormal
vectors and $k=\Theta(\tfrac{1}{\epsilon\sqrt{m}})$.  With $c$ sufficiently small
and the dimensionality sufficiently high, with high probability the
algorithm only learns the identity of new vectors $v_{i,r}$ by 
alternately querying $f_{i,1}$ and $f_{i,2}$; so
revealing all $k+1$ vectors requires at least $k+1$ total queries.
Until $v_{i,k}$ is revealed, an iterate is
$\Omega(\epsilon)$-suboptimal on $(f_{i,1}+f_{i,2})/2$. From here, we show that an
$\epsilon$-suboptimal solution to $F(x)$ can be found only after at least $k+1$ 
queries are made to at least $m/4$ pairs, for a
total of $\Omega(mk)$ queries. This time, since the optimum $x^*$ will
need to have inner product $b$ with $\Theta(mk)$ vectors $v_{i,r}$, we
need to have
$b=\Theta(\tfrac{1}{\sqrt{mk}})=\Theta(\sqrt{\epsilon/\sqrt{m}})$, and the total number of queries is
$\Omega(mk)=\Omega(\tfrac{\sqrt{m}}{\epsilon})$.  The $\Omega(m)$ term of the lower bound
 follows trivially since we require $\epsilon = \mathcal{O}(1/\sqrt{m})$, (proofs in Appendix \ref{appendix:RandLBLower}):

% where $\set{v_{i,r}}_{r=0..k,i=1..m/2}$ are random orthonormal
% vectors and $b=\Theta(\epsilon\sqrt{k})$.  With $c$ sufficiently small
% and the dimensionality sufficiently high, with high probability the
% algorithm only learns the identity of new vectors $v_{i,r}$ by 
% alternately querying $f_{i,1}$ and $f_{i,2}$; so
% revealing all $k+1$ vectors requires at least $k+1$ total queries.
% Until $v_{i,k}$ is revealed, an iterate is
% $\Omega(\epsilon)$-suboptimal on $(f_{i,1}+f_{i,2})/2$. From here, we show that an
% $\epsilon$-suboptimal solution to $F(x)$ can be found only after at least $k+1$ 
% queries are made to at least $m/4$ pairs, for a
% total of $\Omega(mk)$ queries. This time, since the optimum $x^*$ will
% need to have inner product $b$ with $\Theta(mk)$ vectors $v_{i,r}$, we
% need to have
% $k=\Theta(\tfrac{1}{b^2m})=\Theta(\tfrac{1}{\epsilon\sqrt{m}})$ for a
% total of $\Omega(mk)=\Omega(\tfrac{\sqrt{m}}{\epsilon})$ queries.  The $\Omega(m)$ term of the lower bound
%  follows trivially since we require $\epsilon = \mathcal{O}(1/\sqrt{m})$, (proofs in Appendix \ref{appendix:RandLBLower}):

% \parens{\parens{\frac{mL^2B^2}{\epsilon^2} + \frac{L^3B^3}{\epsilon^3\sqrt{m}}}\log\parens{L^2B^2\frac{m}{\epsilon^2}}}

\begin{restatable}{theorem}{RandLBLower}\label{thm:RandLBLower}
For any $L,B > 0$, any $0 < \epsilon < \frac{LB}{10\sqrt{m}}$, any $m \geq 2$, and any randomized algorithm $A$ with access to $h_F$, there exists a dimension $d = \mathcal{O}\parens{\frac{L^3B^3}{\epsilon^3\sqrt{m}}\log\parens{\frac{L^2B^2m}{\epsilon^2}}}$, and $m$ functions $f_i$ defined over $\mathcal{X} = \set{x \in \mathbb{R}^d: \norm{x}\leq B}$, which are convex and $L$-Lipschitz continuous, such that to find a point $\hat{x}$ for which $\mathbb{E}\left[F(\hat{x}) - F(x^*)\right] < \epsilon$, $A$ must make $\Omega\big( m + \frac{\sqrt{m}LB}{\epsilon} \big)$ queries to $h_F$.
\end{restatable}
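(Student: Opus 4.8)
The plan is to formalize the construction sketched just before the theorem statement. First I would fix the dimension $d$ large enough that a collection of random orthonormal vectors $\{v_{i,r}\}$, drawn uniformly (e.g.\ as the first $k+1$ columns of a random orthogonal matrix in each of the $m/2$ mutually orthogonal coordinate blocks), is with high probability \emph{nearly} orthogonal to every query point the algorithm produces. The quantitative statement I want is: if $c$ is a small constant multiple of $\epsilon$ (say $c \asymp \epsilon/\sqrt{k}$ up to logs) and $d$ is polynomially large in $LB/\epsilon$ with an extra $\log(LB/\epsilon)$ factor, then along the whole (random) run of the algorithm, any query point $x$ with $\norm{x}\le B$ has $|\inner{x}{v_{i,r}}| \le c$ for every $v_{i,r}$ that has not yet been ``revealed.'' The notion of ``revealed'' is defined recursively with respect to the pair $(f_{i,1},f_{i,2})$: $v_{i,0}$ is revealed by the first query to $f_{i,1}$; and $v_{i,r}$ becomes revealed only once the algorithm queries the function in the pair containing the term $\psi_c(\inner{x}{v_{i,r-1}}-\inner{x}{v_{i,r}})$ (i.e.\ $f_{i,1}$ if $r$ is even, $f_{i,2}$ if $r$ is odd) \emph{at a point where $v_{i,r-1}$ is already revealed}. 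The key lemma is then that, conditioned on the revealed vectors, the oracle responses $h_F(x,i,j,\beta)$ — value, (sub)gradient, and prox — depend only on the revealed vectors: because $\psi_c$ is flat (zero and with zero subgradient) on $[-c,c]$, the unrevealed terms contribute nothing to $f_{i,j}$, its subdifferential, or its prox map at any queried point, so the adversary can answer consistently with $f_i$ without committing to the unrevealed $v_{i,r}$.

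Next I would prove the per-pair progress bound: because $v_{i,r}$ is revealed only by a query to the ``other'' function of the pair after $v_{i,r-1}$ is revealed, the algorithm needs at least $k+1$ queries distributed across the pair $(i,1),(i,2)$ to reveal all of $v_{i,0},\dots,v_{i,k}$, and until $v_{i,k}$ is revealed any feasible point $x$ is $\Omega(1/\sqrt{k})$-suboptimal for $(f_{i,1}+f_{i,2})/2$. This follows by exhibiting the pair-minimizer at $x^*_i = b\sum_{r=0}^k v_{i,r}$ with value $0$ (after checking $\norm{x^*_i}\le $ the allotted norm budget for block $i$, using $b=\Theta(1/\sqrt{mk})$), and lower bounding $(f_{i,1}(x)+f_{i,2}(x))/2$ for any $x$ orthogonal (up to $c$) to the unrevealed $v_{i,k}$: the $|b-\inner{x}{v_{i,0}}|$ term plus the telescoping chain of $\psi_c$ terms forces $\inner{x}{v_{i,0}}\approx b,\ \inner{x}{v_{i,1}}\approx b,\dots$, but $\inner{x}{v_{i,k}}$ is pinned near $0$, so one of the summands is $\Omega(b/\sqrt{k})=\Omega(\epsilon)$, matching the calculation $k=\Theta(1/(\epsilon\sqrt m))$. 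Then I would aggregate over pairs: $F = \frac{2}{m}\sum_i (f_{i,1}+f_{i,2})/2$, so $F(x)-F(x^*) = \frac{2}{m}\sum_i [(f_{i,1}(x)+f_{i,2}(x))/2]$; if the algorithm has made fewer than $k+1$ queries to some pair $i$, that pair contributes $\Omega(\epsilon)\cdot\frac{2}{m}$ to the suboptimality, so to reach expected suboptimality $<\epsilon$ the algorithm must (in expectation) have ``finished'' all but a small constant fraction — say at most $m/4$ — of the pairs, each costing $\ge k+1$ queries; a Markov/averaging argument over the algorithm's randomness turns this into $\Omega(mk)=\Omega(\sqrt m/\epsilon)$ expected queries. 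The additive $\Omega(m)$ term is free because the hypothesis $\epsilon<LB/(10\sqrt m)$ forces $mk=\Omega(m)$; finally undo the $L=B=1$ normalization by scaling $f_i\mapsto LB f_i(\cdot/B)$.

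I expect the main obstacle to be the near-orthogonality / ``robustness to small inner products'' argument combined with handling the \emph{prox} oracle, since the prox map is a global operation: I must show that $\prox_{f_{i,j}}(x,\beta)$, for any query $(x,\beta)$, still does not leak any unrevealed $v_{i,r}$. The point is that the minimizer $u$ of $f_{i,j}(u)+\frac{\beta}{2}\norm{x-u}^2$ has no incentive to move in an unrevealed direction $v_{i,r}$ — moving along $v_{i,r}$ increases the quadratic penalty (since $\inner{x}{v_{i,r}}\approx 0$ and the block structure decouples directions) while the only $\psi_c$ term involving $v_{i,r}$ stays in its flat region for small displacements — so $\inner{u}{v_{i,r}}=\inner{x}{v_{i,r}}$ at the optimum, and the prox answer is computable from the revealed data alone. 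Making this rigorous requires a careful but routine KKT/first-order-optimality argument together with a union bound over the (at most polynomially many, as a function of the query budget) queries, and choosing $c$ small enough — of order $\epsilon$ times the right power of $k$ — so that all the $\psi_c$ flatness regions are respected throughout; this is exactly where the stated $d = \mathcal{O}(L^4B^6\epsilon^{-4}\log(LB/\epsilon))$ dimension bound comes from.
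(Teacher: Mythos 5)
Your proposal matches the paper's proof in all essential respects: the same $\psi_c$-based pairwise construction partitioned across $m/2$ (near-)orthogonal blocks, the same high-probability near-orthogonality argument in high dimension to conclude that oracle responses (including the prox, via a separability/first-order argument over revealed vs.\ unrevealed directions) leak at most one new $v_{i,r}$ per query, the same per-pair $\Omega(\epsilon)$-suboptimality bound via the telescoping chain and the flatness of $\psi_c$ on $[-c,c]$, and the same aggregation requiring $\Omega(mk)$ total queries with $k=\Theta(1/(\epsilon\sqrt m))$ and $b=\Theta(1/\sqrt{mk})$. The only cosmetic difference is your ``revealed'' bookkeeping, which tracks actual alternating progress within a pair, versus the paper's simpler upper bound that treats the query count $t^{(n)}$ to pair $i$ as the number of potentially revealed vectors (Property~1 and Lemma~4 in the appendix); both yield the stated lower bound.
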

An added regularizer 
gives the result for strongly convex functions (see Appendix \ref{appendix:RandLscLower}):
\begin{restatable}{theorem}{RandLscLower}\label{thm:RandLscLower}
For any $L,\lambda > 0$, any $0 < \epsilon < \frac{L^2}{200\lambda m}$, any $m \geq 2$, and any randomized algorithm $A$ with access to $h_F$, there exists a dimension $d = \mathcal{O}\left( \frac{L^3}{\sqrt{\lambda^3\epsilon^3 m}}\log \frac{L^2m}{\lambda\epsilon} \right)$, and $m$ functions $f_i$ defined over $\mathcal{X} \subseteq \mathbb{R}^d$, which are $L$-Lipschitz continuous and $\lambda$-strongly convex, such that in order to find a point $\hat{x}$ for which $\mathbb{E}\left[F(\hat{x}) - F(x^*)\right] < \epsilon$, $A$ must make $\Omega\big( m +  \frac{\sqrt{m}L}{\sqrt{\lambda\epsilon}} \big)$ queries to $h_F$.
\end{restatable}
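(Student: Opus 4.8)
The plan is to derive Theorem~\ref{thm:RandLscLower} as a black-box consequence of Theorem~\ref{thm:RandLBLower} by the usual regularization trick, where the only point requiring attention is that the reduction preserves the number of oracle calls \emph{exactly}, including prox calls. Given the target accuracy $\epsilon$, I would invoke Theorem~\ref{thm:RandLBLower} with Lipschitz parameter $\ell$ equal to a suitable constant fraction of $L$ and with ball radius $B := \sqrt{2\epsilon/\lambda}$, obtaining convex, $\ell$-Lipschitz ``hard'' functions $f_1,\dots,f_m$ on $\set{x:\norm{x}\le B}$ together with the bound that $\Omega\!\big(m + \tfrac{\sqrt m\,\ell B}{\epsilon'}\big)$ oracle calls are needed to reach $\E{F(\hat x)-F(x^*)}<\epsilon'$, which I apply with $\epsilon' := 2\epsilon$. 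The hypothesis $\epsilon<\tfrac{L^2}{200\lambda m}$ is exactly the condition that guarantees the precondition $\epsilon'<\tfrac{\ell B}{10\sqrt m}$ of Theorem~\ref{thm:RandLBLower} after substituting $B=\sqrt{2\epsilon/\lambda}$ and $\epsilon'=2\epsilon$ (this is where that specific constant comes from).

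Next I would set $g_i(x):=f_i(x)+\tfrac{\lambda}{2}\norm{x}^2$ and $G:=\tfrac1m\sum_i g_i = F+\tfrac\lambda2\norm{\cdot}^2$ on the same domain $\mathcal{X}=\set{x:\norm{x}\le B}$. Each $g_i$ is $\lambda$-strongly convex by construction, and on $\mathcal{X}$ it is $(\ell+\lambda B)$-Lipschitz; the choice $B=\sqrt{2\epsilon/\lambda}$ gives $\lambda B=\sqrt{2\lambda\epsilon}$, which the bound on $\epsilon$ keeps small relative to $L$, so the $g_i$ are $L$-Lipschitz as required. The crucial observation is that a single query to $h_G$ can be answered using a single query to $h_F$: values and (sub)gradients transform by $g_i(x)=f_i(x)+\tfrac\lambda2\norm{x}^2$ and $\nabla g_i(x)=\nabla f_i(x)+\lambda x$, while completing the square in the prox definition \eqref{eq:prox} gives
\begin{equation*}
\prox_{g_i}(x,\beta)=\argmin_{u\in\mathcal{X}}\set{f_i(u)+\tfrac{\lambda+\beta}{2}\,\norm{u-\tfrac{\beta}{\lambda+\beta}x}^2}=\prox_{f_i}\!\Big(\tfrac{\beta}{\lambda+\beta}\,x,\ \lambda+\beta\Big).
\end{equation*}
Hence any randomized algorithm solving $G$ with $T$ accesses to $h_G$ induces, by applying these deterministic post-processing rules to the responses, a randomized algorithm solving $F$ with at most $T$ accesses to $h_F$.

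To finish, I would transfer suboptimality. Since $\tfrac\lambda2\norm{x}^2\ge 0$ everywhere, $G^*\ge F^*$; and since $x^*\in\mathcal{X}$, $G^*\le G(x^*)\le F^*+\tfrac{\lambda B^2}{2}=F^*+\epsilon$. Thus for any $\hat x\in\mathcal{X}$, $F(\hat x)\le G(\hat x)= G^*+\big(G(\hat x)-G^*\big)\le F^*+\epsilon+\big(G(\hat x)-G^*\big)$, so $\E{G(\hat x)-G^*}<\epsilon$ forces $\E{F(\hat x)-F^*}<2\epsilon=\epsilon'$. Combined with the oracle-preserving simulation, optimizing $G$ to expected accuracy $\epsilon$ requires $\Omega\!\big(m + \tfrac{\sqrt m\,\ell B}{\epsilon'}\big)=\Omega\!\big(m + \tfrac{\sqrt m\,L}{\sqrt{\lambda\epsilon}}\big)$ queries, and substituting $B=\sqrt{2\epsilon/\lambda}$, $\epsilon'=2\epsilon$ into the dimension bound $d=\mathcal{O}\!\big(\ell^4B^6\epsilon'^{-4}\log(\ell B/\epsilon')\big)$ of Theorem~\ref{thm:RandLBLower} yields $d=\mathcal{O}\!\big(\tfrac{L^4}{\lambda^3\epsilon}\log\tfrac{L}{\sqrt{\lambda\epsilon}}\big)$, matching the statement.

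\textbf{Main obstacle.} The only genuinely non-mechanical step is verifying that the reduction is oracle-preserving \emph{through the prox oracle} — i.e., the completing-the-square identity above, and the fact that the simulating algorithm is still a legitimate randomized algorithm because it applies only fixed, deterministic transformations to oracle answers. Everything else is constant-chasing: checking that the choices of $\ell$, $B=\sqrt{2\epsilon/\lambda}$, and $\epsilon'=2\epsilon$ simultaneously make each $g_i$ be $L$-Lipschitz and $\lambda$-strongly convex and satisfy the precondition of Theorem~\ref{thm:RandLBLower}, which pins down the constant in $\epsilon<\tfrac{L^2}{200\lambda m}$.
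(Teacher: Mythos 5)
Your proposal is correct and follows essentially the same route as the paper: a reduction to Theorem~\ref{thm:RandLBLower} by adding the regularizer $\tfrac{\lambda}{2}\norm{x}^2$ with $\lambda B^2 \approx \epsilon$, the only difference being that the paper phrases it as a proof by contradiction (``a fast algorithm for the strongly convex $F$ would yield a fast algorithm for $\tilde F$'') while you construct the strongly convex hard instance directly from the Lipschitz one. Your completing-the-square identity $\prox_{g_i}(x,\beta)=\prox_{f_i}\!\big(\tfrac{\beta}{\lambda+\beta}x,\lambda+\beta\big)$ makes explicit a step the paper's proof leaves implicit (that the $h_G$ oracle is simulatable from $h_F$ query-for-query up to a constant factor), and is a genuine, if minor, clarification.
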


The large dimension required by these lower bounds is the cost of omitting the assumption that the algorithm's queries lie in the span of previous oracle responses. If we do assume that the queries lie in that span, the necessary dimension is only on the order of the number of oracle queries needed.

When $\epsilon = \Omega\left(LB/\sqrt{m}\right)$ in the non-strongly convex case or $\epsilon=\Omega\left( L^2/(\lambda m) \right)$ in the strongly convex case, the lower bounds for randomized algorithms presented above do not apply. Instead, we can obtain a lower bound based on an information theoretic argument. We first uniformly randomly choose a parameter $p$, which is either $(1/2 -2\epsilon)$ or $(1/2 + 2\epsilon)$.
Then for $i = 1,...,m$, in the non-strongly convex case we make $f_i(x) = x$ with probability $p$ and $f_i(x) = -x$ with probability $1-p$.
Optimizing $F(x)$ to within $\epsilon$ accuracy then implies recovering the bias of the Bernoulli random variable, which requires $\Omega(1/\epsilon^2)$ queries based on a standard information theoretic result \citep{Agarwal09,LeCam}. Setting $f_i(x) = \pm x +\frac{\lambda}{2}\norm{x}^2$ gives a $\Omega(1/(\lambda\epsilon))$ lower bound in the $\lambda$-strongly convex setting. This is formalized in Appendix \ref{appendix:bigm}.

\subsection{Smooth Components}
% When the functions $f_i$ are smooth and not strongly convex, a
% similar construction is used. We define another helper function
When the functions $f_i$ are smooth and not strongly convex, we define another helper function
$\phi_c$:
\begin{equation}\label{main:defphi}
\phi_c(z) = \begin{cases} 0 & \abs{z} \leq c \\ 2(\abs{z} - c)^2 & c <
  \abs{z} \leq 2c \\ z^2 - 2c^2 & \abs{z} > 2c \end{cases} 
\end{equation}
and the following pairs of functions for $i = 1,...,m/2$: 
\begin{align}\label{main:snsc}
f_{i,1}(x) &= \frac{1}{16}\bigg( \inner{x}{v_{i,0}}^2 - 2a\inner{x}{v_{i,0}} + \sum_{r\text{ even}}^k \phi_c\left(\inner{x}{v_{i,r-1}} - \inner{x}{v_{i,r}}\right)  \bigg) \\
f_{i,2}(x) &= \frac{1}{16}\bigg(  \phi_c\left(\inner{x}{v_{i,k}}\right) + \sum_{r\text{ odd}}^k \phi_c\left(\inner{x}{v_{i,r-1}} - \inner{x}{v_{i,r}}\right)\bigg)\nonumber
\end{align}
with $v_{i,r}$ as before.  The same arguments apply, after replacing
the absolute difference with squared difference. A separate argument is required in this case for the $\Omega(m)$ term in the bound, which 
we show using a construction involving $m$ simple linear functions (see Appendix \ref{appendix:RandSBLower}).

\begin{restatable}{theorem}{RandSBLower}\label{thm:RandSBLower}
For any $\gamma,B,\epsilon > 0$, any $m \geq 2$, and any randomized algorithm $A$ with access to $h_F$, there exists a sufficiently large dimension $d = \mathcal{O}\parens{\frac{\gamma^2B^4}{\epsilon^2}\log\parens{\frac{m\gamma B^2}{\epsilon}}}$ and $m$ functions $f_i$ defined over $\mathcal{X} = \set{x \in \mathbb{R}^d:\norm{x}\leq B}$, which are convex and $\gamma$-smooth, such that to find a point $\hat{x} \in \mathbb{R}^d$ for which $\mathbb{E}\left[F(\hat{x}) - F(x^*)\right] < \epsilon$, $A$ must make $\Omega\left( m + \sqrt{\frac{m\gamma B^2}{\epsilon}} \right)$ queries to $h_F$.
\end{restatable}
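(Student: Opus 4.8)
After rescaling ($x\mapsto x/B$ and $f_i\mapsto f_i/(\gamma B^2)$) I may assume $\gamma=B=1$, so the goal becomes $\Omega\big(m+\sqrt{m/\epsilon}\big)$. I treat two regimes. If $\epsilon\gtrsim 1/m$, then $\sqrt{m/\epsilon}\lesssim\sqrt m\le m$, so it suffices to prove an $\Omega(m)$ lower bound; I do this with a planted-component construction. Pick $i^*\in\{1,\dots,m\}$ uniformly at random, set $f_{i^*}(x)=\Theta(m\epsilon)\,\inner{x}{v}$ for a uniformly random unit vector $v$, and $f_i\equiv 0$ for $i\ne i^*$. Then $F(x)=\Theta(\epsilon)\inner{x}{v}$, so $F(0)-F(x^*)=\Theta(\epsilon)$, while any $\hat{x}$ produced before querying $f_{i^*}$ is independent of $v$ and hence $\E{F(\hat{x})-F(x^*)}=\Theta(\epsilon)$ as well; pushing the expected suboptimality below $\epsilon$ therefore forces the algorithm to locate $i^*$ among $m$ indistinguishable components, which takes $\Omega(m)$ queries in expectation for any randomized algorithm. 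This being standard, I spend the rest of the plan on the regime $\epsilon\le c_0/m$, where the pair construction \eqref{main:snsc} gives $\Omega(\sqrt{m/\epsilon})$ queries, a quantity that in this regime already dominates $m$.

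The construction places $m/2$ pairs $(f_{i,1},f_{i,2})$ on mutually orthogonal subspaces spanned by a single random orthonormal system $\{v_{i,r}\}_{r=0}^{k}$, $i=1,\dots,m/2$, in a high-dimensional $\R^d$, with $k=\Theta\big(\sqrt{1/(m\epsilon)}\big)$ and with scale parameters $a,c>0$ (both shrinking with $\epsilon$) chosen so that the minimizer $x^*$ satisfies $\norm{x^*}<1$ and so that failing to explore a single pair's chain costs $\Theta(\epsilon)$ in suboptimality. The first step is to record the elementary properties of the profile $\phi_c$ from \eqref{main:defphi}: it is convex, $C^1$, $4$-smooth, satisfies $z^2-2c^2\le\phi_c(z)\le z^2$, and both $\phi_c$ and $\phi_c'$ vanish on $[-c,c]$. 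The vanishing region is the ``hiding'' mechanism; $4$-smoothness together with the $1/16$ normalization makes each $f_{i,j}$ convex and $\gamma$-smooth; and the sandwich bound lets me compare $f_{i,1}+f_{i,2}$ with the ideal quadratic chain $\tfrac1{16}\big(y_0^2-2ay_0+y_k^2+\sum_{r=1}^k(y_{r-1}-y_r)^2\big)$ in the coordinates $y_r=\inner{x}{v_{i,r}}$. By orthogonality of the subspaces, $F=\tfrac1m\sum_{i=1}^{m/2}(f_{i,1}+f_{i,2})$ and $\min F=\tfrac1m\sum_i\min(f_{i,1}+f_{i,2})$, so it suffices to show that an algorithm using few queries is far from optimal on a constant fraction of the pairs.

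The heart of the proof is the randomized indistinguishability argument, which I expect to be the main obstacle. Fix the algorithm, draw the $\{v_{i,r}\}$ in advance, and say the algorithm has \emph{discovered} $v_{i,r}$ once it has issued a query $x$ with $\abs{\inner{x}{v_{i,r}}}>c$. The adversary answers every query using the truncated functions built only from already-discovered vectors; on a high-probability event these answers coincide with the true $f_i$, because a query that is nearly orthogonal to an undiscovered $v_{i,r}$ lands in the flat region of every $\phi_c$ term containing it (so the value and gradient are unaffected), and a short extra argument handles the prox oracle since $\prox_{f_i}(x,\beta)$ stays in the span of $x$ together with the vectors already relevant to $f_i$ and so reveals nothing new. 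Because $v_{i,r}$ with $r$ even occurs only in $f_{i,1}$ and with $r$ odd only in $f_{i,2}$, and because a chain term can be ``activated'' only after its preceding vector has been discovered, discovery must proceed in order along each chain, alternating between the pair's two functions; in particular each query to pair $i$ discovers at most one new chain vector. Conditioned on the discovery history, the algorithm's next query is (nearly) independent of the still-undiscovered vectors, which are uniform on the sphere of a subspace of codimension $O(mk)$, so with probability $1-o(1)$ all the relevant inner products remain below $c$, provided $d$ is large; a union bound over the number of queries and the $O(mk)$ vectors is what forces the polynomial dimension stated in the theorem. Setting up the right filtration for this adaptive argument and, in particular, controlling the prox oracle is the delicate step.

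It remains to assemble the pieces. Condition on the good event. An algorithm making fewer than $mk/8$ queries in total has made fewer than $k/2$ queries to at least $m/4$ of the pairs; for each such pair, its output restricted to that pair's subspace lies, up to an error of order $c$, in the span of only the first $O(k/2)$ chain vectors, so the standard Nesterov ``chain-resistance'' estimate applied to the quadratic lower bound $\phi_c(z)\ge z^2-2c^2$ shows it is $\Omega(a^2/k)$-suboptimal for $f_{i,1}+f_{i,2}$. Summing over the $\ge m/4$ under-explored pairs, $F(\hat{x})-F(x^*)\ge\tfrac1m\cdot\tfrac m4\cdot\Omega(a^2/k)=\Omega(a^2/k)=\Omega(\epsilon)$ by the choice $a=\Theta(\sqrt{\epsilon k})$ (which is also exactly what keeps $\norm{x^*}<1$, since $\norm{x^*}^2=\Theta(mka^2)=\Theta(mk^2\epsilon)=\Theta(1)$). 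Taking expectations and absorbing the $o(1)$ failure probability, this exceeds $\epsilon$, so $\E{F(\hat{x})-F(x^*)}<\epsilon$ forces $\Omega(mk)=\Omega(\sqrt{m/\epsilon})$ queries. Undoing the rescaling reinstates the factor $\sqrt{\gamma B^2}$, and combining with the $\Omega(m)$ regime yields the claimed $\Omega\big(m+\sqrt{m\gamma B^2/\epsilon}\big)$.
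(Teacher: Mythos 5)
Your proposal for the dominant $\Omega\big(\sqrt{m\gamma B^2/\epsilon}\big)$ term follows the paper's own route essentially exactly: the $m/2$ pairs on orthogonal chains, the clipped quadratic $\phi_c$ as the ``hiding'' device, random orthonormal $\{v_{i,r}\}$ in high dimension so that an adaptive algorithm's queries stay in the flat region of undiscovered chain links with high probability, a span argument showing that neither gradients nor proxes leak vectors beyond the next one in the chain, and the Nesterov chain suboptimality estimate with the parameter calibration $k=\Theta(1/\sqrt{m\epsilon})$, $a=\Theta(\sqrt{\epsilon k})$, $c=\Theta(\sqrt{\epsilon/k})$. You correctly flag the two genuinely delicate pieces---the prox-oracle separability argument and the filtration/union-bound for the adaptive discovery process---which in the paper constitute Property~\ref{keyprop} and Lemma~\ref{lem:keyrand2}.

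The one place you diverge is the $\Omega(m)$ term. The paper makes all $m$ components nonzero, $f_i(x)=C\inner{x}{v_i}$ with random orthonormal $v_i$, and argues that before $m/2$ components have been queried the iterate has small overlap with a constant fraction of the $v_i$, giving $\Omega(\epsilon)$ suboptimality; this requires $d=\Theta(m\log m)$. You instead plant a single nonzero $f_{i^*}(x)=\Theta(m\epsilon)\inner{x}{v}$ and make the rest identically zero; the oracle for a zero function (value, gradient, and prox) reveals nothing, so until the uniformly random hidden index $i^*$ is queried the output is independent of $v$, and $\E{F(\hat x)-F(x^*)}=\Theta(\epsilon)$. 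This is sound and in fact cleaner---it works already in dimension one, avoids the random-orthonormal machinery for this piece, and localizes the information-theoretic obstruction to a single index-search problem. The cost is that one must be a little careful that conditioning on ``$i^*$ not yet queried'' doesn't correlate $\hat x$ with $v$, but since $v$ is drawn independently of $i^*$ and the zero responses carry no information about $v$, the independence holds. Both constructions prove the same term; yours buys simplicity and dimension-efficiency at the price of a slightly more careful information-flow argument.
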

In the strongly convex case, we add the term $\lambda\norm{x}^2/2$ to $f_{i,1}$ and $f_{i,2}$ (see Appendix \ref{appendix:RandSscLower}) to obtain: 
\begin{restatable}{theorem}{RandSscLower}\label{thm:RandSscLower}
For any $m \geq 2$, any $\gamma,\lambda>0$ such that $\frac{\gamma}{\lambda} > 161m$, any $\epsilon > 0$, any $\epsilon_0 > 60\epsilon\sqrt{\frac{\gamma}{\lambda m}}$, and any randomized algorithm $A$, there exists a dimension $d = \mathcal{O}\parens{\frac{\gamma m}{\lambda \epsilon}\log^4\parens{\frac{\epsilon_0^2 \lambda m}{\gamma \epsilon^2}}\log\parens{\frac{\gamma m^2}{\lambda \epsilon}}}$, domain $\mathcal{X}\subseteq\mathbb{R}^d$, $x_0 \in \mathcal{X}$, and $m$ functions $f_i$ defined on $\mathcal{X}$ which are $\gamma$-smooth and $\lambda$-strongly convex, and such that $F(x_0)-F(x^*)=\epsilon_0$ and such that in order to find a point $\hat{x} \in \mathcal{X}$ such that $\mathbb{E}\left[F(\hat{x}) - F(x^*)\right] < \epsilon$, $A$ must make $\Omega\left(m + \sqrt{\frac{m\gamma}{\lambda}}\log\left(\frac{\epsilon_0}{\epsilon}\sqrt{\frac{m\lambda}{\gamma}}\right)\right)$ queries to $h_F$.
\end{restatable}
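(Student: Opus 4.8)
The plan is to obtain Theorem~\ref{thm:RandSscLower} from the smooth randomized construction \eqref{main:snsc} by adding a strong-convexity regularizer and rescaling, exactly as Theorem~\ref{thm:DetSscLower} was obtained from \eqref{eq:DetSmoothF}. Concretely, I would take the $m/2$ pairs $(f_{i,1},f_{i,2})$ of \eqref{main:snsc}, built from the smoothed gadget $\phi_c$ of \eqref{main:defphi} on $m/2$ mutually orthogonal random subspaces $\spn{v_{i,0},\dots,v_{i,k}}$, rescale the chain part by a factor $S = \Theta(\gamma)$ so that each component is $(\gamma-\lambda)$-smooth, and then add $\frac{\lambda}{2}\norm{x}^2$ to every component, making each $f_i$ exactly $\gamma$-smooth and $\lambda$-strongly convex. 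The linear coefficient $a$ in \eqref{main:snsc} is a free knob, chosen so that $F(x_0)-F(x^*)=\epsilon_0$ with $x_0=0$, while the chain length $k$, the threshold $c$, and the dimension $d$ are fixed at the end. Writing $P_i := f_{i,1}+f_{i,2}$ we have $F=\frac1m\sum_{i=1}^{m/2}P_i$; the two copies of $\frac\lambda2\norm{x}^2$ per pair sum to $\lambda\norm{x}^2$ over the $m$ components, so $F$ is $\lambda$-strongly convex, whereas on each subspace the chain part of $F$ has curvature only $\Theta(\gamma/m)$. Hence each pair has an ``effective condition number'' $\kappa := \Theta\!\left(\tfrac{\gamma}{\lambda m}\right)$, which the hypothesis $\gamma/\lambda>161m$ makes a sufficiently large constant for the rate $q := \frac{\sqrt{\kappa}-1}{\sqrt{\kappa}+1}$ to satisfy $\log(1/q)=\Theta(1/\sqrt{\kappa})$.

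The core is a per-pair geometric-decay lower bound, in the spirit of Nesterov's lower bound for smooth strongly convex minimization, run simultaneously on all $m/2$ subspaces. Fixing the random orthonormal vectors in a high-dimensional space, a Johnson--Lindenstrauss-type concentration bound shows that with high probability over $\set{v_{i,r}}$---simultaneously for \emph{every} sequence of queries that a randomized algorithm could make---every iterate has inner product at most $c$ with every not-yet-discovered $v_{i,r}$. Because $\phi_c$ is flat on $[-c,c]$ in both value and gradient, and because within each $f_{i,j}$ the chain is broken into decoupled pairs of coordinates (even-$r$ terms in $f_{i,1}$, odd-$r$ in $f_{i,2}$), such a near-orthogonal query to $f_{i,j}$ reveals no new vector of pair $i$---and this remains true for a \prox\ query, since a \prox\ minimizer also cannot transport mass through a flat $\phi_c$ term. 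Consequently revealing $v_{i,0},\dots,v_{i,k}$ requires at least $k$ queries to pair $i$ (alternating between $f_{i,1}$ and $f_{i,2}$), and if the iterate lies in $\spn{v_{i,0},\dots,v_{i,t}}$ then it is at least $g(t)$-suboptimal on $P_i$, where a standard truncation argument for the chain quadratic plus regularizer gives that $g(t)$ decays geometrically in $t$ at a rate governed by $q$, with prefactors controlled by $g_0 := P_i(0)-\min_x P_i = \Theta(Sa^2)=\Theta(\epsilon_0)$.

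It remains to aggregate. Since $F(\hat{x})-F(x^*)=\frac1m\sum_i\!\big(P_i(\hat{x})-\min P_i\big)\ge\frac1m\sum_i g(t_i)$, where $t_i$ is the number of revealed vectors of pair $i$, an $\epsilon$-suboptimal $\hat{x}$ forces $t_i\ge T^\star$ for at least $m/4$ of the pairs, with $T^\star=\Theta\!\big(\sqrt{\kappa}\,\log(\text{ratio})\big)$; this costs $\Omega(mT^\star)=\Omega\!\big(\sqrt{\tfrac{m\gamma}{\lambda}}\,\log(\text{ratio})\big)$ queries, and carrying the scalings through ($g_0=\Theta(\epsilon_0)$, per-pair target gap $\Theta(\epsilon)$, plus the geometric prefactors) the argument of the logarithm works out to $\Theta\!\big(\tfrac{\epsilon_0}{\epsilon}\sqrt{\tfrac{m\lambda}{\gamma}}\big)$; the hypothesis $\epsilon_0>60\epsilon\sqrt{\gamma/(\lambda m)}$ is precisely what keeps this argument bounded away from $1$. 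Since the high-probability event over $\set{v_{i,r}}$ holds for every realization of the algorithm's internal coins, Fubini yields a \emph{fixed} instance on which $\E{F(\hat{x})-F(x^*)}\ge\epsilon$ unless this many queries are made. Finally the $\Omega(m)$ term is obtained separately on $m$ fresh coordinates, via $m$ simple quadratics $f_i(x)=\pm\inner{x}{e_i}+\tfrac\lambda2\norm{x}^2$ with i.i.d.\ random signs: optimizing $F$ to accuracy $\epsilon$ requires recovering $\Omega(m)$ of the signs, hence $\Omega(m)$ queries. Placing the two constructions on disjoint coordinates and taking the worse of the two bounds gives $\Omega\!\big(m+\sqrt{\tfrac{m\gamma}{\lambda}}\log(\cdot)\big)$, and the various pieces contribute the stated dimension $d$.

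I expect the main obstacle to be the quantitative bookkeeping in the last two paragraphs: making the geometric-decay rate, the truncation bound $g(t)$, and the dependence of $g_0$ on $a$ tight enough that the single set of scaling choices simultaneously makes each $f_i$ exactly $\gamma$-smooth and $\lambda$-strongly convex, achieves $F(x_0)-F(x^*)=\epsilon_0$, and produces precisely the factor $\log\!\big(\tfrac{\epsilon_0}{\epsilon}\sqrt{\tfrac{m\lambda}{\gamma}}\big)$ under exactly the stated conditions. A secondary technical point is the concentration argument that forces near-orthogonality uniformly over \emph{all} adaptive query sequences, which is what inflates $d$ to the stated polynomially large value; omitting the assumption that iterates lie in the span of previous responses is exactly what makes this step necessary.
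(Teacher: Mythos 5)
Your plan matches the paper's own proof in all essential respects: the paper's Appendix on this theorem builds exactly the $\phi_c$-based chains on $m/2$ orthogonal random subspaces, scales the chain coefficient and adds $\frac{\lambda}{2}\norm{x}^2$ to each component, invokes the same near-orthogonality concentration lemma (Property~\ref{keyprop} plus Lemma~\ref{lem:keyrand2}), and derives the same per-pair geometric decay with effective condition number $\Theta(\gamma/(\lambda m))$---realized in the paper by redistributing the $\frac{\lambda}{2m}\norm{P_i x}^2$ mass among the pair $i$ terms, which is what you implicitly describe as the ``chain part of $F$ has curvature only $\Theta(\gamma/m)$.'' The only cosmetic divergence is your $\Omega(m)$ piece, which you propose via $\pm\inner{x}{e_i}+\frac{\lambda}{2}\norm{x}^2$ with i.i.d.~signs (an information-theoretic/sign-recovery argument in the style of Appendix~\ref{appendix:bigm}), whereas the paper reuses the $f_i(x)=\sqrt{m}\inner{x}{v_i}+\frac{1}{2}\norm{x}^2$ construction with random orthonormal $v_i$ from the remark following Theorem~\ref{thm:RandSBLower}; either route works and this does not change the substance of the argument.
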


\paragraph{Remark:}
We consider \eqref{eq:main} as a constrained optimization problem, thus the minimizer of $F$ could be achieved on the boundary of $\mathcal{X}$, meaning that the gradient need not vanish. If we make the additional assumption that the minimizer of $F$ lies on the \emph{interior} of $\mathcal{X}$ (and is thus the unconstrained global minimum), Theorems \ref{thm:DetLBLower}-\ref{thm:RandSscLower} all still apply, with a slight modification to Theorems \ref{thm:DetSBLower} and \ref{thm:RandSBLower}. Since the gradient now needs to vanish on $\mathcal{X}$, $0$ is always $\mathcal{O}(\gamma B^2)$-suboptimal, and only values of $\epsilon$ in the range $0 < \epsilon < \frac{\gamma B^2}{128}$ and $0 < \epsilon < \frac{9\gamma B^2}{128}$ result in a non-trivial lower bound (see Remarks at the end of Appendices \ref{appendix:DetSBLower} and \ref{appendix:RandSBLower}).

%%%%%%%%%%%%%%%%%%%%%%%%%%%%%%%%%%%%%%%%%%%%%
%%%%%%%%%%					 CONCLUSION        	           %%%%%%%%%%
%%%%%%%%%%%%%%%%%%%%%%%%%%%%%%%%%%%%%%%%%%%%%
\section{Conclusion}
We provide a tight (up to a log factor) understanding of optimizing
finite sum problems of the form \eqref{eq:main} using a component prox
oracle.  

Randomized optimization of \eqref{eq:main} has been the subject of
much research in the past several years, starting with the
presentation of SDCA and SAG, and continuing with accelerated variants.  
Obtaining lower bounds can be very
useful for better understanding the problem, for knowing where it
might or might not be possible to improve or where different
assumptions would be needed to improve, and for establishing
optimality of optimization methods.
% For example, the $1/\sqrt{\epsilon}$
% lower bound for first-order smooth optimization preceded and lead to
% the development of Nesterov's optimal accelerated method
% \cite{NemirovskyLowerBound,NesterovAGD}.  
Indeed, several
attempts have been made at lower bounds for the finite sum setting
\cite{AgarwalBottou,Lan}.  But as we explain in the introduction, these
were unsatisfactory and covered only limited classes of methods.
Here we show that in a fairly general sense, accelerated SDCA, SVRG, SAG, 
and \textsc{Katyusha} are optimal up to a log factor.  Improving on their runtime
would require additional assumptions, or perhaps a stronger oracle. However, even 
if given ``full" access to the component functions, all algorithms that we can think
of utilize this information to calculate a prox vector. Thus, it is unclear what
realistic oracle would be more powerful than the prox oracle we consider.

Our results highlight the power of randomization, showing that no
deterministic algorithm can beat the linear dependence on $m$ and
reduce it to the $\sqrt{m}$ dependence of the randomized algorithms.

The deterministic algorithm for non-smooth problems that we present in
Section \ref{sec:LipUppers} is also of interest in its own right.  It
avoids randomization, which is not usually problematic, but makes it
fully parallelizable unlike the optimal stochastic methods.  Consider,
for example, a supervised learning problem where
$f_i(x)=\ell(\langle \phi_i,x\rangle,y_i)$ is the (non-smooth) loss on a single training example $(\phi_i,y_i)$, and the data is
distributed across machines.  Calculating a prox oracle involves
applying the Fenchel conjugate of the loss function $\ell$, but even if a
closed form is not available, this is often easy to compute numerically, and
is used in algorithms such as SDCA.  But unlike SDCA, which is
inherently sequential, we can calculate all $m$ prox operations in parallel on the
different machines, average the resulting gradients of the smoothed
function, and take an accelerated gradient step to implement our optimal deterministic
algorithm.  This method attains a
recent lower bound for distributed optimization, resolving a question
raised by \citet{Ohad}, and when the number of machines is
very large improves over all other known distributed optimization
methods for the problem.

In studying finite sum problems, we were forced to explicitly study
lower bounds for randomized optimization as opposed to stochastic optimization 
(where the source of randomness is the oracle, not the algorithm).  Even for the classic
problem of minimizing a smooth function using a first order oracle, we
could not locate a published proof that applies to randomized
algorithms.  We provide a simple construction using
$\epsilon$-insensitive differences that allows us to easily obtain such lower
bounds without reverting to assuming the iterates are spanned by
previous responses (as was done, e.g., in \cite{Lan}), and could
potentially be useful for establishing randomized lower bounds also in
other settings.

\paragraph{Acknowledgements:}
We thank Ohad Shamir for his helpful discussions and for pointing out \citep{HazanReduction}.

%\newpage
\bibliographystyle{plainnat}
{\footnotesize \bibliography{submission}}

\appendix
\section{Upper bounds for non-smooth sums \label{appendix:LipUpper}}
Consider the case where the components are not strongly convex. As shown in lemma \ref{lem:prox}, we can use a single call to a prox oracle to obtain the gradient of 
\begin{equation*}
f^{(\beta)}(x) = \inf_{u\in\mathcal{X}} f(u) + \frac{\beta}{2}\norm{x-u}^2
\end{equation*}
which is a $\beta$-smooth approximation to $f$. We then consider the new optimization problem:
\begin{equation} \label{eq:appendixFtilde}
\min_{x \in \mathcal{X}}\set{ \tilde{F}^{(\beta)}(x) := \frac{1}{m} \sum_{i=1}^m f^{(\beta)}_i(x) }.
\end{equation}
Also by lemma \ref{lem:prox}, setting $\beta = \frac{L^2}{\epsilon}$ ensures that $\tilde{F}^{(\beta)}(x) \leq F(x) \leq \tilde{F}^{(\beta)}(x) + \frac{\epsilon}{2}$ for all $x$. Consequently, any point which is $\frac{\epsilon}{2}$-suboptimal for $\tilde{F}^{(\beta)}$ will be $\epsilon$-suboptimal for $F$. This technique therefore reduces the task of optimizing an instance of an $L$-Lipschitz finite sum to that of optimizing an $\frac{L^2}{\epsilon}$-smooth finite sum.
 
Solving \eqref{eq:appendixFtilde} to $\frac{\epsilon}{2}$-suboptimality using AGD requires $\mathcal{O}\left( \frac{mLB}{\epsilon} \right)$ gradients for $\tilde{F}^{(\beta)}$ which requires that same number of prox oracles from $h_F$. Formally:

\begin{theorem}
For any $L,B> 0$, any $\epsilon < LB$, and any $m \geq 1$ functions $f_i$ which are convex and $L$-Lipschitz continuous over the domain $\mathcal{X} \subseteq \set{x\in\mathbb{R}^d:\norm{x} \leq B}$, applying AGD to \eqref{eq:appendixFtilde} for $\beta = \frac{L^2}{\epsilon}$, will result in a point $\hat{x}$ such that $F(\hat{x}) - F(x^*) < \epsilon$ after $\mathcal{O}\left( \frac{mLB}{\epsilon}  \right)$ queries to $h_F$.
\end{theorem}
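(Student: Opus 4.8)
The plan is to chain together two facts already available in the excerpt: the approximation guarantee of the Moreau-envelope smoothing (Lemma~\ref{lem:prox}) and the standard convergence rate of accelerated gradient descent for smooth convex minimization over a bounded domain. First I would observe that, by part~3 of Lemma~\ref{lem:prox} applied to each component and averaged, $\tilde{F}^{(\beta)}(x) \leq F(x) \leq \tilde{F}^{(\beta)}(x) + \frac{L^2}{2\beta}$ for every $x \in \mathcal{X}$; with the choice $\beta = L^2/\epsilon$ this collapses to $\tilde{F}^{(\beta)}(x) \leq F(x) \leq \tilde{F}^{(\beta)}(x) + \epsilon/2$. Hence if $\hat{x}$ is $\epsilon/2$-suboptimal for $\tilde{F}^{(\beta)}$, then
\[
F(\hat{x}) - F(x^*) \leq \tilde{F}^{(\beta)}(\hat{x}) + \tfrac{\epsilon}{2} - \tilde{F}^{(\beta)}(x^*_{\tilde F}) \leq \tfrac{\epsilon}{2} + \tfrac{\epsilon}{2} = \epsilon,
\]
where $x^*_{\tilde F}$ minimizes $\tilde{F}^{(\beta)}$ and we used $\tilde{F}^{(\beta)}(x^*_{\tilde F}) \leq \tilde{F}^{(\beta)}(x^*) \leq F(x^*)$. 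So it suffices to drive $\tilde{F}^{(\beta)}$ to $\epsilon/2$-suboptimality.

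Next I would invoke the AGD guarantee: since each $f_i^{(\beta)}$ is $\beta$-smooth (part~1 of Lemma~\ref{lem:prox}), the average $\tilde{F}^{(\beta)}$ is $\beta$-smooth, and AGD run for $T$ iterations on a $\beta$-smooth convex function over a domain of radius $B$ produces a point with suboptimality $\mathcal{O}(\beta B^2 / T^2)$. Setting this $\leq \epsilon/2$ and substituting $\beta = L^2/\epsilon$ gives $T = \mathcal{O}(\sqrt{\beta B^2/\epsilon}) = \mathcal{O}(\sqrt{L^2 B^2/\epsilon^2}) = \mathcal{O}(LB/\epsilon)$ iterations. Each AGD iteration needs one gradient of $\tilde{F}^{(\beta)}$, i.e.\ all $m$ component gradients $\nabla f_i^{(\beta)}$, and by part~2 of Lemma~\ref{lem:prox} each such gradient is obtained from a single call $h_F(\cdot, i, \beta)$ (extracting the prox component and forming $\beta(x - \prox_{f_i}(x,\beta))$). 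Thus the total number of oracle queries is $m \cdot T = \mathcal{O}(mLB/\epsilon)$, as claimed. I would also note the hypothesis $\epsilon < LB$ simply ensures the bound is in the nontrivial regime (so that $T \geq 1$).

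There is no real obstacle here — the statement is essentially a corollary assembled from Lemma~\ref{lem:prox} and a textbook AGD rate — but the one point deserving care is making sure the smoothing of the \emph{sum of envelopes} $\tilde{F}^{(\beta)}$, rather than the envelope of the sum, still yields a valid approximation of $F$; this is handled cleanly by the fact that the sandwich inequality in part~3 of Lemma~\ref{lem:prox} holds componentwise and is preserved under averaging, which is exactly the remark made just before the theorem in the main text. A secondary minor point is confirming that $h_F$ returns $\prox_{f_i}(x,\beta)$ for the requested $\beta$, so that the gradient of the $\beta$-envelope is computable with one oracle call and no additional overhead; this is immediate from the definition of $h_F$ in the introduction.
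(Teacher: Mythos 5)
Your argument is correct and is essentially the same one the paper gives: apply the sandwich bound from Lemma~\ref{lem:prox}(3) componentwise with $\beta = L^2/\epsilon$, reduce to $\epsilon/2$-optimizing the $\beta$-smooth average of envelopes, and plug into the standard $\mathcal{O}(\sqrt{\beta B^2/\epsilon})$ AGD iteration count with $m$ oracle calls per iteration. The only thing you make more explicit than the paper's terse presentation is the bookkeeping that $\tilde{F}^{(\beta)}(x^*_{\tilde F}) \leq F(x^*)$, which is a useful clarification but not a different route.
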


When the component functions are $\lambda$-strongly convex, a more sophisticated strategy is required to avoid an extra $\log$ factor. The solution is the AdaptSmooth algorithm \citep{HazanReduction}. %which is analogous to \texttt{AdaptReg}. 
This involves solving $\mathcal{O}(\log\frac{1}{\epsilon})$ smooth and strongly convex subproblems, where the $t^{\text{th}}$ subproblem is reducing the suboptimality of the $\beta_t$-smooth and $\lambda$-strongly convex function $F^{(\beta_t)}(x)$ by a factor of four, where $\beta_t = \frac{L^2}{\epsilon_0}2^t$ and where $\epsilon_0 \leq \frac{L^2}{\lambda}$ upper bounds the initial suboptimality. Using this method results in an $\epsilon$-suboptimal solution for $F$ after $\sum_{t=0}^{\log\frac{\epsilon_0}{\epsilon}} \text{Time}(\beta_t, \lambda)$ queries to $h_F$. 

In the case of AGD, Time$(\gamma,\lambda) = \mathcal{O}\left( m\sqrt{\frac{\gamma}{\lambda}} \right)$ and
  \[ \sum_{t=0}^{\log\frac{\epsilon_0}{\epsilon}} \text{Time}\left(\frac{L^2}{\epsilon_0}2^t, \lambda\right) = \mathcal{O}\left( \frac{mL}{\sqrt{\lambda\epsilon}} \right) \]
 
% In the case of ASVRG, Time$(\gamma,\lambda) = \tilde{\mathcal{O}}\left(m + \sqrt{\frac{m\gamma}{\lambda}}\right)$ and 
% \[ \sum_{t=0}^{\log\frac{\Delta}{\epsilon}} \text{Time}\left(\frac{L^2}{\Delta}2^t, \lambda\right) = \tilde{\mathcal{O}}\left( m\log\frac{L^2}{\lambda\epsilon} + \frac{\sqrt{m}L}{\sqrt{\lambda\epsilon}} \right) \]

\begin{theorem}
 For any $L,\lambda,\epsilon > 0$, and any $m \geq 1$ functions $f_i$, which are $L$-Lipschitz continuous and $\lambda$-strongly convex on the domain $\mathcal{X} \subseteq \mathbb{R}^d$, applying AdaptSmooth with AGD will find a point $\hat{x} \in \mathcal{X}$ such that $F(\hat{x}) - F(x^*) < \epsilon$ after $\mathcal{O}\left( \frac{mL}{\sqrt{\lambda\epsilon}} \right)$ queries to $h_F$.
\end{theorem}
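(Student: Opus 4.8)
The plan is to chain together two reductions, each of which is already available in the paper. The first reduction, Lemma~\ref{lem:prox}, lets us trade an $L$-Lipschitz component for a $\beta$-smooth surrogate (its $\beta$-Moreau envelope) at the cost of a $\frac{L^2}{2\beta}$ additive error, while a single prox query to $h_F$ suffices to evaluate the surrogate's gradient. The second reduction is the AdaptSmooth scheme of \citet{HazanReduction}, which handles the strongly convex case by solving a logarithmic number of smooth-and-strongly-convex subproblems, each with a geometrically increasing smoothness parameter $\beta_t = \frac{L^2}{\epsilon_0}2^t$, so that the error $\frac{L^2}{2\beta_t}$ introduced by the $t^{\text{th}}$ smoothing is matched to the target suboptimality at stage $t$.

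First I would verify that $F_{\lambda}$ is $\lambda$-strongly convex and that each $f_i^{(\beta_t)}$ is $\beta_t$-smooth and $\lambda$-strongly convex, so that running AGD on the averaged surrogate $\tilde{F}^{(\beta_t)}$ is legitimate and costs $\mathrm{Time}(\beta_t,\lambda) = \mathcal{O}\!\left(m\sqrt{\beta_t/\lambda}\right)$ oracle calls to bring the subproblem's suboptimality down by a constant factor (here the $m$ arises because one gradient of $\tilde{F}^{(\beta_t)}$ requires $m$ prox calls, one per component). Note that $\tilde F^{(\beta_t)}$ need not equal the $\beta_t$-Moreau envelope of $F$, but the pointwise sandwich $\tilde F^{(\beta_t)}(x) \le F(x) \le \tilde F^{(\beta_t)}(x) + \frac{L^2}{2\beta_t}$ is all AdaptSmooth needs. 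Next I would bound the initial suboptimality by $\epsilon_0 \le \frac{L^2}{\lambda}$ (which holds for the regularized problem since $F_\lambda(0)-F_\lambda(x^*) \le \frac{\lambda}{2}\|x^*\|^2 \le \frac{L^2}{2\lambda}$, using $\|\nabla F\| \le L$ hence $\|x^*\|\le L/\lambda$), and then sum the per-stage costs:
\begin{equation*}
\sum_{t=0}^{\log(\epsilon_0/\epsilon)} \mathrm{Time}\!\left(\tfrac{L^2}{\epsilon_0}2^t,\lambda\right)
= \sum_{t=0}^{\log(\epsilon_0/\epsilon)} \mathcal{O}\!\left(\frac{mL}{\sqrt{\lambda\epsilon_0}}\,2^{t/2}\right)
= \mathcal{O}\!\left(\frac{mL}{\sqrt{\lambda\epsilon_0}}\cdot\sqrt{\frac{\epsilon_0}{\epsilon}}\right)
= \mathcal{O}\!\left(\frac{mL}{\sqrt{\lambda\epsilon}}\right),
\end{equation*}
where the geometric sum is dominated by its last term. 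This gives the claimed $\mathcal{O}\!\left(mL/\sqrt{\lambda\epsilon}\right)$ query complexity.

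The main obstacle is not any single calculation but rather importing the AdaptSmooth guarantee in a black-box way while being careful about two points: (i) the smoothing error at stage $t$ must be controlled relative to the \emph{current} target accuracy, which is why the geometric schedule for $\beta_t$ is essential — a fixed $\beta$ would force $\beta = L^2/\epsilon$ throughout and reintroduce a spurious $\log\frac1\epsilon$ factor; and (ii) one must confirm that AdaptSmooth's analysis only relies on the one-sided approximation bound $F - \frac{L^2}{2\beta} \le \tilde F^{(\beta)} \le F$ and on having a subroutine that contracts suboptimality of a smooth strongly convex function by a constant factor in $\mathrm{Time}(\beta,\lambda)$ steps — both of which hold here. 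Everything else is routine: the correctness of AGD's rate on smooth strongly convex functions is standard, and translating a gradient-of-surrogate count into an $h_F$-query count is immediate from part~2 of Lemma~\ref{lem:prox}.
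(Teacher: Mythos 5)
Your proposal follows essentially the same route as the paper: smooth each component via its Moreau envelope (Lemma~\ref{lem:prox}), invoke AdaptSmooth of \citet{HazanReduction} with the geometric schedule $\beta_t = \frac{L^2}{\epsilon_0}2^t$, cost each stage at $\mathrm{Time}(\beta_t,\lambda)=\mathcal{O}\bigl(m\sqrt{\beta_t/\lambda}\bigr)$ using AGD, and observe the geometric sum is dominated by its final term, giving $\mathcal{O}\bigl(mL/\sqrt{\lambda\epsilon}\bigr)$. The only blemish is your justification of $\epsilon_0 \le L^2/\lambda$ via a stray reference to a ``regularized'' $F_\lambda$ and the inequality $F_\lambda(0)-F_\lambda(x^*)\le\frac{\lambda}{2}\|x^*\|^2$, which does not follow from strong convexity alone; the clean argument is simply $F(0)-F(x^*)\le L\|x^*\|\le L^2/\lambda$ using Lipschitzness and $\|x^*\|\le L/\lambda$ — but this is a detail and does not affect the substance.
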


To conclude our presentation of upper bounds, we emphasize that the smoothing methods described in this section will only improve oracle complexity when used with accelerated methods. For example, using non-accelerated gradient descent on $\tilde{F}^{(\beta)}$ in the not strongly convex case leads to an oracle complexity of $\mathcal{O}\left( \frac{mL^2B^2}{\epsilon^2} \right)$, which is no better than the convergence rate of gradient descent applied directly to $F$.

\section{Lower bounds for deterministic algorithms}
%%%%%%%%%%%%%%%%%%%%%%%%%%%%%%%%%%%%%%%%%%%%%
%%%%%%%%%%				DET LIP NON-SC   	         	           %%%%%%%%%%
%%%%%%%%%%%%%%%%%%%%%%%%%%%%%%%%%%%%%%%%%%%%%
\subsection{Non-smooth and not strongly convex components \label{appendix:DetLBLower}}
\DetLBLower*
\begin{proof}
Without loss of generality, we can assume $L = B = 1$. For particular values $b$ and $k$ to be decided upon later, we use the functions \eqref{eq:DetLipF}:
\[ f_i(x) = \frac{1}{\sqrt{2}}\abs{b - \inner{x}{v_0}} + \frac{1}{2\sqrt{k}} \sum_{r=1}^{k} \delta_{i,r} \abs{\inner{x}{v_{r-1}} - \inner{x}{v_{r}}} \]
It is straightforward to confirm that $f_i$ is both $1$-Lipschitz and convex (for orthonormal vectors $v_r$ and indicators $\delta_{i,r} \in \set{0,1}$). As explained in the main text, the orthonormal vectors $v_r \in \mathbb{R}^d$ and indicators $\delta_{i,r} \in\set{0,1}$ are chosen according to the behavior of the algorithm $A$. At the end of each round $t$, we set $\delta_{i,t} = 1$ iff the algorithm did \emph{not} query function $i$ during round $t$ (and zero otherwise), and we set $v_t$ to be orthogonal to the vectors $\set{v_0,...,v_{t-1}}$ as well as every query made by the algorithm so far. Orthogonalizing the vectors in this way is possible as long as the dimension is at least as large as the number of oracle queries $A$ has made so far plus $t$. We are allowed to construct $v_t$ and $\delta_{i,t}$ in this way as long as the algorithm's execution up until round $t$, and thus our choice of $v_t$ and $\delta_{i,t}$, depends only on $v_r$ and $\delta_{i,r}$ for $r < t$. We can enforce this condition by answering the queries during round $t$ according to
\[ f_i^t(x) = \frac{1}{\sqrt{2}}\abs{b - \inner{x}{v_0}} + \frac{1}{2\sqrt{k}} \sum_{r=1}^{t-1} \delta_{i,r} \abs{\inner{x}{v_{r-1}} - \inner{x}{v_{r}}} \]
For non-smooth functions, the subgradient oracle is not
 uniquely defined----many different subgradients might be a valid response.  
 However, in order to say that an algorithm successfully optimizes a function, 
 it must be able to do so no matter which subgradient is receives. Conversely, to
show a lower bound, it is sufficient to show that for \emph{some} valid
subgradient the algorithm fails.  And so, in constructing a ``hard''
instance to optimize we are actually constructing both a function and
a subgradient oracle for it, with specific subgradient responses. Therefore, 
answering the algorithm's queries during round $t$ according to $f_i^t$ is 
valid so long as the subgradient we return is a valid subgradient for $f_i$ (the 
converse need not be true) and the prox returned is exactly the prox of $f_i$.
For now, assume that this query-answering strategy is consistent (we will prove this
last).

Then if $d = \lceil \frac{m}{\epsilon} \rceil + k + 1$ and if $x$ is an iterate generated both before $A$ completes round $k$ and before it makes $\lceil \frac{m}{\epsilon} \rceil$ queries to $h_F$ (so that the dimension is large enough to orthogonalize each $v_t$ as described above), then $\inner{x}{v_k} = 0$ by construction. This allows us to bound the suboptimality of $F(x)$ (since $\lceil\frac{m}{2}\rceil$ functions are queried during each round, $\sum_{i=1}^m\delta_{i,r} = \lfloor\frac{m}{2}\rfloor$):
\begin{align*}
F(x) &= \frac{1}{m}\sum_{i=1}^m f_i(x) \\
&= \frac{1}{\sqrt{2}}\abs{b - \inner{x}{v_0}} + \frac{\lfloor \frac{m}{2} \rfloor}{2m\sqrt{k}} \sum_{r=1}^{k} \abs{\inner{x}{v_{r-1}} - \inner{x}{v_{r}}}
\end{align*}
$F$ is non-negative and  $F(x_b) = 0$ where $x_b = b\sum_{r=0}^{k} v_r$. Choosing $b = \frac{1}{\sqrt{k+1}}$ makes $\norm{x_b} = 1$ so that $x_b \in \mathcal{X}$. Therefore, $F$ achieves its minimum on $\mathcal{X}$ and
\begin{align*}
F(x) - F(x^*) &= \frac{1}{\sqrt{2}}\abs{b - \inner{x}{v_0}} + \frac{\lfloor \frac{m}{2} \rfloor}{2m\sqrt{k}} \sum_{r=1}^{k} \abs{\inner{x}{v_{r-1}} - \inner{x}{v_{r}}} - 0 \\
&\geq \frac{1}{\sqrt{2}}\abs{b - \inner{x}{v_0}} + \frac{1}{6\sqrt{k}} \abs{\inner{x}{v_{0}} - \inner{x}{v_{k}}} \\
&= \frac{1}{\sqrt{2}}\abs{b - \inner{x}{v_0}} + \frac{1}{6\sqrt{k}}\abs{\inner{x}{v_{0}}} \\
&\geq \min_{z \in \mathbb{R}} \frac{1}{\sqrt{2}}\abs{b - z} + \frac{1}{6\sqrt{k}} \abs{z} \\
&= \frac{b}{6\sqrt{k}} \\
&\geq \frac{1}{12k}
\end{align*}
Where the final inequality holds when $k \geq 1$. Setting $k = \lfloor \frac{1}{12\epsilon} \rfloor$ implies $F(x) - F(x^*) \geq \epsilon$. Therefore, $A$ must either query $h_F$ more than $\lceil\frac{m}{\epsilon}\rceil$ times or complete $k$ rounds to reach an $\epsilon$-suboptimal solution. Completing each round requires at least $\lceil\frac{m}{2}\rceil$ queries to $h_F$, so when $\epsilon \leq \frac{1}{12}$, this implies a lower bound of 
\[ \min\left(\frac{m}{\epsilon},\  \left\lfloor \frac{1}{12\epsilon} \right\rfloor \frac{m}{2} \right) \geq \frac{m}{48\epsilon} \]

To complete the proof, it remains to show that the subgradients and proxs of $f_i^t$ are consistent with those of $f_i$ at every time $t$. Since every function operates on the $(k+1)$-dimensional subspace of $\mathbb{R}^d$ spanned by $\set{v_r}$, it will be convenient to decompose vectors into two components: $x = x^v + x^\perp$ where $x^v = \sum_{r=0}^{k}\inner{x}{v_r}v_r$ and $x^\perp = x - x^v$. Note that $f_i^t(x) = f_i^t(x^v)$.

\begin{lemma}
%For any $t < k$ and any $x$ such that $x^v \in \spn{v_1, v_2, ..., v_{t}}$, if function $i$ is queried during round $t$, then $\exists g_x \in \partial f_i^t(x)$ s.t. $g_x \in \partial f_i(x)$ and $g_x \in \spn{v_1, v_2, ..., v_t}$.
For any $t \leq k$ and any $x$ such that $x^v \in \spn{v_0, v_1, ..., v_{t-1}}$, if function $i$ is queried during round $t$, then $\partial f_i^t(x) \subseteq \partial f_i(x)$.
\end{lemma}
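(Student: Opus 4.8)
The plan is to write $f_i = f_i^t + g_i^t$, where
\[ g_i^t(x) := f_i(x) - f_i^t(x) = \frac{1}{2\sqrt{k}} \sum_{r=t}^{k} \delta_{i,r}\, \abs{\inner{x}{v_{r-1}} - \inner{x}{v_{r}}}, \]
and to show that $0 \in \partial g_i^t(x)$ for every $x$ satisfying the hypothesis. Since for convex functions one always has $\partial f_i(x) = \partial(f_i^t + g_i^t)(x) \supseteq \partial f_i^t(x) + \partial g_i^t(x)$, once we know $0\in\partial g_i^t(x)$ we obtain $\partial f_i(x) \supseteq \partial f_i^t(x) + \set{0} = \partial f_i^t(x)$, which is exactly the claim; no constraint qualification is needed for this inclusion.

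First I would use the hypothesis that component $i$ is queried during round $t$: by the definition of the indicators this means $\delta_{i,t}=0$, so the $r=t$ summand drops and $g_i^t(x) = \frac{1}{2\sqrt{k}}\sum_{r=t+1}^{k} \delta_{i,r}\abs{\inner{x}{v_{r-1}}-\inner{x}{v_r}}$. This is the crux of the argument and the only place the hypothesis enters, and it matters precisely because $\inner{x}{v_{t-1}}$ need not vanish, so the $r=t$ term would otherwise obstruct what follows.

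Next I would exploit that $x^v \in \spn{v_0,\dots,v_{t-1}}$. Writing $x = x^v + x^\perp$ with $x^\perp$ orthogonal to every $v_j$, we get $\inner{x}{v_j} = \inner{x^v}{v_j} = 0$ for all $j \ge t$. Hence for each $r \ge t+1$ both $\inner{x}{v_{r-1}}=0$ and $\inner{x}{v_r}=0$, so the affine map $y\mapsto \inner{y}{v_{r-1}-v_r}$ inside the $r$-th absolute value vanishes at $x$. At that kink, $\partial\big(\abs{\inner{\cdot}{v_{r-1}-v_r}}\big)(x) = \set{s(v_{r-1}-v_r): s\in[-1,1]} \ni 0$. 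Since the subdifferential of a finite sum of convex functions contains the Minkowski sum of the summands' subdifferentials, and each summand of $g_i^t$ contributes $0$ at $x$, we conclude $0\in\partial g_i^t(x)$, which finishes the proof.

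The only real obstacle is careful bookkeeping: ensuring the $r=t$ term is eliminated via $\delta_{i,t}=0$ rather than via orthogonality, and checking that the degenerate endpoints are covered by the same argument --- for $t=1$ there is no $r=1$ summand and $x^v=0$, and for $t=k$ the function $g_i^t$ is either identically $0$ or consists only of the $r=k$ term, which is again killed by $\delta_{i,k}=0$. The consistency of the \emph{prox} responses in the overall construction is a separate matter not addressed by this lemma.
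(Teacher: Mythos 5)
Your proof is correct and takes essentially the same approach as the paper: both hinge on $\delta_{i,t}=0$ eliminating the $r=t$ term and on the fact that for $r>t$ the argument of each absolute value vanishes at $x$, so $0$ is a valid subgradient contribution. You package this via the always-valid inclusion $\partial f_i(x)\supseteq\partial f_i^t(x)+\partial g_i^t(x)$ and $0\in\partial g_i^t(x)$, whereas the paper simply writes out the subgradient set of $f_i$ and picks the zero choices for $r\ge t$; the substance is identical.
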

\begin{proof}
All subgradients of $f_i$ have the form
\[  \frac{\text{sign}\big(b - \inner{x}{v_0}\big)}{\sqrt{2}}v_0 + \frac{1}{2\sqrt{k}} \sum_{r=1}^{k} \delta_{i,r} \text{sign}\big(\inner{x}{v_{r-1}} - \inner{x}{v_{r}}\big)(v_{r-1} - v_{r}) \]
where we define $\text{sign}(0) = 0$. Since function $i$ is queried during round $t$, $\delta_{i,t} = 0$, and since $\inner{x}{v_{r-1}} = 0 = \inner{x}{v_{r}}$ for all $r > t$, $\partial f_i(x)$ contains all subgradients of the form
\[ \frac{\text{sign}\big(b - \inner{x}{v_0}\big)}{\sqrt{2}}v_0 + \frac{1}{2\sqrt{k}} \sum_{r=1}^{t-1} \delta_{i,r} \text{sign}\big(\inner{x}{v_{r-1}} - \inner{x}{v_{r}}\big)(v_{r-1} - v_{r}) \]
which is exactly $\partial f_i^t(x)$.
\end{proof}

\begin{lemma} \label{lem:DetLBProxSpan}
%For any $t < k$ and any $x$ such that $x^v \in \spn{v_1, v_2, ..., v_{t'}}$, if function $i$ is queried during round $t$ then $\forall \beta > 0$, $\prox_{f_i^t}(x,\beta) = \prox_{f_i}(x,\beta) \in x^\perp + \spn{v_1, v_2, ..., v_{t}}$
For any $t \leq k$ and any $x$ such that $x^v \in \spn{v_0, v_1, ..., v_{t-1}}$, if function $i$ is queried during round $t$ then $\forall \beta > 0$, $\prox_{f_i^t}(x,\beta) = \prox_{f_i}(x,\beta)$.
\end{lemma}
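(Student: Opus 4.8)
The plan is to introduce the orthogonal projection $T$ that annihilates the directions $v_t,v_{t+1},\dots,v_k$, namely $T(u) = u - \sum_{r=t}^{k}\inner{u}{v_r}v_r$, and to show that $T$ simultaneously (i) fixes the prox point of $f_i^t$ and (ii) converts $f_i$ into $f_i^t$. Since $\set{v_r}$ is orthonormal, $\norm{T(u)}^2 = \norm{u}^2 - \sum_{r=t}^{k}\inner{u}{v_r}^2 \le \norm{u}^2$, so $T$ maps $\mathcal{X}$ into $\mathcal{X}$, and this inequality is strict unless $u$ has no component along any $v_r$ with $r\ge t$.

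First I would record three elementary facts. Since $f_i^t$ depends on $u$ only through $\inner{u}{v_0},\dots,\inner{u}{v_{t-1}}$ and $T$ preserves those inner products, $f_i^t(T(u)) = f_i^t(u)$ for all $u$. Since $f_i = f_i^t + \frac{1}{2\sqrt{k}}\sum_{r=t}^{k}\delta_{i,r}\abs{\inner{\cdot}{v_{r-1}}-\inner{\cdot}{v_r}}$ with nonnegative extra terms, $f_i \ge f_i^t$ pointwise; moreover, evaluating those extra terms at $T(u)$ and using $\delta_{i,t}=0$ (which holds precisely because function $i$ is queried during round $t$) together with $\inner{T(u)}{v_r}=0$ for $r\ge t$, every extra term vanishes, so $f_i(T(u)) = f_i^t(T(u)) = f_i^t(u)$. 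Finally, the hypothesis $x^v\in\spn{v_0,\dots,v_{t-1}}$ gives $\inner{x}{v_r}=0$ for $r\ge t$, hence $T(x)=x$ and $x - T(u) = T(x-u)$.

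Next I would analyze the minimizers. Let $u^\star = \prox_{f_i^t}(x,\beta)$, the unique minimizer over $\mathcal{X}$ of the $\beta$-strongly convex function $P^t(u):=f_i^t(u)+\frac{\beta}{2}\norm{x-u}^2$. Evaluating $P^t$ at the feasible point $T(u^\star)$ gives $P^t(T(u^\star)) = f_i^t(u^\star) + \frac{\beta}{2}\norm{T(x-u^\star)}^2 \le P^t(u^\star)$, so by uniqueness of the minimizer $T(u^\star)=u^\star$; in particular $u^\star$ has no component along $v_t,\dots,v_k$. Now consider $P(u):=f_i(u)+\frac{\beta}{2}\norm{x-u}^2$. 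On one hand, $P(u)\ge P^t(u)\ge P^t(u^\star)$ for all $u\in\mathcal{X}$ since $f_i\ge f_i^t$; on the other hand, $P(u^\star) = f_i(T(u^\star)) + \frac{\beta}{2}\norm{x-u^\star}^2 = f_i^t(u^\star) + \frac{\beta}{2}\norm{x-u^\star}^2 = P^t(u^\star)$. Thus $u^\star$ minimizes the $\beta$-strongly convex $P$ over $\mathcal{X}$, hence is the unique such minimizer, i.e.\ $\prox_{f_i}(x,\beta)=u^\star=\prox_{f_i^t}(x,\beta)$.

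The step I expect to require the most care is handling the norm constraint $\mathcal{X}=\set{u:\norm{u}\le B}$ inside the prox: one cannot simply separate the $\spn{v_0,\dots,v_{t-1}}$ coordinates from the $\spn{v_t,\dots,v_k}$ coordinates, because the ball constraint couples them. The fix is to rely on the contraction property $\norm{T(u)}\le\norm{u}$ (so that $T$ preserves feasibility) combined with strict convexity of the prox objective (so that a non-strict improvement via $T$ forces $T(u^\star)=u^\star$); everything else is bookkeeping with the orthonormal vectors $v_r$ and the vanishing indicator $\delta_{i,t}$.
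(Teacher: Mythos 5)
Your proof is correct, and it takes a genuinely different route from the paper's. The paper proves the lemma by decomposing $u$ into its component along $\spn{v_0,\dots,v_{t-1}}$, its component along $\spn{v_t,\dots,v_k}$, and an orthogonal remainder, then asserting that the prox minimization separates coordinate-by-coordinate so the ``new'' coordinates land at zero and the remaining block reproduces $\prox_{f_i^t}$. You instead introduce the orthogonal projector $T$ annihilating $\spn{v_t,\dots,v_k}$, show it is non-expansive (hence maps $\mathcal{X}$ into $\mathcal{X}$), show it fixes $\prox_{f_i^t}(x,\beta)$ by strict convexity of the prox objective, and show it converts $f_i$ into $f_i^t$ at that point using $\delta_{i,t}=0$; uniqueness of the minimizer of a strongly convex function over a closed convex set then gives the equality directly.

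Your route is the more careful of the two. As you note yourself, the prox in this paper is $\argmin_{u\in\mathcal{X}}$ with $\mathcal{X}$ a Euclidean ball, which is not a product set across the orthogonal decomposition, so the coordinatewise split written in the paper's display ($\argmin_{u^-,u^+}=\argmin_{u^-}+\argmin_{u^+}$) needs an extra justification that the paper does not supply; for instance with $x$ on the boundary of $\mathcal{X}$ and $\beta$ small the block-wise reconstruction can leave the ball. Your projector-plus-uniqueness argument sidesteps this entirely and works verbatim for any closed convex $\mathcal{X}$, at the cost of a little more setup. In short, the two proofs exploit the same underlying structure (the new directions are ``invisible'' to $f_i^t$ and contribute only non-negative terms to $f_i$), but yours closes the constraint-coupling gap explicitly.
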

\begin{proof}
Consider the definition of the prox oracle from equation \ref{eq:prox}
\begin{align*}
\prox_{f_i}(x,\beta) &= \argmin_{u} f_i(u) + \frac{\beta}{2}\norm{x - u}^2 \\
&= \argmin_{u^v, u^\perp} f_i(u^v) + \frac{\beta}{2}\norm{x^v + x^\perp - u^v - u^\perp}^2 \\
&= \argmin_{u^v} f_i(u^v) + \frac{\beta}{2}\norm{x^v - u^v}^2 + \argmin_{u^\perp} \frac{\beta}{2}\norm{x^\perp - u^\perp}^2\\
&= x^\perp + \prox_{f_i}(x^v,\beta)
\end{align*}
Next, we further decompose $x^v = x^- + x^+$ where 
\[ x^- = \sum_{r=0}^{t-1} \inner{x^v}{v_r}v_r\qquad\text{ and }\qquad x^+ = \sum_{r=t}^{k} \inner{x^v}{v_r}v_r \]
Note that $x^+ = 0$ and since function $i$ is queried during round $t$, $\delta_{i,t} = 0$. Therefore,
\begin{align}
\prox_{f_i}(x^v,\beta) &= \argmin_{u^-, u^+} f_i(u^- + u^+) + \frac{\beta}{2}\norm{x^- - u^- -u^+}^2 \label{eq:DetLBProx}\\
&= \argmin_{u^-, u^+} \frac{1}{\sqrt{2}}\abs{b - \inner{u^-}{v_0}} + \frac{1}{2\sqrt{k}} \sum_{r=1}^{t-1} \delta_{i,r} \abs{\inner{u^-}{v_{r-1}} - \inner{u^-}{v_{r}}} \nonumber\\
&\qquad+ \frac{1}{2\sqrt{k}}\sum_{r=t+1}^{k} \delta_{i,r} \abs{\inner{u^+}{v_{r-1}} - \inner{u^+}{v_{r}}} + \frac{\beta}{2}\left(\norm{x^- - u^-}^2 + \norm{u^+}^2\right) \nonumber\\
&= \prox_{f_i^t}(x^v,\beta) \nonumber
\end{align} 
The last equality follows from the fact that that the minimization is completely separable between $u^-$ and $u^+$, allowing us to minimized over each variable separately. The terms containing $u^+$ are non-negative and can be simultaneously equal to $0$ when $u^+ = 0$. Therefore, $\prox_{f_i^t}(x,\beta) = \prox_{f_i}(x,\beta)$.
\end{proof}
These lemmas show that the subgradients and proxs of $f_i^t$ at vectors which are queried during round $t$ are consistent with the subgradients and proxs of $f_i$. This confirms that our construction is sound, and completes the proof.
\end{proof}

%%%%%%%%%%%%%%%%%%%%%%%%%%%%%%%%%%%%%%%%%%%%%
%%%%%%%%%%					DET LIP SC	   	         	    %%%%%%%%%%
%%%%%%%%%%%%%%%%%%%%%%%%%%%%%%%%%%%%%%%%%%%%%
\subsection{Non-smooth and strongly convex components \label{appendix:DetLscLower}}
\DetLscLower*
\begin{proof}
Suppose towards contradiction that the contrary were true, and there is an $A$ which can find a point $\hat{x}$ for which $F(\hat{x}) - F(x^*) < \epsilon$ after at most $o\left( \frac{mL}{\sqrt{\lambda\epsilon}} \right)$ queries to $h_F$. Then $A$ could be used to minimize the sum $\tilde{F}$ of $m$ functions $\tilde{f}_i$, which are convex and $L$-Lipschitz continuous over a domain of $\set{x:\norm{x}\leq B}$ by adding a regularizer. Let
\[ F(x) = \frac{1}{m} \sum_{i=1}^m f_i(x) := \frac{1}{m} \sum_{i=1}^m \tilde{f}_i(x) + \frac{\lambda}{2}\norm{x}^2  \]
Note that $f_i$ is $\lambda$-strongly convex and since $\tilde{f}_i$ is $L$-Lipschitz on the $B$-bounded domain, $f_i$ is $(L + \lambda B)$-Lipschitz continuous on the same domain. Furthermore, by setting $\lambda = \frac{\epsilon}{B^2}$, 
\[ \tilde{F}(x) \leq F(x) \leq \tilde{F}(x) + \frac{\epsilon}{2B^2}\norm{x}^2 \leq \tilde{F}(x) + \frac{\epsilon}{2} \]
By assumption, $A$ can find an $\hat{x}$ such that $F(\hat{x}) - F(x^*) < \frac{\epsilon}{2}$ using $o\left( \frac{m(L + \lambda B)}{\sqrt{\lambda\epsilon}} \right) = o\left( \frac{mLB}{\epsilon} \right)$ queries to $h_F$, and
\[ \frac{\epsilon}{2} > F(\hat{x}) - F(x^*) \geq \tilde{F}(\hat{x}) - \tilde{F}(\tilde{x}^*) - \frac{\epsilon}{2}  \]
Thus $\hat{x}$ is $\epsilon$-suboptimal for $\tilde{F}$. However, this contradicts the conclusion of theorem \ref{thm:DetLBLower} when the parameters of the strongly convex problem correspond to parameters of a non-strongly convex problem to which theorem \ref{thm:DetLBLower} applies. In particular, for any values $L > 0$, $\lambda > 0$, $0 < \epsilon < \frac{L^2}{288\lambda}$, and dimension $d = \mathcal{O}\left( \frac{mL}{\sqrt{\lambda\epsilon}} \right)$ there is a contradiction.
\end{proof}

%%%%%%%%%%%%%%%%%%%%%%%%%%%%%%%%%%%%%%%%%%%%%
%%%%%%%%%%			DET SMOOTH NON-SC	   	     	    %%%%%%%%%%
%%%%%%%%%%%%%%%%%%%%%%%%%%%%%%%%%%%%%%%%%%%%%
\subsection{Smooth and not strongly convex components \label{appendix:DetSBLower}}
\DetSBLower*
\begin{proof}
This proof will be very similar to the proof of theorem \ref{thm:DetLBLower}. Without loss of generality, we can assume that $\gamma = B  = 1$. For a values $a$ and $k$ to be fixed later, we define:
\[ f_i(x) = \frac{1}{8} \left( \delta_{i,1}\left( \inner{x}{v_0}^2 - 2a\inner{x}{v_0} \right)  + \sum_{r=1}^{k} \delta_{i,r} \left(\inner{x}{v_{r-1}} - \inner{x}{v_{r}} \right)^2 +  \delta_{i,k}\inner{x}{v_{k}}^2\right)  \]
We define the orthonormal vectors $v_r \in \mathbb{R}^d$ and indicators $\delta_{i,t} \in \set{0,1}$ as in the proof of theorem \ref{thm:DetLBLower}. That is, at the end of round $t$, we set $\delta_{i,t} = 1$ if the algorithm $A$ does \emph{not} query function $i$ during round $t$ (and zero otherwise) and we construct $v_t$ to be orthogonal to $\set{v_0,...,v_{t-1}}$ as well as every point queried by the algorithm so far. Orthogonalizing the vectors is possible as long as the dimension is at least as large as the number of oracle queries $A$ has made so far plus $t$. As before, we are allowed to construct $v_t$ and $\delta_{i,t}$ in this way as long as the algorithm's execution up until round $t$, and thus our choice of $v_t$ and $\delta_{i,t}$, depends only on $v_r$ and $\delta_{i,r}$ for $r < t$. We enforce this condition by answering the queries during round $t < k$ according to
\[ f_i^t(x) = \frac{1}{8} \left( \delta_{i,1}\left( \inner{x}{v_0}^2 - 2a\inner{x}{v_0} \right)  + \sum_{r=1}^{t-1} \delta_{i,r} \left(\inner{x}{v_{r-1}} - \inner{x}{v_{r}} \right)^2 \right) \]

We will assume for now that this query-answering strategy is self-consistent, and prove it later. This allows us to bound the suboptimality of $F(x)$. Note that since exactly $\lceil\frac{m}{2}\rceil$ functions are queried each round, $\sum_{i=1}^m\delta_{i,r} = \lfloor\frac{m}{2}\rfloor$, so let
\begin{align*}
F^t(x) &= \frac{1}{m}\sum_{i=1}^m f_i^t(x) + \delta_{i,t}\inner{x}{v_{t-1}}^2 \\
&= \frac{\lfloor\frac{m}{2}\rfloor}{8m} \left( \inner{x}{v_0}^2 - 2a\inner{x}{v_0}  + \sum_{r=1}^{t-1} \left(\inner{x}{v_{r-1}} - \inner{x}{v_{r}} \right)^2+ \inner{x}{v_{t-1}}^2 \right) 
\end{align*}
Then if $d = \lceil\frac{m}{\sqrt{\epsilon}}\rceil + k + 1$, and if $x$ is an iterate generated both before $A$ completes round $q := \lfloor\frac{k}{2}\rfloor$ and before it makes $\lceil\frac{m}{\sqrt{\epsilon}}\rceil$ queries to $h_F$, then $\inner{x}{v_r} = 0$ for all $r \geq q$ by construction. Then, for this $x$, $F^q(x) = F^{k+1}(x) = F(x)$. By first order optimality conditions for $F^t$, its optimum $x_t^*$ must satisfy that:
\begin{align*}
2\inner{x_t^*}{v_0} - \inner{x_t^*}{v_1} &= a \\
\inner{x_t^*}{v_{r-1}} - 2\inner{x_t^*}{v_{r}} + \inner{x_t^*}{v_{r+1}} &= 0 \quad\text{for } 1 \leq r \leq t-2\\
\inner{x_t^*}{v_{t-2}} - 2\inner{x_t^*}{v_{t-1}} &= 0
\end{align*}
It is straightforward to confirm that the solution to this system of equations is
\begin{equation*}
x_t^* = a\sum_{r=0}^{t-1} \left(1 - \frac{r + 1}{t+1}\right)v_r
\end{equation*}
that
\[ F^t(x_t^*) = -\frac{a^2\lfloor\frac{m}{2}\rfloor}{8m}\left( 1 - \frac{1}{t+1} \right) \]
and that
\begin{align*}
\norm{x_t^*}^2 &= a^2 \sum_{r=0}^{t-1} \left(1 - \frac{r+1}{t+1}\right)^2 \\
&= a^2 \left( t - \frac{2}{t+1}\sum_{r=0}^{t-1} (r+1) + \frac{1}{(t+1)^2}\sum_{r=0}^{t-1} (r+1)^2 \right)\\
&= a^2\left( t - \frac{2}{t+1}\frac{t(t+1)}{2} + \frac{1}{(t+1)^2}\frac{t(t+1)(2t+1)}{6} \right)\\
&\leq \frac{a^2t}{3}
\end{align*}
Thus, we set $a = \sqrt{\frac{3}{k+1}}$, ensuring $\norm{x_{k+1}^*} = 1$ so that $x_{k+1}^* = x^* \in \mathcal{X}$. Furthermore, for the iterate $x$ made before $q$ rounds of queries,
\begin{align*}
F(x) - F(x^*) &= F^q(x) - F^{k+1}(x_{k+1}^*) \\
&\geq F^q(x_q^*) - F^{k+1}(x_{k+1}^*) \\
&= -\frac{3\lfloor\frac{m}{2}\rfloor}{8m(k+1)}\left(1 - \frac{1}{\lfloor\frac{k}{2}\rfloor+1} \right) + \frac{3\lfloor\frac{m}{2}\rfloor}{8m(k+1)}\left(1 - \frac{1}{k+2} \right) \\
&\geq \frac{1}{32k^2} \\
\end{align*}
where the last inequality holds as long as $k \geq 2$. So, when $\epsilon < \frac{1}{128}$ and we let $k = \lfloor\frac{1}{\sqrt{32\epsilon}}\rfloor$, this ensures that 
\[ F(x) - F(x^*) = F^q(x) - F^{k+1}(x_{k+1}^*) \geq \epsilon \]
and therefore, $A$ must complete at least $q$ rounds or make more than $\lceil\frac{m}{\sqrt{\epsilon}}\rceil$ queries to $h_F$ in order to reach an $\epsilon$-suboptimal point. This implies a lower bound of 
\[ \min\left( \left\lceil\frac{m}{\sqrt{\epsilon}}\right\rceil,\ q\left\lceil\frac{m}{2}\right\rceil \right) \geq \frac{m}{16\sqrt{6\epsilon}} \]

To complete the proof, it remains to show that the gradient and prox of $f_i^t$ is consistent with those of $f_i$ at every time $t$. Since every function operates on the $(k+1)$-dimensional subspace of $\mathbb{R}^d$ spanned by $\set{v_r}$, it will be convenient to decompose vectors into two components: $x = x^v + x^\perp$ where $x^v = \sum_{r=0}^{k}\inner{x}{v_r}v_r$ and $x^\perp = x - x^v$. Note that $f_i^t(x) = f_i^t(x^v)$.

\begin{lemma}
For any $t \leq k$ and any $x$ such that $x^v \in \spn{v_0, v_1, ..., v_{t-1}}$, if function $i$ is queried during round $t$, then $\nabla f_i^t(x) =  \nabla f_i(x)$.
\end{lemma}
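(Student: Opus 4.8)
The plan is to mirror the argument used for the non-smooth case (the preceding lemma and Lemma~\ref{lem:DetLBProxSpan}): write out $\nabla f_i(x)$ explicitly and check that, under the two hypotheses — that $i$ is queried during round $t$, so $\delta_{i,t}=0$, and that $x^v\in\spn{v_0,\dots,v_{t-1}}$, so $\inner{x}{v_r}=0$ for every $r\geq t$ — all terms of $\nabla f_i(x)$ that do not already appear in $\nabla f_i^t(x)$ vanish. First I would note, as in the non-smooth proof, that each $f_i^t$ depends only on $x^v$, so decomposing $x=x^v+x^\perp$ is harmless and it suffices to compare the two gradients at $x^v$.

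Differentiating term by term, using $\nabla\inner{x}{v_r}^2=2\inner{x}{v_r}v_r$ and $\nabla(\inner{x}{v_{r-1}}-\inner{x}{v_r})^2=2(\inner{x}{v_{r-1}}-\inner{x}{v_r})(v_{r-1}-v_r)$, gives
\[
\nabla f_i(x)=\frac14\Big(\delta_{i,1}(\inner{x}{v_0}-a)v_0+\sum_{r=1}^{k}\delta_{i,r}(\inner{x}{v_{r-1}}-\inner{x}{v_r})(v_{r-1}-v_r)+\delta_{i,k}\inner{x}{v_k}v_k\Big),
\]
and the analogous expression for $\nabla f_i^t(x)$ with the sum truncated at $r=t-1$ and no trailing $\delta_{i,k}$ term. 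I would then dispose of the extra terms one by one: the summand at index $r=t$ is killed by $\delta_{i,t}=0$; for every index $r\geq t+1$ we have $r-1\geq t$, so both $\inner{x}{v_{r-1}}$ and $\inner{x}{v_r}$ vanish by the span hypothesis and the summand is zero; and the term $\delta_{i,k}\inner{x}{v_k}v_k$ vanishes because $k\geq t$ forces $\inner{x}{v_k}=0$. The remaining terms — the $\delta_{i,1}$ term and the summands for $r\leq t-1$ — are literally identical in the two expressions, so $\nabla f_i^t(x)=\nabla f_i(x)$.

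There is essentially no obstacle here; the only point requiring a moment's care is the boundary case $t=k$, where one must confirm that both the $r=k$ summand and the $\delta_{i,k}\inner{x}{v_k}^2$ term are accounted for — which they are, since $i$ being queried in round $k$ gives $\delta_{i,k}=0$ outright. Combined with the prox analogue (proven exactly as Lemma~\ref{lem:DetLBProxSpan}, with squared differences replacing absolute differences throughout, so that the prox minimization again separates across the $v^-$ and $v^+$ subspaces and the $v^+$ part is driven to zero), this establishes that answering every round-$t$ query via $f_i^t$ is consistent with the true $f_i$, discharging the self-consistency assumption deferred in the proof of Theorem~\ref{thm:DetSBLower}.
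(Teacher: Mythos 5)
Your proof is correct and follows essentially the same approach as the paper's: write out $\nabla f_i(x)$ term-by-term and use $\delta_{i,t}=0$ together with the span hypothesis (which gives $\inner{x}{v_r}=0$ for $r\geq t$) to kill every term beyond what appears in $\nabla f_i^t(x)$. If anything you are slightly more explicit than the paper, which states only $\delta_{i,t}=0$ before the displayed equality and leaves the annihilation of the $r\geq t+1$ summands and the $\delta_{i,k}\inner{x}{v_k}v_k$ term (via the span hypothesis) implicit.
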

\begin{proof}
Since function $i$ is queried during round $t$, $\delta_{i,t} = 0$ so
\[ \nabla f_i(x) = \frac{1}{4}\left( \delta_{i,1}\left( \inner{x}{v_1}v_0 - 2v_0 \right) + \sum_{r=1}^{t-1} \delta_{i,r} \left( \inner{x}{v_{r-1}} - \inner{x}{v_{r}} \right)(v_{r-1} - v_{r}) \right) = \nabla f_i^t(x) \qedhere\]
\end{proof}

\begin{lemma}
For any $t \leq k$ and any $x$ such that $x^v \in \spn{v_0, v_1, ..., v_{t-1}}$, if function $i$ is queried during round $t$ then $\forall \beta > 0$, $\prox_{f_i^t}(x,\beta) = \prox_{f_i}(x,\beta)$.
\end{lemma}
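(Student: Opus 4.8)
The plan is to mimic the proof of Lemma~\ref{lem:DetLBProxSpan} (the non-smooth prox-consistency lemma) almost verbatim, since the only change is that absolute-difference terms are replaced by squared-difference terms and there is an extra endpoint term $\inner{x}{v_k}^2$. First I would use the same orthogonal decomposition $x = x^v + x^\perp$ with $x^v = \sum_{r=0}^k \inner{x}{v_r} v_r$, and observe that because every function $f_i$ and $f_i^t$ depends only on $x^v$, the prox problem separates: the minimization over the $v$-subspace component decouples from the orthogonal complement, giving $\prox_{f_i}(x,\beta) = x^\perp + \prox_{f_i}(x^v,\beta)$ and likewise for $f_i^t$. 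This reduces everything to showing $\prox_{f_i^t}(x^v,\beta) = \prox_{f_i}(x^v,\beta)$ when $x^v \in \spn{v_0,\dots,v_{t-1}}$.

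Next I would further split $x^v = x^- + x^+$ where $x^- = \sum_{r=0}^{t-1}\inner{x^v}{v_r}v_r$ and $x^+ = \sum_{r=t}^{k}\inner{x^v}{v_r}v_r$, and use the hypothesis that $x^+ = 0$. Writing out $f_i(u)$ for $u = u^- + u^+$ (with $u^-$ supported on $v_0,\dots,v_{t-1}$ and $u^+$ on $v_t,\dots,v_k$) and using that $\delta_{i,t}=0$ since component $i$ is queried in round $t$, I would check that the objective $f_i(u) + \frac{\beta}{2}\norm{x^- - u}^2$ separates additively into a function of $u^-$ (which is exactly $f_i^t(u^-) + \frac{\beta}{2}\norm{x^- - u^-}^2$) plus a function of $u^+$ consisting only of nonnegative squared terms $\bigl(\inner{u^+}{v_{r-1}}-\inner{u^+}{v_r}\bigr)^2$ for $r = t+1,\dots,k$, the endpoint term $\inner{u^+}{v_k}^2$, and $\frac{\beta}{2}\norm{u^+}^2$. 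The crucial point — and the one place the squared construction behaves slightly differently from the absolute-value one — is to confirm that all these $u^+$ terms are simultaneously minimized (to value $0$) at $u^+ = 0$; this is immediate since each is a nonnegative quadratic vanishing at $u^+=0$. Hence the minimizer has $u^+ = 0$ and reduces to $\prox_{f_i^t}$, so the two proxes coincide.

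The only genuine subtlety to watch is the role of the extra $\delta_{i,k}\inner{x}{v_k}^2$ term in \eqref{eq:DetSmoothF}: when $i$ is queried in round $t \le k$ we still need $\delta_{i,k}$ not to interfere, but since that term involves only $v_k$ (part of the $u^+$ block for $t \le k$) and is again a nonnegative quadratic vanishing at $u^+=0$, it causes no problem — it simply joins the other $u^+$ terms. I would also note, exactly as in the non-smooth case, that since $f_i$ is smooth the gradient response is uniquely determined, so consistency of $\nabla f_i^t$ with $\nabla f_i$ (the preceding lemma) together with prox-consistency is all that is needed; there is no freedom of subgradient choice to exploit here, but none is needed. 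I expect no real obstacle — this is a routine adaptation — the main thing is just to carry the separability bookkeeping through carefully with the endpoint term included. The argument concludes, as before, that $\prox_{f_i^t}(x,\beta) = \prox_{f_i}(x,\beta)$, which together with the gradient lemma verifies self-consistency of the query-answering strategy and completes the proof of Theorem~\ref{thm:DetSBLower}.
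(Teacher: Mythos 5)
Your proposal is correct and matches the paper's own proof almost step for step: the same decomposition $x = x^v + x^\perp$ followed by $x^v = x^- + x^+$, the same use of $\delta_{i,t}=0$ to decouple the objective into a $u^-$ part (identical to the $\prox_{f_i^t}$ objective) and a $u^+$ part of nonnegative quadratics vanishing at $u^+=0$, and the same observation that the endpoint term $\delta_{i,k}\inner{u^+}{v_k}^2$ simply joins the $u^+$ block. The paper likewise invokes Lemma~\ref{lem:DetLBProxSpan} for the first stage and only spells out the final separability step, so your bookkeeping is exactly what is intended.
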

\begin{proof}
Up until the last step, this proof is identical to the proof of lemma \ref{lem:DetLBProxSpan}, thus we pick up at \eqref{eq:DetLBProx}:
\begin{align}
\prox_{f_i}(x^v,\beta) &= \argmin_{u^-, u^+} f_i(u^- + u^+) + \frac{\beta}{2}\norm{x^- - u^- -u^+}^2 \nonumber\\
&= \argmin_{u^-, u^+} \frac{1}{8} \bigg(\delta_{i,1}\left( \inner{u^-}{v_0}^2 - 2a\inner{u^-}{v_0} \right)  + \sum_{r=1}^{t-1} \delta_{i,r} \left(\inner{u^-}{v_{r-1} - v_{r}} \right)^2 \nonumber\\
&+  \sum_{r=t+1}^{k} \delta_{i,r} \left(\inner{u^+}{v_{r-1} - v_{r}} \right)^2 + \delta_{i,k}\inner{u^+}{v_{k}}^2 \bigg) + \frac{\beta}{2}\left(\norm{x^- - u^-}^2 + \norm{u^+}^2\right) \nonumber\\
&= \prox_{f_i^t}(x^v,\beta) \nonumber
\end{align}
The final step comes from the fact that the $\argmin$ is separable over $u^-$ and $u^+$, meaning we can minimize the two terms individually. The terms which contain $u^+$ are non-negative and equal to zero when $u^+ = 0$.
\end{proof}
These lemmas show that the gradient and prox of $f_i^t$ at vectors which are queried during round $t$ are consistent with the gradient and prox of $f_i$. This confirms that our construction is sound.

This proves the lower bound for $\epsilon < \frac{\gamma B^2}{128}$, we can extend the same lower bound to $\epsilon \geq \frac{\gamma B^2}{128}$ using the following, very simple construction. Let
\[ f_i(x) = \begin{cases} 0 & \text{if function } i \text{ is queried in the first } m-1 \text{ queries} \\ 2m\epsilon\inner{x}{v} \end{cases} \]
where $v$ is a unit vector that is orthogonal to all of the first $m-1$ queries. This function is trivially $1$-smooth. By construction, the algorithm must make at least $m$ queries to learn the identity of $v$. Until it has done so, any iterate will have objective value zero, while the optimum $F(x^*) = F(v) = -2\epsilon$. Therefore, the algorithm must make at least $m$ queries to reach an $\epsilon$-suboptimal solution. For $\epsilon \geq \frac{\gamma B^2}{128}$
\[ m \geq m\sqrt{\frac{\gamma B^2}{128\epsilon}}  \]
Therefore a lower bound of $\Omega\left(m\sqrt{\frac{\gamma B^2}{\epsilon}} \right)$ applies for any $\epsilon > 0$.
\end{proof}

\paragraph{Remark:}
If make the additional assumption that $F$ is minimized on the interior of $\mathcal{X}$, since $0$ is $\mathcal{O}(\gamma B^2)$-suboptimal, only $0 < \epsilon < \frac{\gamma B^2}{128}$ gives a non-trivial lower bound. This lower bound is shown by the first construction presented in the previous proof.

%%%%%%%%%%%%%%%%%%%%%%%%%%%%%%%%%%%%%%%%%%%%%
%%%%%%%%%%				DET SMOOTH SC	   	     	    %%%%%%%%%%
%%%%%%%%%%%%%%%%%%%%%%%%%%%%%%%%%%%%%%%%%%%%%
\subsection{Smooth and strongly convex components \label{appendix:DetSscLower}}
\DetSscLower*
\begin{proof}
We will prove the theorem for $1$-smooth and $\lambda$-strongly convex components for any $\lambda < \frac{1}{73}$. This can be extended to arbitrary constants $\gamma$ and $\lambda'$ by taking $\lambda = \frac{\lambda'}{\gamma}$.

For any $k$ and for $\zeta$ and $C$ to be defined later, let
\begin{align*}
f_i(x) = \frac{1-\lambda}{8}\bigg( \delta_{i,1}\left(\inner{x}{v_0}^2 -2C\inner{x}{v_0}\right)& + \delta_{i,k}\zeta\inner{x}{v_k}^2 + \sum_{r=1}^k \delta_{i,r}\inner{x}{v_{r-1}-v_r}^2 \bigg) + \frac{\lambda}{2}\norm{x}^2
\end{align*}
where the vectors $v_r$ and indicators $\delta_{i,r}$ are defined in the same way as in the previous proof. This function is just a multiple of the construction in the proof of theorem \ref{thm:DetSBLower} plus the $\lambda\norm{x}^2/2$ term. It is clear that the norm term is uninformative for learning the identity of vectors $v_r$, as the component of the gradients and proxs which is due to that term is simply a scaling of the query point. Thus, for any iterate $x$ generated by $A$ before completing $t$ rounds of optimization, $\inner{x}{v_r} = 0$ for all $r \geq t$ so long as the dimension is greater than the total number of queries made to $h_F$ so far plus $k+1$; a fact which follows directly from the previous proof.

Since exactly $\lceil\frac{m}{2}\rceil$ functions are queried per round, $\sum_{i=1}^m\delta_{i,r} = \lfloor\frac{m}{2}\rfloor$ and thus
\begin{align*}
F(x) &= \frac{\lambda(Q-1)}{8}\left( \inner{x}{v_0}^2 -2C\inner{x}{v_0} + \zeta\inner{x}{v_k}^2 + \sum_{r=1}^k \inner{x}{v_{r-1}-v_r}^2 \right) + \frac{\lambda}{2}\norm{x}^2 
\end{align*}
where 
% \[  Q = \frac{\lfloor \frac{m}{2} \rfloor}{m}(\frac{1}{\lambda} - 1) + 1 = \begin{cases} \frac{1}{2\lambda} + \frac{1}{2} & \text{when } m \text{ is even} \\ \frac{m(1+\lambda)-1+\lambda}{2m\lambda} &\text{when } m \text{ is odd}\end{cases}  \]
\[  Q = \frac{\lfloor \frac{m}{2} \rfloor}{m}(\frac{1}{\lambda} - 1) + 1  \]
By the first order optimality conditions for $F(x)$, its optimum $x^{*}$ must satisfy that:
\begin{align*}
2\frac{Q+1}{Q-1}\inner{x^{*}}{v_0} - \inner{x^{*}}{v_1} &= C \\
\inner{x^{*}}{v_{r-1}} - 2\frac{Q+1}{Q-1}\inner{x^{*}}{v_r} + \inner{x^{*}}{v_{r+1}} &= 0 \\
\left( 1 + \zeta + \frac{4}{Q-1} \right)\inner{x^{*}}{v_k} - \inner{x^{*}}{v_{k-1}} &= 0
\end{align*}
Defining $q := \frac{\sqrt{Q}-1}{\sqrt{Q}+1} < 1$ and setting $\zeta = 1-q$, it is straightforward to confirm that 
\[ x^{*} = C\sum_{r=0}^k q^{r+1}v_r \]
and also that 
\[ F(x^*) = -\frac{\lambda C^2}{8}\left( \sqrt{Q} - 1 \right)^2 \]

Thus, $F(0) - F(x^*) = \frac{\lambda C^2}{8}\left( \sqrt{Q} - 1 \right)^2$, so by choosing $C$ appropriately, we can make the initial suboptimality of our construction take any value $\epsilon_0$. Since $F$ is $\lambda$-strongly convex, for any $x_t$, $F(x_t) - F(x^*) \geq \frac{\lambda}{2}\norm{x_t - x^*}^2$. Let $x_t$ be an iterate which is generated before $t$ rounds of optimization have been completed, implying that $\inner{x_t}{v_r} = 0$ for all $r \geq t$. So
\begin{align*}
\frac{F(x_t)-F(x^*)}{F(0)-F(x^*)} &\geq \frac{\frac{\lambda}{2}\norm{x_t-x^*}^2}{\frac{\lambda C^2}{8}\left( \sqrt{Q} - 1 \right)^2} \\
&\geq \frac{4}{C^2}\frac{C^2\sum_{r=t}^k q^{2r+2} }{\left( \sqrt{Q} - 1 \right)^2} \\
&= \frac{4(q^{2t+2} - q^{2k+4})}{(1-q^2)\left( \sqrt{Q} - 1 \right)^2} \\
&= \frac{(q^{2t} - q^{2k+2})}{\sqrt{Q}} 
\end{align*}
If we set $k+1 = \left\lceil t - \frac{1}{2\log q} \right\rceil$ then
\begin{align*}
\frac{F(x_t)-F(x^*)}{F(0)-F(x^*)} &\geq \frac{(q^{2t} - q^{2k+2})}{\sqrt{Q}}  \\
&\geq \frac{q^{2t}}{2\sqrt{Q}} \\
&= \frac{1}{2\sqrt{Q}}\exp\left( -2t\log\frac{1}{q} \right) \\
&= \frac{1}{2\sqrt{Q}}\exp\left( -2t\log\left( 1+ \frac{2}{\sqrt{Q}-1}\right) \right) \\
&\geq \frac{1}{2\sqrt{Q}}\exp\left(\frac{-4t}{\sqrt{Q}-1} \right) \\
\end{align*}
and when $t = \left\lfloor \frac{\sqrt{Q}-1}{4}\log\frac{\epsilon_0}{2\sqrt{Q}\epsilon} \right\rfloor$
\begin{align*}
\frac{F(x_t)-F(x^*)}{F(0)-F(x^*)} &\geq \frac{\epsilon}{\epsilon_0}
\end{align*}
Therefore, the algorithm must complete at least $t$ rounds of queries before it can reach an $\epsilon$-suboptimal point. 
If $\epsilon_0 > \frac{3\epsilon}{\lambda}$ and $\lambda < \frac{1}{73}$,
since each round includes at least $\frac{m}{2}$ oracle queries, this implies a lower bound of
\[  \frac{m}{2} \left\lfloor \frac{\sqrt{Q}-1}{4}\log\frac{\epsilon_0}{2\sqrt{Q}\epsilon} \right\rfloor \geq \frac{m}{40\sqrt{\lambda}}\log\frac{\epsilon_0}{2\sqrt{Q}\epsilon} \geq \frac{m}{40\sqrt{\lambda}}\log\frac{\sqrt{\lambda}\epsilon_0}{2\epsilon} \]
queries to $h_F$ in order to reach an $\epsilon$-suboptimal point.
\end{proof}

\section{Lower bounds for randomized algorithms}
%%%%%%%%%%%%%%%%%%%%%%%%%%%%%%%%%%%%%%%%%%%%%%%%%%%%%%%%%%%%%%%%%
%%%%%%%%%%				AUXILIARY LEMMAS	   	       %%%%%%%%%%
%%%%%%%%%%%%%%%%%%%%%%%%%%%%%%%%%%%%%%%%%%%%%%%%%%%%%%%%%%%%%%%%%
We thank Yair Carmon, John Duchi, Oliver Hinder, and Aaron Sidford for pointing out an issue with the original proofs in this section. We have since modified this section in a manner resembling \cite{Carmon17,woodworth2017lower}.

To prove the deterministic lower bounds, we constructed vectors $v_r$ adversarially,
orthogonalizing them to queries made by the algorithm. In the randomized setting, this
is impossible, as we cannot anticipate query points. Our solution was to instead draw
a set of ``important directions'' $v_{i,r}$ randomly in high dimensions. The intuition is that
a given vector, in this case the query made by the algorithm, will have a very small
inner product with a random unit vector with high probability if the dimension is large enough. 

Using this fact, we construct helper functions $\psi_c$ and $\phi_c$ to replace the 
absolute and squared difference functions used in the deterministic lower bounds. These
functions are both flat at 0 on the interval $[-c, c]$, meaning that the algorithm's query
needs to have a significant inner product with $v_{i,r}$ before the oracle needs to give 
that vector away as a gradient or prox. We will show that each of our constructions satisfy the following property:
% \begin{restatable}{property}{keyprop}\label{keyprop}
% For all $i$, all $t \leq k$ and $x$ such that $\forall r \geq t$ $\abs{\inner{x}{v_{i,r}}} \leq \frac{c}{2}$, if $t$ is odd, then
% \[ \partial f_{i,1}(x) \subseteq \spn{x,v_{i,0},...,v_{i,t-1}}\qquad\qquad\text{and}\qquad\qquad \partial f_{i,2}(x) \subseteq \spn{x,v_{i,0},...,v_{i,t}} \] 
% \[ \prox_{f_{i,1}}(x,\beta) \in \spn{x,v_{i,0},...,v_{i,t-1}}\quad\quad\text{and}\quad\quad \prox_{f_{i,2}}(x,\beta) \in \spn{x,v_{i,0},...,v_{i,t}} \]
% and if $t$ is even, then
% \[ \partial f_{i,1}(x) \subseteq \spn{x,v_{i,0},...,v_{i,t}}\qquad\qquad\text{and}\qquad\qquad \partial f_{i,2}(x) \subseteq \spn{x,v_{i,0},...,v_{i,t-1}} \]
% \[ \prox_{f_{i,1}}(x,\beta) \in \spn{x,v_{i,0},...,v_{i,t}}\quad\quad\text{and}\quad\quad \prox_{f_{i,2}}(x,\beta) \in \spn{x,v_{i,0},...,v_{i,t-1}} \]
% \end{restatable}
\begin{restatable}{property}{keyprop}\label{keyprop}
For all $i$, all $t \leq k$ and $x$ such that $\forall r \geq t$ $\abs{\inner{x}{v_{i,r}}} \leq \frac{c}{2}$,
$\exists g_1 \in \partial f_{i,1}(x)$ and $\exists g_2 \in \partial f_{i,2}(x)$ such that
\[ g_1, g_2, \prox_{f_{i,1}}(x,\beta), \prox_{f_{i,2}}(x,\beta) \in \spn{x,v_{i,0},...,v_{i,t}}. \]
and such that $g_1$, $g_2$, $\prox_{f_{i,1}}(x,\beta)$, and $\prox_{f_{i,2}}(x,\beta)$ are all a deterministic function of $x^{(t)}$ and $\set{v_{i,r} : r < t}$.
\end{restatable}
In other words, when $x$ has a small inner product with $v_{i,r}$ for all $r \geq t$, then querying either $f_{i,1}$ or $f_{i,2}$ at $x$ will reveal at most $v_{i,t}$. Our bounds on the complexity of optimizing our functions are based on the principle that the algorithm can only learn one $v_{i,r}$ per query, so we need to control the probability that the premises of this property hold for \emph{every} query made by the algorithm. In this section, we will bound how large the dimensionality of the problem needs to be to ensure that with high probability, only one vector is revealed to the algorithm by each oracle response. 

For $i \in \set{1,2,\dots,m/2}$, we will define pairs of component functions $f_{i,1}$ and $f_{i,2}$ that satisfy Property \ref{keyprop}. If $m$ is odd, we set $f_m \equiv 0$ and lose only a factor of $(m-1)/m$ in our lower bound, therefore, we proceed assuming $m$ is even.

The function instances $f_{i,1}$ and $f_{i,2}$ are defined in terms of random vectors $\set{v_{i,r}}_{r=1}^k$. Specifically, let $\set{v_{i,r}}_{i\in[m/2], r \in [k]}$ be \emph{orthonormal} vectors drawn uniformly at random from the set of orthonormal vectors in $\mathbb{R}^d$ where $d \geq mk/2$.

The proofs will proceed by showing that optimizing $F(x) = \frac{1}{m}\sum_{i=1}^{m/2} f_{i,1}(x) + f_{i,2}(x)$ is equivalent to finding an $x$ with a significant inner product with a large number of the vectors $v_{i,r}$. This property depends on the specific constructions and will be discussed in detail later.

First, we will show generally that when the dimension is sufficiently large, Property \ref{keyprop} ensures each oracle access will ``reveal'' at most a single vector $v_{i,r}$ with high probability over the randomness in the draw of the vectors $v_{i,r}$. As a consequence of this fact, $\Omega(mk)$ accesses will be needed to optimize $F$.

We will prove the lower bound on the number of oracle accesses needed for an arbitrary \emph{deterministic} optimization algorithm to optimize our random finite sum instance, and then apply Yao's minimax principle in order to prove the lower bound for randomized optimization algorithms. 

We denote the $n^{th}$ query made by the algorithm $q^{(n)} = \left(i^{(n)}, j^{(n)}, x^{(n)}, \beta^{(n)}\right)$, which is a query to function $f_{i^{(n)}, j^{(n)}}$ at the point $x^{(n)}$ with the prox parameter $\beta^{(n)}$ (if applicable). For now, we require that $\exists B \geq 1$ s.t. $\norm{x^{(n)}} \leq B$ for all $n$; this assumption will be removed in the individual lower bound proofs. Recalling that we are considering for now deterministic optimization algorithms, the $n^{th}$ query is a function of the previous $n-1$ queries and the oracle's responses to those queries.

Let $\#(i,n) = \abs{\set{t < n : i^{(t)} = i}}$ denote the number of times that the functions $f_{i,1}$ or $f_{i,2}$ were queried before time $n$. Define $K_n = \set{v_{i,r} : i \in [m/2], r \leq \#(i,n)}$. These are the set of vectors $v_{i,r}$ that are supposed to be ``known'' to the optimization algorithm at time $n$. Also, let $U_n = \set{v_{i,r} : i \in [m/2], r > \#(i,n)}$. These are the set of vectors that are supposed to be ``unknown'' to the algorithm. 

Let $X_n = \set{x^{(1)},\dots,x^{(n)}}$ be the set of query points up to time $n$. 

Let $S_n = \spn{X_{n-1} \cup K_{n}}$, and let $S_n^\perp$ be its orthogonal complement. Let $P_n w$ be the orthogonal projection of a vector $w$ onto the subspace $S_n$, and let $P_n^\perp w$ be its orthogonal projection onto $S_n^\perp$.

% Let $t^{(n)} = \abs{\set{t < n : i^{(t)} = i^{(n)}}}$ be the number of times that $f_{i^{(n)}, 1}$ or $f_{i^{(n)}, 2}$ has been queried before time $n$. Define $K_n = \set{v_{i^{(n)}, r} : r < t^{(n)}}$. These are the vectors $v_{i^{(n)}, r}$ that are supposed to be ``known'' to the optimization algorithm before time $n$. Also, let $U_n = \set{v_{i^{(n)},r} : r \geq t^{(n)}}$. This is the set of vectors $v_{i^{(n)},r}$ which are supposed to be ``unknown'' to the algorithm before time $n$.

% Let $X_n = \set{x^{(t)} : t < n,\ i^{(t)} = i^{(n)}}$ be the set of queries made to $f_{i^{(n)},1}$ and $f_{i^{(n)},2}$ before the $n^{\textrm{th}}$ query.

% Let $S_n = \spn{X_n \cup K_n}$. Let $S_n^\perp$ be the orthogonal complement of this $S_n$. Let $P_n w$ be the orthogonal projection of a vector $w$ onto the subspace $S_n$, and let $P_n^\perp w$ be its orthogonal projection onto $S_n^\perp$.

We would like to show that the following event occurs with high probability over the draw of the vectors $v_{i,r}$:
\begin{equation}
E := \left[ \forall n\leq N\ \forall v \in U_n \ \abs{\inner{x^{(n)}}{v}} \leq \frac{c}{2} \right]
\end{equation}
Here, $c \leq 1/\sqrt{k}$ is a constant that will be determined later. This event indicate that the premises of Property \ref{keyprop} are satisfied, which will be used to prove the lower bounds in the next sections.

Define the following ``good'' event:
\begin{equation}
G_n = \left[ \forall v \in U_n\ \abs{\inner{\normalized{P^\perp_{n} x^{(n)}}}{ v}} < \alpha \right]
\end{equation}
where $\alpha = \frac{c}{2B(1 + \sqrt{2N})}$ for some $c \leq \frac{1}{\sqrt{N}}$. The events $G_n$ are useful because:
\begin{lemma} \label{lem:keyrand1}
$\bigcap_{n=1}^N G_n \implies E$.
\end{lemma}
\begin{proof}
This proof closely follows \cite[Lemma 9]{Carmon17} and \cite[Lemma 1]{woodworth2017lower}.

Let $G_{\leq n}$ denote $\bigcap_{t=1}^n G_t$. We will establish that for each $n \leq N$, $G_{\leq n} \implies \forall v \in U_n$ $\abs{\inner{x^{(t)}}{v}} \leq \frac{c}{2}$. 
To begin, we rewrite 
\begin{align}
\abs{\inner{x^{(n)}}{v}} 
&\leq B\abs{\inner{\frac{x^{(n)}}{\norm{x^{(n)}}}}{P_n v}} + B\abs{\inner{\frac{x^{(n)}}{\norm{x^{(n)}}}}{P_n^\perp v}} \\
&\leq B\norm{P_n v} + B\abs{\inner{\frac{P_n^\perp x^{(n)}}{\norm{x^{(n)}}}}{v}} \\
&\leq B\norm{P_n v} + B\abs{\inner{\frac{P_n^\perp x^{(n)}}{\norm{P_n^\perp x^{(n)}}}}{v}} \\
&\leq B\norm{P_n v} + B\alpha \label{eq:complicated-lemma-proof-original-equation}
\end{align}
First, we decomposed $v = P_n v + P_n^\perp v$ and used $\norm{x^{(t)}} \leq B$. Next, we used the Cauchy-Schwarz inequality and the self-adjointness of $P_n^\perp$. Finally, we used the non-expansivity of $P_n^\perp$ and applied the definition of $G_n$.

In order to bound the first term in \eqref{eq:complicated-lemma-proof-original-equation}, we will prove by induction that for $n \leq N$ and $v \in U_n$, $G_{<n} \implies \norm{P_n v}^2 \leq 2(n-1)\alpha^2$. The base case $n=1$ is trivial as $P_1 v$ is the projection of $v$ onto $S_1 = \varnothing$. 

For the inductive step, fix $n \leq N$ and $v \in U_n$, and assume the statement holds for $t < n$. Let $\hat{P}_n$ project onto $\spn{X_{n} \cup K_n}$ (this includes $x^{(n)}$ in contrast with $P_n$), and let $\hat{P}_n^\perp$ project onto the orthogonal subspace. 

Denote $v^{(t)} = v_{i^{(t)}, \#(i^{(t)},t)+1} \in U_{t} \cap K_{t+1}$, which is the vector that ``becomes known'' after time $t$. By definition, the set 
\begin{equation}
X_{n-1} \cup K_n = \set{x^{(1)},\dots, x^{(n-1)}} \cup \set{v^{(1)},\dots,v^{(n-1)}}
\end{equation}
spans $S_n$. Therefore, the Gram-Schmidt vectors
\begin{equation}
    \set{\normalized{P_t^\perp x^{(t)}}: t < n}
    \cup
    \set{\normalized{\hat{P}_t^\perp v^{(t)}}: t < n}
\end{equation}
form an orthonormal basis for $S_n$ (after ignoring any zero vectors that may arise from the projection).
We now write $\norm{P_n v}^2$ in terms of this orthonormal basis:
\begin{align}
\norm{P_n v}^2 
&= 
\sum_{t < n}\inner{\normalized{P_t^\perp x^{(t)}}}{v}^2
+
\sum_{t < n} \inner{\normalized{\hat{P}_t^\perp v^{(t)}}}{v}^2 \\
&\leq (n-1)\alpha^2 
+ \sum_{t < n}\frac{1}{\norm{\hat{P}_t^\perp v^{(t)}}^2} \inner{\hat{P}_t^\perp v^{(t)}}{v}^2\label{eq:second-term-bound-rand-algs-lemma}
\end{align}
We now bound the second term of \eqref{eq:second-term-bound-rand-algs-lemma}. Note that $v^{(t)} \not\in U_n$ is orthogonal to $v \in U_n$. Consider
\begin{align}
\abs{\inner{\hat{P}_t^\perp v^{(t)}}{v}}
&= \abs{\inner{v^{(t)}}{v} - \inner{\hat{P}_t v^{(t)}}{v}} \\
&= \abs{0 - \inner{P_t v^{(t)}}{v} - \inner{\inner{\normalized{P_t^\perp x^{(t)}}}{v^{(t)}}\normalized{P_t^\perp x^{(t)}}}{v}} \\
&\leq \abs{\inner{P_t v^{(t)}}{P_t v}} + \abs{\inner{\normalized{P_t^\perp x^{(t)}}}{v^{(t)}}}\abs{\inner{\normalized{P_t^\perp x^{(t)}}}{v}} \\
&\leq \norm{P_t v^{(t)}}\norm{P_t v} + \abs{\inner{\normalized{P_t^\perp x^{(t)}}}{v^{(t)}}}\abs{\inner{\normalized{P_t^\perp x^{(t)}}}{v}} 
\end{align}
Here, we used the triangle and Cauchy-Schwarz inequalities. Since $v^{(t)},v \in U_n$, by the inductive hypothesis
\begin{equation}
    \norm{P_t v^{(t)}}\norm{P_t v} 
    \leq 
    \sqrt{2(t-1)\alpha^2}\cdot\sqrt{2(t-1)\alpha^2}
    =
    2(t-1)\alpha^2
\end{equation}
and $G_{t}$ ensures
\begin{equation}
    \abs{\inner{\normalized{P_t^\perp x^{(t)}}}{v^{(t)}}}\abs{\inner{\normalized{P_t^\perp x^{(t)}}}{v}}
    \leq
    \alpha^2
\end{equation}
Thus, we conclude
\begin{equation}\label{eq:keylemma1-first-part}
\abs{\inner{\hat{P}_t^\perp v^{(t)}}{v}}
\leq (2t - 1)\alpha^2
\end{equation}

Furthermore, looking at the denominator of \eqref{eq:second-term-bound-rand-algs-lemma}
\begin{align}
\norm{\hat{P}_t^\perp v^{(t)}}^2 
&= \inner{\hat{P}_t^\perp v^{(t)}}{\hat{P}_t^\perp v^{(t)}} \\
&= \inner{\hat{P}_t^\perp v^{(t)}}{v^{(t)}} \\
&= \inner{v^{(t)}}{v^{(t)}} - \inner{\hat{P}_t v^{(t)}}{v^{(t)}} \\
&= 1 - \norm{P_t v^{(t)}}^2 - \inner{\normalized{P_t^\perp x^{(t)}}}{v^{(t)}}^2 \\
&\geq 1 - 2(t-1)\alpha^2 - \alpha^2
\end{align}
The inequality uses the inductive hypothesis and $G_t \impliedby G_{< n}$. Since $c \leq 1/\sqrt{N}$ and $B \geq 1$, $\alpha = \frac{c}{2B\parens{1 + \sqrt{2N}}} \leq \frac{1}{\sqrt{4N-2}}$, so this quantity is at least $1/2$.

Combining this and \eqref{eq:keylemma1-first-part} with \eqref{eq:second-term-bound-rand-algs-lemma}, we conclude
\begin{align}
\norm{P_n v}^2 
&\leq (n-1)\alpha^2 
+ \sum_{t < n}\frac{1}{\norm{\hat{P}_t^\perp v^{(t)}}^2} \inner{\hat{P}_t^\perp v^{(t)}}{v}^2 \\
&\leq (n-1)\alpha^2 
+ \sum_{t < n}2 (2t - 1)^2\alpha^4 \\
&= (n-1)\alpha^2 + \frac{2}{3}(n-1)(4(n-1)^2 - 1)\alpha^4 \\
&\leq (n-1)\alpha^2\parens{1 + 4n^2\alpha^2} \\
&= (n-1)\alpha^2\parens{1 + \frac{4n^2c^2}{4B^2\parens{1 + \sqrt{2N}}^2}} \\
&\leq (n-1)\alpha^2\parens{1 + \frac{N^2c^2}{1+2N}} \\
&\leq (n-1)\alpha^2\parens{1 + Nc^2} \\
&\leq 2(n-1)\alpha^2
\end{align}
We conclude that for all $n \leq N$ and $v \in U_n$, $G_{<n} \implies \norm{P_n v}^2 \leq 2(n-1)\alpha^2$.

Finally, we return to \eqref{eq:complicated-lemma-proof-original-equation} and conclude
\begin{align}
\abs{\inner{x^{(n)}}{v}} 
&\leq B\norm{P_n v} + B\alpha \\
&\leq B\sqrt{2(n-1)\alpha^2} + B\alpha \\
&< B\alpha\parens{\sqrt{2N} + 1} \\
&= \frac{Bc}{2B\parens{1 + \sqrt{2N}}}\parens{\sqrt{2N} + 1} \\
&= \frac{c}{2}
\end{align}
Since this holds for any $n \leq N$ and $v \in U_n$, we conclude $G_{\leq N} \implies E$.
\end{proof}

With Lemma \ref{lem:keyrand1} having been proven, we now observe the following corollary:
\begin{corollary}\label{cor:keyrand2}
For any $n \leq N$, let $o^{(n)}$ be the output of the gradient or prox oracle at time $n$. Then $G_{\leq n} \implies o^{(n)} \in \spn{\set{x^{(n)}} \cup K_{n+1}}$. 
\end{corollary}
\begin{proof}
By Lemma \ref{lem:keyrand1}, $G_{\leq n}$ implies $\forall r > \#(i^{(n)},n)$ $\abs{\inner{x^{(n)}}{v_{i^{(n)},r}}} \leq \frac{c}{2}$. Therefore, Property \ref{keyprop} immediately completes the proof.
\end{proof}

By Lemma \ref{lem:keyrand1} and the chain rule of probability, 
\begin{equation}
    \mathbb{P}[E] \geq \mathbb{P}\left[ G_{\leq N} \right] = \prod_{n=1}^N \mathbb{P}\left[ G_{n} \middle| G_{<n} \right]
\end{equation}
We address a single term in the product using the following lemma:
\begin{lemma}\label{lem:keyrand3}
For any $n \leq N$
\[
\mathbb{P}\left[ G_{n} \middle| G_{<n} \right]
\geq
1 - \frac{mk}{2}\exp\parens{-\frac{\alpha^2\parens{d - 2n + 2}}{2}}
\]
\end{lemma}
\begin{proof}
This proof closely follows \cite[Lemma 11]{Carmon17} and \cite[Lemma 3]{woodworth2017lower}

% We will begin by showing that for every $v \in U_n$, the marginal distribution of the vector $\normalized{P_n^\perp v}$ conditioned on $G_{<t}$ and $K_n$ is invariant to rotations that preserve $S_n$.

Fix $n \leq N$. We will start by proving the density $p_{U_n}(U_n | G_{<n}, K_n)$ is invariant to a rotations that preserve $S_n$. To that end, let $R$ be any rotation such that for all $w \in S_n$ $Rw = w$.  Then
\begin{gather}
p_{U_n}(U_n | G_{<n}, K_n) 
= \frac{\mathbb{P}\parens{G_{<n} \middle| U_n, K_n}p_{U_n, K_n}(U_n, K_n)}{\mathbb{P}\parens{G_{<n} \middle| K_n}p_{K_n}(K_n)}
\\
p_{U_n}(RU_n | G_{<n}, K_n) 
= \frac{\mathbb{P}\parens{G_{<n} \middle| RU_n, K_n}p_{U_n, K_n}(RU_n, K_n)}{\mathbb{P}\parens{G_{<n} \middle| K_n}p_{K_n}(K_n)}
\end{gather}

These expressions have the same denominator, so we will prove that $\mathbb{P}\parens{G_{<n} \middle| RU_n, K_n} = \mathbb{P}\parens{G_{<n} \middle| U_n, K_n}$ and $p_{U_n, K_n}(RU_n, K_n) = p_{U_n, K_n}(U_n, K_n)$. 

Beginning with the latter, $U_n \cup K_n$ is distributed uniformly on the set of orthonormal vectors in $\mathbb{R}^d$, therefore, as long as $RU_n \cup K_n$ is an orthonormal set, the densities are equal. The set $K_n$ is orthonormal by definition. The set $RK_n$ is orthonormal because for any $u_1,u_2 \in U_n$
\begin{gather}
\inner{Ru_1}{Ru_1} = \inner{R^\top R u_1}{u_1} = \inner{u_1}{u_1} = 1 \\
\inner{Ru_1}{Ru_2} = \inner{R^\top R u_1}{u_2} = \inner{u_1}{u_2} = 0
\end{gather}
Finally, for any $u \in U_n$, $k \in K_n$
\begin{equation}
\inner{Ru}{k} = \inner{Ru}{Rk} = \inner{R^\top R u}{k} = \inner{u}{k} = 0
\end{equation}
Therefore, $p_{U_n, K_n}(RU_n, K_n) = p_{U_n, K_n}(U_n, K_n)$.

Recall that we need only consider \emph{deterministic} optimization algorithms for the moment. Therefore, the algorithm's sequence of iterates are a deterministic function of the problem instance, which is determined by $U_n \cup K_n$. Therefore, the event $G_{<n}$ is determined by $U_n, K_n$ and $\mathbb{P}\parens{G_{<n} \middle| U_n, K_n}, \mathbb{P}\parens{G_{<n} \middle| RU_n, K_n} \in \set{0,1}$.  Let $x^{(1)},\dots,x^{(n)}$ be the algorithm's sequence of iterates for the problem instance determined by $U_n \cup K_n$, and let $x'^{(1)},\dots,x'^{(n)}$ be the algorithm's sequence of iterates for the problem instance determined by $RU_n \cup K_n$. 

We will prove by induction that for all $t \leq n$, $\mathbb{P}\parens{G_{<t} \middle| U_t, K_t} = \mathbb{P}\parens{G_{<t} \middle| RU_t, K_t}$ and if these probabilities are one, then $x^{(t)} = x'^{(t)}$. 

As the base case, consider $t = 2$. Since $G_{<2} = G_1$ depends only on the vectors $U_2 \cup K_2$ and the first iterate $x^{(1)}$, which is determined before making any oracle accesses and is thus independent of the vectors $U_2 \cup K_2$. Consequently, 
\begin{equation}
    \mathbb{P}\parens{G_{<2} \middle| U_2, K_2} = \mathbb{P}\parens{G_{1}} = \mathbb{P}\parens{G_{<2} \middle| RU_2, K_2}
\end{equation}
Furthermore, by Corollary \ref{cor:keyrand2}, if $G_{<2} = G_1$ holds, the first oracle response is in the span of the first query $x^{(1)} = x'^{(1)} \in X_2 \subseteq X_n$ and $v_{i^{(1)},0} \in K_2 \subseteq K_n$, neither of which is affected by the rotation $R$. Therefore, the second queries, which are a deterministic function of the identical first queries and oracle responses, are equal $x^{(2)} = x'^{(2)}$.

For the inductive step, suppose $\mathbb{P}\parens{G_{<t} \middle| U_t, K_t} = \mathbb{P}\parens{G_{<t} \middle| RU_t, K_t}$ for all $t < n$, and that if these probabilities are one, then $x^{(t)} = x'^{(t)}$.
By the inductive hypothesis, $\mathbb{P}\parens{G_{<n-1} \middle| U_{n-1}, K_{n-1}} = \mathbb{P}\parens{G_{<n-1} \middle| RU_{n-1} K_{n-1}}$. If both probabilities are zero, then $\mathbb{P}\parens{G_{<n} \middle| U_{n}, K_{n}} = 0 = \mathbb{P}\parens{G_{<n} \middle| RU_{n}, K_{n}}$ and we are finished, because $G_{<n} \subseteq G_{<n-1}$. We continue assuming these probabilites are both one.

Again by the inductive hypothesis, if $G_{<n-1}$ holds then $x^{(n-1)} = x'^{(n-1)}$. Thus, the projections $P_{n-1}$ and $P'_{n-1}$ (the projection that arises when $\set{v_{i,r}} = RU_n \cup K_n$) are equal because all of the first $n-1$ queries are identical, and the rotation $R$ does not affect $K_{n-1}$. Consequently,
\begin{equation}
\abs{\inner{\normalized{P'^\perp_{n-1} x'^{(n-1)}}}{Rv}} = \abs{\inner{\frac{R^\top P^\perp_{n-1} x^{(n-1)}}{\norm{P^\perp_{n-1} x^{(n-1)}}}}{v}} =  \abs{\inner{\normalized{P^\perp_{n-1} x^{(n-1)}}}{v}}
\end{equation}
and we conclude $\mathbb{P}\parens{G_{<n} \middle| U_n, K_n} = 1 \iff  \mathbb{P}\parens{G_{<n} \middle| RU_n, K_n} = 1$. Finally, if $G_{<n}$ holds, then by Corollary \ref{cor:keyrand2}, all oracle responses before time $n$ are determined by $\set{x^{(1)} = x'^{(1)},\dots, x^{(n-1)} = x'^{(n-1)}} \cup K_n$, which is unaffected by $R$, so $x^{(n)} = x'^{(n)}$. 

This establishes that $p_{U_n}(U_n | G_{<n}, K_n) = p_{U_n}(RU_n | G_{<n}, K_n)$ and thus the distribution of $U_n$ is invariant to rotations $R$ that preserve $S_n$.

As a consequence, for any $v \in U_n$, $P_n^\perp v$ and $P_n^\perp Rv$ have the same distribution. Since $R$ preserves $S_n$ and length, we conclude $P_n^\perp v$ and $R P_n^\perp v$ also have the same distribution, and thus $\normalized{P_n^\perp v}$ is distributed uniformly on the unit sphere in $S_n^\perp$ conditional on $G_{<n}$ and $K_n$.

We are now ready to lower bound 
\begin{equation}
    \mathbb{P}\left[G_n \middle| G_{<n} \right]
    = \mathbb{E}_{K_n}\left[ \mathbb{P}\left[G_n \middle| G_{<n}, K_n \right] \right]
    \geq \inf_{K_n} \mathbb{P}\left[G_n \middle| G_{<n}, K_n \right]
\end{equation}
For any arbitrary $K_n$,
\begin{align}
\mathbb{P}\left[G_n \middle| G_{<n}, K_n \right]
&= \mathbb{P}\left[\forall v \in U_n \abs{\inner{\normalized{P_n^\perp x^{(n)}}}{v}} \leq \alpha \middle| G_{<n}, K_n \right] \\
&\geq 1 - \sum_{v \in U_n} \mathbb{P}\left[\abs{\inner{\normalized{P_n^\perp x^{(n)}}}{v}} > \alpha \middle| G_{<n}, K_n \right] \\
&\geq 1 - \sum_{v \in U_n} \mathbb{P}\left[\abs{\inner{\normalized{P_n^\perp x^{(n)}}}{\normalized{P_n^\perp v}}} > \alpha \middle| G_{<n}, K_n \right] \label{eq:inner-products-referenced-lem3}
\end{align}
For each inner product \eqref{eq:inner-products-referenced-lem3}, the first term is a fixed quantity conditioned on $G_{<n}$ and $K_n$. The conditional distribution of the second vector, $\normalized{P_n^\perp v}$, as argued above, is uniform on $S_{n}^\perp$. Thus, each term of the sum is the inner product of a fixed unit vector and a uniformly random unit vector in $d - 2(n-1)$ dimensions. 

This probability that the inner product is at least $\alpha$ is proportional to the surface area of the ``end caps'' of a unit sphere lying above and below circles of radius $r = \sqrt{1-\alpha^2}$. This surface area, in turn, is less than the surface area of a sphere of radius $r$. Therefore, for a given $v \in U_n$
\begin{align}
\mathbb{P}\left[\abs{\inner{\normalized{P_n^\perp x^{(n)}}}{\normalized{P_n^\perp v}}} > \alpha \middle| G_{<n}, K_n \right]
&\leq \frac{r^{d - 2n + 2}}{1^{d - 2n + 2}} \\
&= \parens{1-\alpha^2}^{\frac{d - 2n + 2}{2}} \\
&\leq \exp\parens{-\frac{\alpha^2\parens{d - 2n + 2}}{2}}
\end{align}
We conclude that
\begin{align}
\mathbb{P}\left[G_n \middle| G_{<n}, K_n \right]
&\geq 1 - \sum_{v \in U_n}\exp\parens{-\frac{\alpha^2\parens{d - 2n + 2}}{2}} \\
&\geq 1 - \abs{U_n}\exp\parens{-\frac{\alpha^2\parens{d - 2n + 2}}{2}} \\
&\geq 1 - \frac{mk}{2}\exp\parens{-\frac{\alpha^2\parens{d - 2n + 2}}{2}}
\end{align}
This held for an arbitrary $K_n$, thus
\begin{equation}
    \mathbb{P}\left[G_n \middle| G_{<n} \right]\geq 1 - \frac{mk}{2}\exp\parens{-\frac{\alpha^2\parens{d - 2n + 2}}{2}}
\end{equation}
\end{proof}

\begin{lemma}\label{lem:keyrand4}
For any $d \geq 2N + \frac{2}{\alpha^2}\log\parens{2mkN}$
\[
    \mathbb{P}[E] \geq \frac{3}{4}
\]
\end{lemma}
\begin{proof}
By Lemma \ref{lem:keyrand3}, for all $n \leq N$
\begin{equation}
    \mathbb{P}\left[G_n \middle| G_{<n} \right]\geq 1 - \frac{mk}{2}\exp\parens{-\frac{\alpha^2\parens{d - 2n + 2}}{2}}
\end{equation}
Thus,
\begin{align}
\mathbb{P}[E] 
&\geq \mathbb{P}\left[ G_{\leq N} \right]
= \prod_{n\leq N} \mathbb{P}\left[ G_{n} \middle| G_{<n} \right] \\
&\geq \prod_{n\leq N} 1 - \frac{mk}{2}\exp\parens{-\frac{\alpha^2\parens{d - 2n + 2}}{2}} \\
&\geq \parens{1 - \frac{mk}{2}\exp\parens{-\frac{\alpha^2\parens{d - 2N + 2}}{2}}}^N \\
&\geq 1 - \frac{mkN}{2}\exp\parens{-\frac{\alpha^2\parens{d - 2N + 2}}{2}} \\
&\geq 1 - \frac{mkN}{2}\exp\parens{-\log\parens{2mkN}} \\
&\geq 1 - \frac{1}{4}
\end{align}
\end{proof}

Together, Lemma \ref{lem:keyrand4} and Property \ref{keyprop} allow us to ensure that any algorithm can only learn one important vector per query with high probability as long as the dimension is large enough. What is left is to show that Property \ref{keyprop} holds for each of our constructions and to bound the suboptimality of any iterate that has small inner product with the vectors in $U_n$.

%%%%%%%%%%%%%%%%%%%%%%%%%%%%%%%%%%%%%%%%%%%%%%%%%%%%%%%%%%%%%%%%%
%%%%%%%%%%				RAND LIP NON-SC	   	     	   %%%%%%%%%%
%%%%%%%%%%%%%%%%%%%%%%%%%%%%%%%%%%%%%%%%%%%%%%%%%%%%%%%%%%%%%%%%%
\subsection{Non-smooth and not strongly convex components \label{appendix:RandLBLower}}
We first consider the Lipschitz and non-strongly convex setting and prove theorem 5:
\RandLBLower*
Without loss of generality, we let $L = B = 1$. As shown in Equations \eqref{main:defpsi} and \eqref{main:lnsc}, we define
\begin{equation*}
\psi_c(z) = \max\left(0, \abs{z} - c\right)
\end{equation*}
and for values $b$, $c$, and $k$ to be fixed later we define $m/2$ pairs of functions, indexed by $i=1..m/2$:
\begin{align*}
  f_{i,1}(x) &= \frac{1}{\sqrt{2}}\abs{b - \inner{x}{v_{i,0}}} + \frac{1}{2\sqrt{k}} \sum_{r\text{ even}}^k \psi_c\left(\inner{x}{v_{i,r-1}} - \inner{x}{v_{i,r}}\right)    \\
  f_{i,2}(x) &= \frac{1}{2\sqrt{k}} \sum_{r\text{ odd}}^k
  \psi_c\left(\inner{x}{v_{i,r-1}} - \inner{x}{v_{i,r}}\right)
\end{align*}
Assume for now that $m$ is even. If $m$ is odd, then we simply set one of the functions to $0$ and the oracle complexity is reduced by a factor proportional to $\frac{m-1}{m}$.
% This time we have $\frac{m}{2}(k+1)$ orthonormal vectors $v_{i,r}$ which are sampled uniformly at random from the unit sphere in $\mathbb{R}^d$. Rather than having all of the functions $f_i$ share a set of vectors $v_r$ as we did in the deterministic constructions, now each pair of functions $f_{i,1}$ and $f_{i,2}$ gets its own set of $k+1$ vectors. As before, optimizing $F$ is equivalent to learning the identity of the vectors $v_{i,r}$, and in order to learn the identity of a particular vector $v_{i,r}$, it is necessary to make a series of queries to $f_{i,1}$ and $f_{i,2}$. This forces the optimization algorithm to complete $\Omega(m)$ subproblems, each requiring $\Omega(k)$ oracle queries, in order to optimize $F$.
% First, we will show that when the dimension $d$ is large enough, with sufficiently high probability $A$ cannot learn the identity of $v_{i,r}$ without making more than $r$ queries to the functions $f_{i,1}$ and $f_{i,2}$. Next, we will lower bound the suboptimality of the function $(f_{i,1} + f_{i,2})/2$ before $A$ has learned the identity of all $k+1$ vectors $v_{i,r}$. Finally, we will conclude from this that $A$ must make $\Omega(mk)$ queries to $h_F$ in order to find an $\epsilon$-suboptimal point for $F$. 
% Since each pair of functions $f_{i,\cdot}$ operates on a separate $(k+1)$-dimensional subspace, it will be useful to decompose vectors into $x = x_i^v + x_i^\perp$ where $x_i^v = \sum_{r=0}^{k} \inner{x}{v_{i,r}}v_{i,r}$ and $x_i^\perp = x - x_i^v$. 

At the end of this proof, we will show that the functions $f_{i,\cdot}$ satisfy Property \ref{keyprop}. Since the domain, and therefore the queries made to the oracle are bounded by $B=1$, Property \ref{keyprop} and Lemma \ref{lem:keyrand4} ensure that
when the dimension is at least 
$d \geq 2N + \frac{8\parens{1 + \sqrt{2N}}^2}{c^2}\log\parens{2mkN}$,
for iterate $x$ generated after $N$ oracle queries, $\inner{x}{v_{i,r}} \geq \frac{c}{2}$ for no more than $N$ vectors $v_{i,r}$ with probability $\frac{3}{4}$.

We now bound the suboptimality of $(f_{i,1} + f_{i,2})/2$ for any $x$ where $\inner{x}{v_{i,k}} < \frac{c}{2}$. 
\begin{align*}
\frac{1}{2}(f_{i,1}(x) + f_{i,2}(x)) = \frac{1}{2\sqrt{2}}\abs{b - \inner{x}{v_{i,0}}} + \frac{1}{4\sqrt{k}} \sum_{r=1}^k \psi_c\left(\inner{x}{v_{i,r-1}} - \inner{x}{v_{i,r}}\right) \\
\end{align*}
It is straightforward to confirm that this function is minimized when $\inner{x}{v_{i,r}} = b$ for all $r$. Since this is also true for every $i$, $F$ is minimized at $x_b = b\sum_{i=1}^{\frac{m}{2}}\sum_{r=0}^{k}v_{i,r}$. In order that $\norm{x_b} = 1$ so that $x_b \in \mathcal{X}$, we set $b = \sqrt{\frac{2}{m(k+1)}}$. Thus,
\begin{align*}
\frac{1}{2}(f_{i,1}(x) + f_{i,2}(x)) - \frac{1}{2}(f_{i,1}(x^*) + f_{i,2}(x^*)) &\geq \frac{1}{2}(f_{i,1}(x) + f_{i,2}(x)) - 0 \\
&\geq \frac{1}{2\sqrt{2}}\abs{b - \inner{x}{v_{i,0}}} + \frac{1}{4\sqrt{k}} \sum_{r=1}^k \abs{\inner{x}{v_{i,r-1}} - \inner{x}{v_{i,r}}} - c \\
&\geq -\frac{k}{4\sqrt{k}}c + \frac{1}{2\sqrt{2}}\abs{b - \inner{x}{v_{i,0}}} + \frac{1}{4\sqrt{k}}\abs{\inner{x}{v_{i,0}} - \inner{x}{v_{i,k}}} \\
&\geq -\frac{k}{4\sqrt{k}}c  + \frac{1}{2\sqrt{2}}\abs{b - \inner{x}{v_{i,0}}} + \frac{1}{4\sqrt{k}}\abs{\inner{x}{v_{i,0}}} - \frac{1}{4\sqrt{k}}\frac{c}{2} \\
&\geq -\frac{2k + 1}{8\sqrt{k}} c + \min_{z \in \mathbb{R}} \frac{1}{2\sqrt{2}}\abs{b - z} + \frac{1}{4\sqrt{k}}\abs{z} \\
&= -\frac{2k + 1}{8\sqrt{k}}c + \frac{b}{4\sqrt{k}} \\
&\geq -\frac{2k + 1}{8\sqrt{k}}c + \frac{1}{4k\sqrt{m}}
\end{align*}
Therefore, we set $c = \min\set{\frac{1}{\sqrt{N}}, \frac{\epsilon}{\sqrt{k}}}$ and $k = \lfloor \frac{1}{10\epsilon\sqrt{m}} \rfloor$ so that
\[ \frac{1}{2}(f_{i,1}(x) + f_{i,2}(x)) - \frac{1}{2}(f_{i,1}(x^*) + f_{i,2}(x^*)) \geq -\frac{\epsilon}{2} + \frac{5}{2}\epsilon = 2\epsilon \]
This ensures that if $\inner{x}{v_{i,k}} < \frac{c}{2}$ for at least $m/4$ $i$'s, then $x$ cannot be $\epsilon$-suboptimal for $F$. Therefore, after $N = \frac{m(k+1)}{4}$ queries in dimension
\begin{align}
d 
&\geq 2N + \frac{8\parens{1 + \sqrt{2N}}^2}{c^2}\log\parens{2mkN} \\
&= \frac{m(k+1)}{2} + \frac{8\parens{1 + \sqrt{2N}}^2}{c^2}\log\parens{\frac{m^2k(k+1)}{2}} \\
&= \Omega\parens{\parens{\frac{m}{\epsilon^2} + \frac{1}{\epsilon^3\sqrt{m}}}\log\parens{\frac{m}{\epsilon^2}}}
\end{align}
then $F(x) - F(x^*) \geq \epsilon$ with probability $\frac{3}{4}$. When $\epsilon < \frac{1}{10\sqrt{m}}$, $A$ must make at least
\[ \frac{m(k+1)}{4} \geq \frac{m}{4} + \frac{\sqrt{m}}{80\epsilon} \]
queries with probability $\frac{3}{4}$.
Finally, we prove that Property \ref{keyprop} holds for our construction:

\begin{proof}[Proof of Propetry \ref{keyprop} for Lipschitz, non-strongly convex construction]
First we prove the properties about the gradients:

Consider the case when $t$ is odd. From \eqref{main:defpsi}, it is clear that $\frac{d\psi_c}{dz}(z) = 0$ when $\abs{z} < c$. Furthermore, for $r > t$, $\abs{\inner{x}{v_{i,r-1}} - \inner{x}{v_{i,r}}} < c$. Therefore, any subgradient $g_1 \in \partial f_{i,1}(x)$ and $g_2 \in \partial f_{i,2}(x)$ can be expressed as
\begin{align*}
g_1 &= \frac{\textrm{sign}(b - \inner{x}{v_{i,0}})}{\sqrt{2}}v_{i,0} + \frac{1}{2\sqrt{k}}\sum_{r\text{ even}}^{t-1}\psi_c'(\inner{x}{v_{i,r-1}} - \inner{x}{v_{i,r}})(v_{i,r-1} - v_{i,r})   \\
g_2 &=  \frac{1}{2\sqrt{k}}\sum_{r\text{ odd}}^{t}\psi_c'(\inner{x}{v_{i,r-1}} - \inner{x}{v_{i,r}})(v_{i,r-1} - v_{i,r})
\end{align*}
where $\text{sign}(0)$ can take any value in the range $[-1,1]$ and where $\psi_c'$ is a subderivative of $\psi_c$. It is clear from these expressions that $\partial f_{i,1}(x) \subseteq \spn{v_{i,0},...,v_{i,t-1}}$ and $\partial f_{i,2}(x) \subseteq \spn{v_{i,1},...,v_{i,t}}$. The proof for the case when $t$ is even follows the same line of reasoning.

We now prove the properties about the proxs:

Since each pair of functions $f_{i,\cdot}$ operates on a separate $(k+1)$-dimensional subspace, it will be useful to decompose vectors into $x = x_i^v + x_i^\perp$ where $x_i^v = \sum_{r=0}^{k} \inner{x}{v_{i,r}}v_{i,r}$ and $x_i^\perp = x - x_i^v$.
First, note that 
\begin{align*}
\prox_{f_{i,1}}(x,\beta) &= \argmin_{u} f_{i,1}(u) + \frac{\beta}{2}\norm{x - u}^2 \\
&= \argmin_{u_i^v, u_i^\perp} f_{i,1}(u_i^v) + \frac{\beta}{2}\norm{x_i^v + x_i^\perp - u_i^v - u_i^\perp}^2 \\
&= \argmin_{u_i^v, u_i^\perp} f_{i,1}(u_i^v) + \frac{\beta}{2}\norm{x_i^v - u_i^v}^2 + \norm{x_i^\perp - u_i^\perp}^2 \\
&= \argmin_{u_i^v} f_{i,1}(u_i^v) + \frac{\beta}{2}\norm{x_i^v - u_i^v}^2 + \argmin_{u_i^\perp} \frac{\beta}{2}\norm{x_i^\perp - u_i^\perp}^2\\
&= x_i^\perp + \argmin_{u_i^v} f_{i,1}(u_i^v) + \frac{\beta}{2}\norm{x_i^v - u_i^v}^2 \\
&= x_i^\perp + \prox_{f_{i,1}}(x_i^v,\beta)
\end{align*}
(and similarly for $f_{i,2}$). From there, the proof is similar to the proof of lemma \ref{lem:DetLBProxSpan}. First, consider the function $f_{i,2}$ and let $t' \geq t$ be the smallest even number which is not smaller than $t$. It will be convenient to further decompose vectors into $x_i^v = x^- + x^+$ where $x^- = \sum_{r=0}^{t'-1} \inner{x_i^v}{v_{i,r}}v_{i,r}$ and $x^+ = \sum_{r=t'}^{k} \inner{x_i^v}{v_{i,r}}v_{i,r}$. So
\begin{align*}
f_{i,2}(x^- + x^+) &= \frac{1}{2\sqrt{k}} \sum_{r\in \set{1,3,...,t'-1}} \psi_c\left(\inner{x^-}{v_{i,r-1}} - \inner{x^-}{v_{i,r}}\right) \\
& +  \frac{1}{2\sqrt{k}}\sum_{r\in \set{t'+1,t'+3,...k}} \psi_c\left(\inner{x^+}{v_{i,r-1}} - \inner{x^+}{v_{i,r}}\right) \\
&= f_{i,2}(x^-) + f_{i,2}(x^+)
\end{align*}
Therefore,
\begin{align*}
\prox_{f_{i,2}}(x_i^v,\beta) &= \argmin_{u^-,u^+} f_{i,2}(u^- + u^+) + \frac{\beta}{2}\norm{x^- + x^+ - u^- - u^+}^2 \\
&=  \argmin_{u^-,u^+} f_{i,2}(u^- + u^+) + \frac{\beta}{2}\norm{x^- - u^-}^2 + \frac{\beta}{2} \norm{x^+ - u^+}^2 \\
&= \argmin_{u^-,u^+} f_{i,2}(u^-) + f_{i,2}(u^+) + \frac{\beta}{2}\norm{x^- - u^-}^2 + \frac{\beta}{2} \norm{x^+ - u^+}^2 \\
&= \argmin_{u^-} f_{i,2}(u^-) + \frac{\beta}{2}\norm{x^- - u^-}^2  + \argmin_{u^+}  f_{i,2}(u^+)  +\frac{\beta}{2} \norm{x^+ - u^+}^2
\end{align*}
Since $\abs{\inner{x^+}{v_{i,r}}} < \frac{c}{2}$ for all $r \geq t$, $\abs{\inner{x^+}{v_{i,r-1}} - \inner{x^+}{v_{i,r}}} < c$ for $r > t$, which implies that $f_{i,2}(x^+) = 0$. Therefore, the objective of the second $\argmin$ is non-negative and is equal to zero when $u^+ = x^+$ so
\begin{align*}
\prox_{f_{i,2}}(x_i^v,\beta) &= x^+ + \argmin_{u^-} f_{i,2}(u^-) + \frac{\beta}{2}\norm{x^- - u^-}^2  
\end{align*}
Therefore, when $t$ is even, $t' = t$ and $\prox_{f_{i,2}}(x,\beta) \in \spn{x, v_{i,0},...,v_{i,t-1}}$, and when $t$ is odd, $t' = t+1$ and $\prox_{f_{i,2}}(x,\beta) \in \spn{x,v_{i,0},...,v_{i,t}}$. A very similar line of reasoning can be used to show the statement for $f_{i,1}$.
\end{proof}

\paragraph{Remark:} As was mentioned before, Lemma \ref{lem:keyrand4} applies when the norm of every query point is bounded by $B$. Since all points in the domain of the optimization problem have norm bounded by $B$, this is not problematic. However, we can slightly modify our construction to make optimizing $F$ hard \emph{even} for algorithms that are allowed to query outside of the domain. 

We could redefine our functions as follows:
\[ f'_{i,j}(x) = \begin{cases} f_{i,j}(x) & \norm{x} \leq B \\ f_{i,j}\left(B\frac{x}{\norm{x}}\right) + L\left( \norm{x} - B \right) & \norm{x} > B \end{cases} \]
$f'_{i,j}$ is still continuous, and $L$-Lipschitz, and it also has the property that it behaves exactly like $f_{i,j}$ on $B$-ball. However, querying the oracle of $f'_{i,j}$ outside of the $B$-ball gives no more information about the function than querying at $B\frac{x}{\norm{x}}$. In fact, an algorithm that was only allowed to query within the $B$-ball would be able to simulate the oracle of $F'$. Therefore, since the algorithm that is not allowed to query at large vectors cannot optimize $F'$ quickly, and it could simulate queries with unbounded norm, it follows that querying with unbounded norm cannot improve the rate of convergence. This fact is needed in the proof of Theorem \ref{thm:RandLscLower} below.

%%%%%%%%%%%%%%%%%%%%%%%%%%%%%%%%%%%%%%%%%%%%%
%%%%%%%%%%				RAND LIP SC		   	     	           %%%%%%%%%%
%%%%%%%%%%%%%%%%%%%%%%%%%%%%%%%%%%%%%%%%%%%%%
\subsection{Non-smooth and strongly convex components \label{appendix:RandLscLower}}
We now prove Theorem \ref{thm:RandLscLower} using a reduction from the Lipschitz and \emph{non}-strongly convex setting:
\RandLscLower*
\begin{proof}
Just as in the proof of Theorem \ref{thm:DetLscLower}, we assume towards contradiction that there is an algorithm $A$ which can optimize $F$ using $o\left(m + \frac{\sqrt{m}L}{\sqrt{\lambda\epsilon}} \right)$ queries to $h_F$ in expectation. Then $A$ could be used to minimize the sum $\tilde{F}$ of $m$ functions $\tilde{f}_i$, which are convex and $L$-Lipschitz continuous over the domain $\set{x:\norm{x}\leq B}$ by adding a regularizer. Let
\[ F(x) = \frac{1}{m} \sum_{i=1}^m f_i(x) := \frac{1}{m} \sum_{i=1}^m \tilde{f}_i(x) + \frac{\lambda}{2}\norm{x}^2  \]
Note that $f_i$ is $\lambda$-strongly convex and since $\tilde{f}_i$ is $L$-Lipschitz, $f_i$ is $(L + \lambda B)$-Lipschitz continuous on the same domain. Furthermore, by setting $\lambda = \frac{\epsilon}{B^2}$, 
\[ \tilde{F}(x) \leq F(x) \leq \tilde{F}(x) + \frac{\epsilon}{2B^2}\norm{x}^2 \leq \tilde{F}(x) + \frac{\epsilon}{2} \]
By assumption, $A$ can find an $\hat{x}$ such that $F(\hat{x}) - F(x^*) < \frac{\epsilon}{2}$ using $o\left( m + \frac{\sqrt{m}(L + \lambda B)}{\sqrt{\lambda\epsilon}} \right) = o\left( m + \frac{\sqrt{m}LB}{\epsilon} \right)$ queries to $h_F$, and
\[ \frac{\epsilon}{2} > F(\hat{x}) - F(x^*) \geq \tilde{F}(\hat{x}) - \tilde{F}(\tilde{x}^*) - \frac{\epsilon}{2}  \]
Thus $\hat{x}$ is $\epsilon$-suboptimal for $\tilde{F}$. However, this contradicts the conclusion of theorem \ref{thm:RandLBLower} when $L > 0$, $\lambda > 0$, $0 < \epsilon < \frac{L^2}{200\lambda m}$, and $d = \Omega\left( \frac{L^3}{\sqrt{\lambda^3\epsilon^3 m}}\log \frac{L^2m}{\lambda\epsilon} \right)$ leads to contradiction.
\end{proof}

%%%%%%%%%%%%%%%%%%%%%%%%%%%%%%%%%%%%%%%%%%%%%
%%%%%%%%%%			RAND SMOOTH NON-SC	   	     	    %%%%%%%%%%
%%%%%%%%%%%%%%%%%%%%%%%%%%%%%%%%%%%%%%%%%%%%%
\subsection{Smooth and not strongly convex components \label{appendix:RandSBLower}}
\RandSBLower*
Without loss of generality, we can assume that $\gamma = B = 1$. We will first consider the case where $\epsilon = O\left( \frac{1}{m} \right)$ and prove that $A$ must make $\Omega\left( \sqrt{\frac{m}{\epsilon}} \right)$ queries to $h_F$. Afterwards, we will show a lower bound of $\Omega(m)$ in the large-$\epsilon$ regime where that term dominates.

The function construction in this case is very similar to the non-smooth randomized construction. As in Equation \eqref{main:defphi}
\begin{equation*}
\phi_c(z) = \begin{cases} 0 & \abs{z} \leq c \\ 2(\abs{z} - c)^2 & c < \abs{z} \leq 2c \\ z^2 - 2c^2 & \abs{z} > 2c \end{cases}
\end{equation*}
The key properties of this function for this proof are that it is convex, everywhere differentiable and $4$-smooth, and when $\abs{z} \leq c$, the function is constant at 0. It is also useful to note that 
\begin{equation} \label{eq:phifunction}
0 \leq z^2 - \phi_c(z) \leq 2c^2 
\end{equation}
As in Equation \eqref{main:snsc}, for values $a$ and $k$ to be fixed later, we define the pairs of functions for $i = 1,...,m/2$: 
\begin{align*} 
f_{i,1}(x) &= \frac{1}{16}\left( \inner{x}{v_{i,0}}^2 - 2a\inner{x}{v_{i,0}} + \sum_{r \in \set{2,4,...} \leq k} \phi_c\left(\inner{x}{v_{i,r-1}} - \inner{x}{v_{i,r}}\right)  \right) \\
f_{i,2}(x) &= \frac{1}{16}\left( \sum_{r \in \set{1,3,...} \leq k} \phi_c\left(\inner{x}{v_{i,r-1}} - \inner{x}{v_{i,r}}\right) + \phi_c\left(\inner{x}{v_{i,k}}\right) \right)
\end{align*}
with orthonormal vectors $v_{i,r}$ chosen randomly on the unit sphere in $\mathbb{R}^d$ as for Theorem \ref{thm:RandLBLower}. 

At the end of this proof, we will show that the functions $f_{i,\cdot}$ satisfy Property \ref{keyprop}. Since the domain, and therefore the queries made to the oracle are bounded by $B$, Property \ref{keyprop} and Lemma \ref{lem:keyrand4} ensure that when the dimension is at least 
$d = 2N + \frac{8(1+\sqrt{2N})^2}{c^2}\log(2mkN)$
then after $N$ oracle queries, $\inner{x}{v_{i,r}} \geq \frac{c}{2}$ for no more than $N$ vectors $v_{i,r}$ with probability $\frac{3}{4}$.

%Therefore, with probability at least $\frac{1}{2}$, $A$ must make $\min\left( \frac{\sqrt{m}}{\epsilon},\ \frac{m(k+1)}{4} \right)$ queries to $h_F$ in order to learn $\frac{m(k+1)}{4}$ of the vectors $v_{i,r}$.

Now, we will bound the suboptimality of $F_i(x) := (f_{i,1}(x) + f_{i,2}(x)) / 2$ at an iterate $x$ such that $\abs{\inner{x}{v_{i,r}}} < \frac{c}{2}$ for all $r \geq t$. From the definition of $\phi_c$:
\begin{align*}
F_i(x) &= \frac{1}{32}\left( \inner{x}{v_{i,0}}^2 - 2a\inner{x}{v_{i,0}} + \sum_{r=1}^k \phi_c\left(\inner{x}{v_{i,r-1}} - \inner{x}{v_{i,r}}\right) + \phi_c\left(\inner{x}{v_{i,k}}\right)\right)\\
&= \frac{1}{32}\left( \inner{x}{v_{i,0}}^2 - 2a\inner{x}{v_{i,0}} + \sum_{r=1}^{t} \phi_c\left(\inner{x}{v_{i,r-1}} - \inner{x}{v_{i,r}}\right) \right) \\
F_i(x) &\leq \frac{1}{32}\left( \inner{x}{v_{i,0}}^2 - 2a\inner{x}{v_{i,0}} + \sum_{r=1}^{t} \left(\inner{x}{v_{i,r-1}} - \inner{x}{v_{i,r}}\right)^2 + \inner{x}{v_{i,t}}^2\right)\\
F_i(x) &\geq \frac{1}{32}\left( \inner{x}{v_{i,0}}^2 - 2a\inner{x}{v_{i,0}} + \sum_{r=1}^{t} \left(\inner{x}{v_{i,r-1}} - \inner{x}{v_{i,r}}\right)^2 + \inner{x}{v_{i,t}}^2\right)-\frac{t+1}{16}c^2
\end{align*}
Define 
\[ F_i^{t+1}(x) := \frac{1}{32}\left( \inner{x}{v_{i,0}}^2 - 2a\inner{x}{v_{i,0}} + \sum_{r=1}^{t} \left(\inner{x}{v_{i,r-1}} - \inner{x}{v_{i,r}}\right)^2 + \inner{x}{v_{i,{t}}}^2\right) \]
and note that in the proof of Theorem \ref{thm:DetSBLower} we already showed that that the optimum of $F^t_i$ is achieved at 
\[ x_{i,t}^* = a\sum_{r=0}^{t-1} \left(1 - \frac{r + 1}{t+1}\right)v_{i,r} \]
and 
\[ F_i^t\left(x_{i,t}^*\right) = -\frac{a^2}{32}\left(1 - \frac{1}{t+1}\right) \]
and
\[ \norm{x_{i,t}^*}^2 \leq \frac{a^2t}{3} \]
Therefore, setting $a = \sqrt{\frac{6}{m(k+1)}}$ ensures that $\norm{\sum_{i=1}^{\frac{m}{2}}x_{i,k+1}^*} \leq 1$. It is not necessarily true that $x^* = \sum_{i=1}^{\frac{m}{2}}x_{i,k+1}^*$, but it serves as an upper bound on the optimum.

Let $q := \lfloor\frac{k}{2}\rfloor$ and consider an iterate $x$ generated by $A$ before it makes $q - 1$ queries to the functions $f_{i,1}$ and $f_{i,2}$. When $\inner{x}{v_{i,r}} < \frac{c}{2}$ for all $r \geq q$,
\begin{align*}
F_i(x) - F_i(x^*) &\geq F_i^q(x) - \frac{qc^2}{16} - F_i(x_{i,k+1}^*) \\
&\geq F_i^q(x_{i,q}^*) - F_i^{k+1}(x_{i,k+1}^*) - \frac{qc^2}{16} \\
&= -\frac{a^2}{32}\left(1 - \frac{1}{q+1}\right) + \frac{a^2}{32}\left(1 - \frac{1}{k+2}\right) - \frac{qc^2}{16} \\
&\geq \frac{1}{32k^2m} - \frac{kc^2}{32}
\end{align*}
where the last inequality holds as long as $k \geq 2$. When $\epsilon < \frac{1}{320m}$, setting $c = \min\set{\frac{1}{\sqrt{N}}, \sqrt{\frac{16\epsilon}{k}}}$ and $k = \lfloor \frac{1}{\sqrt{80\epsilon m}} \rfloor \geq 2$, ensures that
\[ F_i(x) - F_i(x^*) \geq \frac{5}{2}\epsilon - \frac{\epsilon}{2} = 2\epsilon \]
Therefore, if $\inner{x}{v_{i,r}} < \frac{c}{2}$ for all $r \geq q$ is true for at least $\frac{m}{4}$ of the $i$'s, then $x$ cannot be $\epsilon$-suboptimal for $F$. So, for $N = \frac{mq}{4}$ in dimension
\begin{align}
d 
&\geq 2N + \frac{8(1+\sqrt{2N})^2}{c^2}\log(2mkN) \\
&= \frac{mq}{2} + \frac{8(1+\sqrt{2N})^2}{c^2}\log\parens{\frac{m^2kq}{2}} \\
&= \Omega\parens{\parens{\sqrt{\frac{m}{\epsilon}} + \frac{1}{\epsilon^2}}\log\parens{\frac{m}{\epsilon}}}
\end{align}
 with probability $\frac{3}{4}$, the algorithm must make at least $\frac{mq}{4}$ queries in order to reach an $\epsilon$-suboptimal point. This gives a lower bound of
\[ \frac{m}{4}q \geq \frac{\sqrt{m}}{48\sqrt{10\epsilon}} \]
which holds with probability $\frac{3}{4}$. To complete the first half of the proof, we prove that Property \ref{keyprop} holds for this construction:

\begin{proof}[Proof of Property \ref{keyprop} for smooth and non-strongly convex construction]
First we prove the properties about gradients:

Consider the case when $t$ is odd. From equation \ref{eq:phifunction}, we can see that $\frac{d\phi_c}{dz}(z) = 0$ when $\abs{z} < c$. Furthermore, for $r > t$, $\abs{\inner{x}{v_{i,r-1}} - \inner{x}{v_{i,r}}} < c$. We can therefore express the gradients:
\begin{align*}
\nabla f_{i,1}(x)&= \frac{1}{16}\left( 2\inner{x}{v_{i,0}} - 2av_{i,0} + \sum_{r\text{ even}}^{t-1} \phi'_c\left(\inner{x}{v_{i,r-1}} - \inner{x}{v_{i,r}}\right)(v_{i,r-1}-v_{i,r})  \right)   \\
\nabla f_{i,2}(x) &= \frac{1}{16}\left( \sum_{r\text{ odd}}^t \phi'_c\left(\inner{x}{v_{i,r-1}} - \inner{x}{v_{i,r}}\right)(v_{i,r-1} - v_{i,r}) + \phi'_c\left(\inner{x}{v_{i,k}}\right)v_{i,k} \right)
\end{align*}
It is clear from these expressions that $\nabla f_{i,1}(x) \in \spn{v_{i,0},...,v_{i,t-1}}$ and $\nabla f_{i,2}(x) \in \spn{v_{i,0},...,v_{i,t}}$. The proof for the case when $t$ is even follows the same line of reasoning.

Now, we prove the properties about proxs:

We follow the same line of reasoning as in the Lipschitz and non-strongly convex case. The only necessary addition is to show, that when $t'\geq t$ is the smallest even number which is not smaller than $t$ and $u^- = \sum_{r=0}^{t'-1} \inner{u_i^v}{v_{i,r}}v_{i,r}$ and $u^+ = \sum_{r=t'}^{k} \inner{u_i^v}{v_{i,r}}v_{i,r}$, then $f_{i,2}(u^- + u^+) = f_{i,2}(u^-) + f_{i,2}(u^+)$: 
\begin{align*}
f_{i,2}(u^- + u^+) &= \frac{1}{16}\bigg(  \sum_{r \in \set{1,3,...,t'-1}} \phi_c\left(\inner{u^-}{v_{i,r-1}} - \inner{u^-}{v_{i,r}}\right) \\
&  + \sum_{r \in \set{t'+1,t'+3,...} < k} \phi_c\left(\inner{u^+}{v_{i,r-1}} - \inner{u^+}{v_{i,r}}\right) + \phi_c\left(\inner{u^+}{v_{i,k}}\right) \bigg) \\
&= f_{i,2}(u^-) + f_{i,2}(u^+)
\end{align*} 
This same reasoning applies for $f_{i,1}$ or odd $t'$. 
\end{proof}

So far, we have shown a lower bound of $\Omega\left( \sqrt{\frac{m}{\epsilon}} \right)$ when $\epsilon = O\left(\frac{1}{m}\right)$. We now show a lower bound of $\Omega(m)$ for all $\epsilon > 0$, which accounts for the first term in the lower bound $\Omega\left(m + \sqrt{\frac{m}{\epsilon}}\right)$ which dominates when $\epsilon = \Omega\left(\frac{1}{m}\right)$. Consider the $0$-smooth functions
\[ f_i(x) = C\inner{x}{v_i} \]
for any constant $C > 0$, and where the orthonormal vectors $v_i$ are randomly chosen as before. $F$ reaches its minimum on the unit ball at
\[ \argmin_{x : \norm{x} \leq 1} F(x) = \frac{-1}{\sqrt{m}}\sum_{i=1}^m v_i \]
and $F(x^*) = -\frac{C}{\sqrt{m}}$. Using similar analysis as inside the proof of Lemma \ref{lem:keyrand3}, if $d = \frac{2B^2}{\left(\frac{1}{4\sqrt{m}}\right)^2}\log 2m \leq 32B^2m\log 2m$ then $\mathbb{P}\left(\exists i\text{ which has not been queried s.t. }\abs{\inner{x}{v_i}} \geq \frac{1}{4\sqrt{m}}\right) < \frac{1}{2}$. So if fewer than $\frac{m}{2}$ functions have been queried, then with probability at least $\frac{1}{2}$:
\[ F(x) - F(x^*) \geq \left(\frac{-C\sqrt{31}}{8\sqrt{m}} + \frac{-C}{8\sqrt{m}}\right) - \frac{-C}{\sqrt{m}} \geq \frac{0.16C}{\sqrt{m}} \]
so 
\[ \mathbb{E}\left[ F(x) - F(x^*) \right] \geq \frac{0.08C}{\sqrt{m}} \]
Therefore, by simply choosing $C = \frac{\epsilon\sqrt{m}}{0.08}$, we ensure that such a point $x$ is at least $\epsilon$-suboptimal, completing the proof for all $\epsilon > 0$.

As noted above, the queries made to the oracle must be bounded for Lemma \ref{lem:keyrand4}. Since the domain of $F$ is the $B$-ball, this is easy to satisfy. If we want to ensure that our construction is still hard to optimize, \emph{even} if the algorithm is allowed to query arbitrarily large vectors, then we can modify our construction by defining a new function $f'_{i,j}$ in terms of its gradient
\[ \nabla f'_{i,j}(x) = \begin{cases} \nabla f_{i,j}(x) & \norm{x} \leq B \\ \nabla f_{i,j}\left( B\frac{x}{\norm{x}} \right) & \norm{x} > B \end{cases} \]
This function is continuous and smooth, and also has the property that querying the oracle at a point $x$ outside of the $B$-ball is cannot be more informative than querying at $B\frac{x}{\norm{x}}$. That is, an algorithm that is not allowed to query outside the $B$-ball can simulate such queries using its restricted oracle. Since this restricted algorithm cannot optimize quickly, but can still calculate the oracle outputs that it would have recieved by querying large vectors, it follows that an unrestricted algorithm could not optimize this function quickly either.

\paragraph{Remark:}
Another variant of \eqref{eq:main} that one might consider is an unconstrained optimization problem, where we assume that the minimizer of $F$ lies on the interior of that ball. In other words, we could consider a version of \eqref{eq:main} where the gradient of $F$ must vanish on the interior of $\mathcal{X}$. 

In this case, there is little reason to consider any $\epsilon$ larger than $\frac{\gamma B^2}{2}$, since $F(0) - F(x^*) \leq \frac{\gamma B^2}{2}$ always (by smoothness $F(0) - F(x^*) \leq \inner{\nabla F(x^*)}{x_0 - x^*} + \frac{\gamma}{2}\norm{x^*}^2 \leq \frac{\gamma B^2}{2}$). Consequently, when $\epsilon \geq \frac{\gamma B^2}{2}$ there is a trivial upper bound of \emph{zero} oracle queries, as just returning the zero vector guarantees $\epsilon$-suboptimality. We can construct functions so that Theorem \ref{thm:RandSBLower} still applies for $0 < \epsilon < \frac{9\gamma B^2}{128}$.
In the previous proof, the first construction is still valid in the unconstrained case since the minimizer lies within the unit ball. For the $\Omega(m)$ term, consider the $1$-smooth functions (assume w.l.o.g. that $\gamma = B = 1$)
\[ f_i(x) = \sqrt{m}\inner{x}{v_i} + \frac{\norm{x}^2}{2} \]
where the $m$ orthonormal vectors $v_i$ are drawn randomly from the unit sphere in $\mathbb{R}^d$ as in the previous construction. The gradient of $F$ vanishes at $x^* = -\frac{1}{\sqrt{m}}\sum_{i=1}^mv_i$, (note $\norm{x^*} = 1$) and $F(x^*) = -\frac{1}{2}$. Using similar techniques as inside the proof of Lemma \ref{lem:keyrand3} if $d = \frac{2B^2}{\left(\frac{1}{4\sqrt{m}}\right)^2}\log 10m \leq 32B^2m\log 10m$, then for any iterate $x$ generated by $A$ before $f_i$ has been queried, $\mathbb{P}\left(\exists i\text{ which has not been queried s.t. }\abs{\inner{x}{v_i}} \geq \frac{1}{4\sqrt{m}}\right) < \frac{9}{10}$. Furthermore, if $\abs{\inner{x}{v_i}} < \frac{1}{4\sqrt{m}}$ for more than $\frac{m}{2}$ of the functions, then
\begin{align*}
F(x) &= \frac{1}{m}\sum_{i=1}^m \sqrt{m}\inner{x}{v_i} + \frac{\norm{x}^2}{2} \\
&\geq \frac{1}{m}\left(\frac{m}{2}\cdot\sqrt{m}\frac{-1}{\sqrt{m}} + \frac{m}{2} \cdot \sqrt{m}\frac{-1}{4\sqrt{m}} \right) + \frac{\frac{m}{2} \cdot \frac{1}{m} + \frac{m}{2} \cdot \frac{1}{16m}}{2} \\
&= \frac{-23}{64}\\ 
\end{align*}
Therefore, if fewer than $\frac{m}{2}$ functions have been queried, then with probability at least $\frac{9}{10}$:
\[ F(x) - F(x^*) \geq  \frac{-23}{64} - \frac{-1}{2} = \frac{9}{64} \]
so
\[ \mathbb{E}\left[ F(x) - F(x^*) \right] \geq \frac{9}{128} \]
This proves a lower bound of $\Omega(m)$ for $0 < \epsilon < \frac{9\gamma B^2}{128}$.

%%%%%%%%%%%%%%%%%%%%%%%%%%%%%%%%%%%%%%%%%%%%%
%%%%%%%%%%				RAND SMOOTH SC	   	     	    %%%%%%%%%%
%%%%%%%%%%%%%%%%%%%%%%%%%%%%%%%%%%%%%%%%%%%%%
\subsection{Smooth and strongly convex components \label{appendix:RandSscLower}}
In the smooth and strongly convex case, we cannot use the same simple reduction that was used to prove Theorem \ref{thm:RandLscLower}. Using that construction, we would be able to show a lower bound of $m$, but would not be able to show any dependence on $\epsilon$, so the lower bound would be loose. Instead, we will use an explicit construction similar to the one used in Theorem \ref{thm:RandSBLower}.
\RandSscLower*
\begin{proof}
We will prove the theorem for a $1$-smooth, $\lambda$-strongly convex problem, for $\lambda < \frac{1}{73m}$, which can be generalized by scaling. 

As in the proof for the non-strongly convex case, we introduce the 4-smooth helper function
\[ \phi_c(z) = \begin{cases} 0 & \abs{z} \leq c \\ 2(\abs{z} - c)^2 & c < \abs{z} \leq 2c \\ z^2 - 2c^2 & \abs{z} > 2c \end{cases} \]
using which we will construct $m/2$ pairs of functions, which will each be based on the following. As in previous proofs, we randomly select orthonormal vectors $v_{i,r}$ from $\mathbb{R}^d$. Then, for constants $k$, $C$, and $\zeta$ to be decided upon later; with $\tilde{\lambda} := m\cdot\lambda$; and for $i = 1,...,\lfloor m/2 \rfloor$ define the following pairs of functions (if $m$ is odd, let $f_m(x) = \frac{\tilde{\lambda}}{2m}\norm{x}^2$):
\begin{align*} 
f_{i,1}(x) &= \frac{1-\tilde{\lambda}}{16}\left( \inner{x}{v_{i,0}}^2 - 2C\inner{x}{v_{i,0}} \sum_{r\text{ even}}^k \phi_c\left(\inner{x}{v_{i,r-1}} - \inner{x}{v_{i,r}}\right) \right) + \frac{\tilde{\lambda}}{2m}\norm{x}^2 \\
f_{i,2}(x) &= \frac{1-\tilde{\lambda}}{16}\left( \zeta\phi_c(\inner{x}{v_{i,k}})+ \sum_{r\text{ odd}}^k \phi_c\left(\inner{x}{v_{i,r-1}} - \inner{x}{v_{i,r}}\right) \right) + \frac{\tilde{\lambda}}{2m}\norm{x}^2
\end{align*}
When $\tilde{\lambda} \in [0,1]$ these function are 1-smooth and $\lambda$-strongly convex. 

These functions also have Property \ref{keyprop}, but we will omit the proof, as it follows directly from the proof in Appendix \ref{appendix:RandSBLower}. Intuitively, the squared norm reveals no new information about the vectors $v_{i,r}$ besides what is already included in the query point $x$.

When all of the queries are bounded by $B$, Property \ref{keyprop} along with Lemma \ref{lem:keyrand4} ensures that when 
$d = 2N + \frac{8B^2\parens{1 + \sqrt{2N}}^2}{c^2}\log\parens{2mkN}$,
after the algorithm make $N$ queries $\inner{x}{v_{i,r}} \geq \frac{c}{2}$ for at most $N$ of the vectors $v_{i,r}$ with probability $\frac{3}{4}$.
For this to apply, we need that all of the queries made by the algorithm are within a $B$-ball around the origin.
We know that $F(0) - F(x^*) = \epsilon_0$, and by strong-convexity $F(0) \geq F(x^*) + \frac{\lambda}{2}\norm{x^*}^2$, therefore, $\norm{x^*} \leq \sqrt{\frac{2\epsilon_0}{\lambda}} =: B$. Since the optimum point must lie in the $B$-ball around the origin, we will restrict the algorithm to query only at points within the $B$-ball. At the end of the proof, we will show that with a small modification to the functions \emph{outside} of the $B$-ball, querying at vectors of large norm cannot help the algorithm.

Now it remains to lower bound the suboptimality of the pair $f_{i,1}$ and $f_{i,2}$ at an iterate which is nearly orthogonal to all vectors $v_{i,r}$ for $r > t$:

In order to bound the suboptimality of a pair of functions $i$, it will be convenient to bundle up all of the terms which affect the value of $\inner{x^*}{v_{i,r}}$ from all $m$ of the component functions. Most of those terms are contained in $f_{i,1}$ and $f_{i,2}$, however, $\norm{x}^2$ terms in each of the other components also affect the value of $\inner{x^*}{v_{i,r}}$. For each $i$, consider the projection operator $P_i$ which projects a vector $x$ onto the subspace spanned by $\set{v_{i,r}}_{r=0}^k$, and $P_\perp$ projecting onto the space orthogonal to $v_{i,r}$ for all $i,r$. Now decompose
\[ \frac{\tilde{\lambda}}{2m}\norm{x}^2 = \frac{\tilde{\lambda}}{2m}\left(\sum_{i=1}^{\lfloor\frac{m}{2}\rfloor} \norm{P_ix}^2 + \norm{P_\perp x}^2 \right)\]
Gather all $m$ of the $\frac{\tilde{\lambda}}{2m}\norm{P_ix}^2$ terms and split them amongst $f_{i,1}$ and $f_{i,2}$ to make the following modified functions:
\[ \tilde{f}_{i,1}(x) = f_{i,1} - \frac{\tilde{\lambda}}{2m}\norm{x}^2 + \frac{\tilde{\lambda}}{4}\norm{P_ix}^2 \]
\[ \tilde{f}_{i,2}(x) = f_{i,2} - \frac{\tilde{\lambda}}{2m}\norm{x}^2 + \frac{\tilde{\lambda}}{4}\norm{P_ix}^2 \]
After this shuffle, all of the terms affecting $\inner{x^*}{v_{i,r}}$ are contained in these two functions which will help the analysis. Note that there is also a remaining $\frac{\tilde{\lambda}}{2}\norm{P_\perp x}^2$ term, however, this term is not very important to track since we are bounding the suboptimality of $F$, which can only increase by considering that non-negative term and $P_\perp x^* = \vec{0}$. Now, consider
\begin{align*}
\frac{1}{2}\left(\tilde{f}_{i,1}(x) + \tilde{f}_{i,2}(x)\right) = \frac{1-\tilde{\lambda}}{32}\bigg( \inner{x}{v_{i,0}}^2 - &2C\inner{x}{v_{i,0}} + \zeta\phi_c(\inner{x}{v_{i,k}}) \\
&+ \sum_{r=1}^k \phi_c\left(\inner{x}{v_{i,r-1}} - \inner{x}{v_{i,r}}\right) \bigg) + \frac{\tilde{\lambda}}{4}\norm{P_ix}^2 
\end{align*}
If we define
\begin{align*}
F_i^t(x) := \frac{1-\tilde{\lambda}}{32}\bigg( \inner{x}{v_{i,0}}^2 - &2C\inner{x}{v_{i,0}} + \inner{x}{v_{i,t}}^2 + \sum_{r=1}^t \left(\inner{x}{v_{i,r-1}} - \inner{x}{v_{i,r}}\right)^2 \bigg) + \frac{\tilde{\lambda}}{4}\norm{P_ix}^2 
\end{align*}
and 
\begin{align*}
F_i(x) := \frac{1-\tilde{\lambda}}{32} \bigg( \inner{x}{v_{i,0}}^2 - &2C\inner{x}{v_{i,0}} + \zeta\inner{x}{v_{i,k}}^2 + \sum_{r=1}^k \left(\inner{x}{v_{i,r-1}} - \inner{x}{v_{i,r}}\right)^2 \bigg) + \frac{\tilde{\lambda}}{4}\norm{P_ix}^2 
\end{align*}
then when $\abs{\inner{x}{v_{i,r}}} < \frac{c}{2}$
% \[ \frac{1}{2}\left(\tilde{f}_{i,1}(x) + \tilde{f}_{i,2}(x)\right) - \frac{(1-\tilde{\lambda})(t+1)}{16}c^2 \leq F_i^t(x) \leq \frac{1}{2}\left(\tilde{f}_{i,1}(x) + \tilde{f}_{i,2}(x)\right) \]
\[ F_i^t(x) \leq \frac{1}{2}\left(\tilde{f}_{i,1}(x) + \tilde{f}_{i,2}(x)\right) + \frac{(1-\tilde{\lambda})(t+1)}{16}c^2 \]
and for any $y$
% \[ \frac{1}{2}\left(\tilde{f}_{i,1}(y) + \tilde{f}_{i,2}(y)\right) - \frac{(1-\tilde{\lambda})(k + \zeta)}{16}c^2 \leq F_i(y) \leq \frac{1}{2}\left(\tilde{f}_{i,1}(y) + \tilde{f}_{i,2}(y)\right) \]
\[ \frac{1}{2}\left(\tilde{f}_{i,1}(y) + \tilde{f}_{i,2}(y)\right) \leq F_i(y) \]

and, conveniently, $F_i$ is very similar to the construction from Appendix \ref{appendix:DetSscLower}. In particular, let $\tilde{Q} := \frac{1}{2}(\frac{1}{\tilde{\lambda}} - 1) + 1$, then
\begin{align*}
F_i^t(x) = \frac{1}{2}\bigg( \frac{\tilde{\lambda}(\tilde{Q}-1)}{8} \bigg( \inner{x}{v_{i,0}}^2 - &2C\inner{x}{v_{i,0}} + \inner{x}{v_{i,t}}^2 + \sum_{r=1}^t \left(\inner{x}{v_{i,r-1}} - \inner{x}{v_{i,r}}\right)^2 \bigg) + \frac{\tilde{\lambda}}{2}\norm{P_ix}^2 \bigg) 
\end{align*}
\begin{align*}
F_i(x) = \frac{1}{2}\bigg( \frac{\tilde{\lambda}(\tilde{Q}-1)}{8} \bigg( \inner{x}{v_{i,0}}^2 - &2C\inner{x}{v_{i,0}} + \zeta\inner{x}{v_{i,k}}^2 + \sum_{r=1}^k \left(\inner{x}{v_{i,r-1}} - \inner{x}{v_{i,r}}\right)^2 \bigg) + \frac{\tilde{\lambda}}{2}\norm{P_ix}^2 \bigg)
\end{align*}
We have already showed in Appendix \ref{appendix:DetSscLower} that if $\hat{x} := \argmin_x F_i(x)$, and if
\begin{align*}
C > \frac{12\sqrt{\epsilon}}{\tilde{\lambda}(\sqrt{\tilde{Q}}-1)} \implies 2\epsilon^i_0 &:= 2\left(F_i(0) - F_i(\hat{x})\right) > \frac{30\epsilon}{\tilde{\lambda}} \\
\zeta &= \frac{2}{\sqrt{\tilde{Q}}+1} \\
\tilde{\lambda} &< \frac{1}{73} \\
t &= \left\lfloor \frac{\sqrt{\tilde{Q}}-1}{4}\log\frac{\epsilon^i_0}{20\sqrt{\tilde{Q}}\epsilon} \right\rfloor \\
\abs{\inner{x}{v_{i,r}}} &\leq \frac{c}{2} \ \ \forall r > t
\end{align*} 
then,
\[ 2\left(F_i^t(x) - F_i(\hat{x})\right) \geq 10\epsilon \]
Therefore,
\begin{align*}
10\epsilon &\leq 2\left(F_i^t(x) - F_i(\hat{x})\right) \\
&\leq 2\left(\frac{1}{2}\left(\tilde{f}_{i,1}(x) + \tilde{f}_{i,2}(x)\right) + \frac{(1-\tilde{\lambda})(k + \zeta)}{16}c^2  - \frac{1}{2}\left(\tilde{f}_{i,1}(\hat{x}) + \tilde{f}_{i,2}(\hat{x})\right)\right) \\
&\leq \left(\tilde{f}_{i,1}(x) + \tilde{f}_{i,2}(x)\right) + \frac{(1-\tilde{\lambda})(k + \zeta)}{8}c^2  - \left(\tilde{f}_{i,1}(x^*) + \tilde{f}_{i,2}(x^*)\right) \\
\end{align*}
So
\[ \left(\tilde{f}_{i,1}(x) + \tilde{f}_{i,2}(x)\right) - \left(\tilde{f}_{i,1}(x^*) -  \tilde{f}_{i,2}(x^*)\right) \geq 10\epsilon - \frac{(1-\tilde{\lambda})(k + \zeta)}{8}c^2 \]
Setting 
\[ c = \min\set{\frac{1}{\sqrt{N}}, \sqrt{\frac{16\epsilon}{(1-\tilde{\lambda})(k + \zeta)}}} \]
then
\[ \left(\tilde{f}_{i,1}(x) + \tilde{f}_{i,2}(x)\right) - \left(\tilde{f}_{i,1}(x^*) -  \tilde{f}_{i,2}(x^*)\right) \geq 10\epsilon - \frac{(1-\tilde{\lambda})(k + \zeta)}{8}c^2 = 8\epsilon \]

Therefore, if at least $m/4$ of the pairs $i$ it holds that $\abs{\inner{x}{v_{i,r}}} < \frac{c}{2}$ for $r > t$, then
\begin{align*} 
F(x) - F(x^*) &\geq \frac{1}{m} \sum_{i=1}^{\lfloor\frac{m}{2}\rfloor} \left(\tilde{f}_{i,1}(x) + \tilde{f}_{i,2}(x)\right) - \left(\tilde{f}_{i,1}(x^*) -  \tilde{f}_{i,2}(x^*)\right) \\
&\geq \frac{m}{4}\cdot\frac{1}{m}\cdot8\epsilon\\
&= 2\epsilon
\end{align*}

As a consequence of this, when the dimension is sufficiently large,
with probability $\frac{3}{4}$ the optimization algorithm must make at least $t$ queries to each of at least $\frac{m}{4}$ pairs of functions in order to reach an $\epsilon$-suboptimal solution in expectation. So, when
\begin{align*}
\lambda &\leq \frac{1}{161m}\\
\frac{\epsilon_0}{\epsilon} &\geq \frac{60}{\sqrt{m\lambda}}
\end{align*}
this gives a lower bound of
\begin{align*}
\left\lceil \frac{m}{4} \right\rceil \cdot t &\geq \frac{m}{4}\left\lfloor \frac{\sqrt{\tilde{Q}}-1}{4}\log\frac{\epsilon^i_0}{20\sqrt{\tilde{Q}}\epsilon} \right\rfloor \\
&\geq \frac{m}{4}\left\lfloor \frac{\sqrt{\tilde{Q}}-1}{4}\log\frac{\epsilon_0}{40\sqrt{\tilde{Q}}\epsilon} \right\rfloor \\
&\geq \frac{m}{4} \frac{\sqrt{\tilde{Q}}-1}{8}\log\frac{\epsilon_0}{40\sqrt{\tilde{Q}}\epsilon} \\
&\geq \frac{m}{4} \frac{3}{40\sqrt{m\lambda}}\log\frac{\epsilon_0\sqrt{m\lambda}}{30\epsilon} \\
&= \frac{3}{160}\sqrt{\frac{m}{\lambda}}\log\frac{\epsilon_0\sqrt{m\lambda}}{30\epsilon} \\
&= \Omega\left( \sqrt{\frac{m}{\lambda}}\log\frac{\epsilon_0\sqrt{m\lambda}}{\epsilon} \right)
\end{align*}

The same argument as was used in the discussion after theorem \ref{thm:RandSBLower} to show the $\Omega(m)$ term of the lower bound can be used here, as the function in that construction was both smooth and strongly convex.

As mentioned above, Lemma \ref{lem:keyrand4} requires that the norm of all query points be bounded by $B$. We argued above that the optimum of $F$ must lie within the $B$-ball around the origin. Even so, we can slightly modify our construction to show that even if the algorithm were allowed to query arbitrarily large points, it still would not be able to optimize $F$ quickly. Define $f'_{i,j}$ through its gradient as:
\[ 
\nabla f'_{i,j}(x) = \begin{cases} \nabla f_{i,j}(x) & \norm{x} \leq B 
\\ \nabla f_{i,j}\left(B\frac{x}{\norm{x}}\right) - \lambda B\normalized{x} + \frac{\lambda}{2}\norm{x}^2 & \norm{x} > B \end{cases}
\]
This new function is continuous, $\gamma$-smooth, and $\lambda$-strongly convex, and it also has the property that querying the function at a point $x$ outside the $B$-ball, it is no more informative than querying at $B\frac{x}{\norm{x}}$. That is, an algorithm that was not allowed to query outside the $B$-ball could simulate the result of such queries. Since that restricted algorithm can't optimize $F'$ well, as proven above, another algorithm which could query at arbitrary points, therefore could not either.
\end{proof}

%%%%%%%%%%%%%%%%%%%%%%%%%%%%%%%%%%%%%%%%%%%%%
%%%%%%%%%%					BIG DATA		   	     	    %%%%%%%%%%
%%%%%%%%%%%%%%%%%%%%%%%%%%%%%%%%%%%%%%%%%%%%%
\subsection{Non-smooth components when $\epsilon$ is large \label{appendix:bigm}}
\begin{theorem}
For any $L,B >0$, any $\frac{10}{\sqrt{m}} < \epsilon < \frac{1}{4}$, and any $m \geq 161$, there exists $m$ functions $f_i$ which are convex and $L$-Lipschitz continuous defined on $\mathcal{X} = \set{x \in \mathbb{R} : \abs{x}\leq B}$ such that for any randomized algorithm $A$ for solving problem \eqref{eq:main} using access to $h_F$, $A$ must make at least $\Omega\left(\frac{L^2B^2}{\epsilon^2}\right)$ queries to $h_F$ in order to find a point $\hat{x}$ such that $\mathbb{E}[F(\hat{x}) - F(x^*)] < \epsilon$. 
\end{theorem}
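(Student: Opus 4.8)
The plan is to turn $\epsilon$-optimization of $F$ into recovering the hidden bias of a biased coin, and then invoke the standard $\Omega(1/\epsilon^2)$ sample-complexity lower bound for that problem. First I would normalize to $L=B=1$: rescaling $f_i\mapsto Lf_i$ and taking $\mathcal X=\{|x|\le B\}$ turns an $\epsilon$-suboptimality target into an $\epsilon/(LB)$-target, so an $\Omega(1/\epsilon^2)$ bound in the normalized problem becomes $\Omega(L^2B^2/\epsilon^2)$. Then I would draw a uniform hidden bit $Z\in\{-1,+1\}$, set $p=\tfrac12+2\epsilon Z\in(0,1)$ (using $\epsilon<\tfrac14$), draw $s_1,\dots,s_m$ i.i.d.\ with $\Pr(s_i=+1)=p$, and let $f_i(x)=s_i x$ on $\mathcal X=[-1,1]$; each $f_i$ is convex and $1$-Lipschitz. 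With $\sigma:=\tfrac1m\sum_i s_i$ we get $F(x)=\sigma x$, hence $x^*=-\operatorname{sign}(\sigma)$ and $F(x)-F(x^*)=|\sigma|\,(1+\operatorname{sign}(\sigma)\,x)$ for all $x\in[-1,1]$.

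\textbf{Reduction to coin testing.} I would introduce the high-probability event $\mathcal G=\{|\sigma-(2p-1)|<\epsilon\}$; since $\epsilon>10/\sqrt m$ forces $m\epsilon^2>100$, Hoeffding gives $\Pr(\mathcal G^c)\le 2e^{-2m\epsilon^2}\le 2e^{-200}$. On $\mathcal G$ one has $\operatorname{sign}(\sigma)=Z$ (because $|2p-1|=4\epsilon>\epsilon$) and $|\sigma|\ge 3\epsilon$, so any $\epsilon$-suboptimal $\hat x$ obeys $1+Z\hat x<\epsilon/|\sigma|\le\tfrac13$, which forces $\operatorname{sign}(\hat x)=-Z$; thus $\hat Z:=-\operatorname{sign}(\hat x)$ recovers $Z$ whenever $\mathcal G$ holds and $\hat x$ is $\epsilon$-suboptimal. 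Next I would note that the oracle leaks only $s_i$: for $f_i(x)=s_i x$, all of $f_i(x)=s_i x$, $\nabla f_i(x)=s_i$, and $\prox_{f_i}(x,\beta)=\Pi_{[-1,1]}(x-s_i/\beta)$ are deterministic functions of $(s_i,x,\beta)$, and because the $s_i$ are i.i.d.\ given $Z$ and re-querying a component reveals nothing new, the responses observed in any $T$ queries are, conditionally on $Z$, exactly a string of at most $T$ i.i.d.\ $\mathrm{Bernoulli}(p)$ bits, irrespective of how the algorithm adaptively chose components. So $\hat Z$ is a (possibly randomized) estimator of $Z$ based on at most $T$ i.i.d.\ samples from $\mathrm{Bernoulli}(\tfrac12+2\epsilon Z)$.

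\textbf{Finishing via Le Cam.} By Yao's principle it is enough to treat deterministic $T$-query algorithms against this random ensemble. For such an algorithm, using $1+Z\hat x\ge 0$ always and $1+Z\hat x\ge 1$ when $\hat Z\ne Z$, I get $\mathbb{E}[F(\hat x)-F(x^*)]\ge 3\epsilon\,\mathbb{E}[\mathbf{1}_{\mathcal G}(1+Z\hat x)]\ge 3\epsilon(\Pr(\hat Z\ne Z)-2e^{-200})$. Since $\mathrm{KL}(\mathrm{Bern}(\tfrac12-2\epsilon)\,\|\,\mathrm{Bern}(\tfrac12+2\epsilon))=O(\epsilon^2)$ for $\epsilon<\tfrac14$, Le Cam's two-point inequality (cf.\ \citep{Agarwal09,LeCam}) gives $\Pr(\hat Z\ne Z)\ge\tfrac12(1-\sqrt{O(T\epsilon^2)})$; hence there is a universal constant $c$ such that $T\le c/\epsilon^2$ implies $\Pr(\hat Z\ne Z)\ge 0.4$, and then $\mathbb{E}[F(\hat x)-F(x^*)]\ge 3\epsilon(0.4-2e^{-200})>\epsilon$ on some instance. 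Therefore making $\mathbb{E}[F(\hat x)-F(x^*)]<\epsilon$ requires $T=\Omega(1/\epsilon^2)$ queries — the hypothesis $\epsilon>10/\sqrt m$ also ensures $1/\epsilon^2<m$, so there really are $\Omega(1/\epsilon^2)$ distinct components available to query — and undoing the rescaling yields $\Omega(L^2B^2/\epsilon^2)$.

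\textbf{Main obstacle.} The step I expect to need the most care is the reduction to i.i.d.\ sampling: confirming that the prox field carries no information beyond $s_i$ (immediate from the closed form) and, more substantively, that an \emph{adaptive} choice of which components to query gives no advantage, so the algorithm's entire transcript is statistically equivalent to $T$ fixed i.i.d.\ $\mathrm{Bernoulli}(p)$ samples. This follows from the exchangeability of the $s_i$ given $Z$: whichever not-yet-queried component is asked next, its sign is a fresh $\mathrm{Bernoulli}(p)$ bit independent of everything seen so far. The rest — Hoeffding for $\sigma$, the bookkeeping around $\mathcal G$, Yao's principle, and the textbook Le Cam bound for the biased-coin problem — is routine; I would only need to check that the stated constants ($\epsilon<\tfrac14$, $\epsilon>10/\sqrt m$, $m\ge161$) survive the (freely adjustable) choice of bias $2\epsilon$ and the $\mathcal G$-threshold, with the $\Omega(\cdot)$ absorbing any slack.
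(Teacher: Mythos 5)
Your proposal follows essentially the same approach as the paper: both encode a hidden Bernoulli bias $p=\tfrac12\pm 2\epsilon$ via components $f_i(x)=\pm x$ on $[-B,B]$, both use concentration of $\tfrac1m\sum_i s_i$ to show that $\epsilon$-suboptimality forces correct sign recovery, and both close with the standard $\Omega(1/\epsilon^2)$ lower bound for distinguishing the two coins \citep{Agarwal09,LeCam}. The extra detail you supply---the explicit closed form showing the prox leaks only $s_i$, Yao's principle, and the exchangeability argument that adaptive re-queries add nothing---is exactly what the paper's one-line citation to those references is absorbing, so the arguments coincide.
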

\begin{proof}
Without loss of generality, we can assume that $L = B = 1$. We construct $m$ functions $f_i$ on $\mathbb{R}^1$ in the following manner: first, sample $p$ from the following distribution
\begin{equation*}
p = \begin{cases}  \frac{1}{2} - 2\epsilon & \text{w.p. } \frac{1}{2} \\ \frac{1}{2} + 2\epsilon &  \text{w.p. } \frac{1}{2}  \end{cases}
\end{equation*}
Then for $i = 1,...,m$, we define
\begin{equation*}
 f_i(x) = \begin{cases} x & \text{w.p. } p \\ -x & \text{w.p. } 1-p \end{cases} 
 \end{equation*}
Consider now the task of optimizing $F(x) = \frac{1}{m}\sum_{i=1}^m f_i(x) = \frac{Yx}{m}$. Clearly, $F$ is optimized at $-\text{sign}(Y)$, and as long as $\abs{Y} > 2m\epsilon$, then any $x$ which is $\epsilon$-suboptimal given $\text{sign}(Y) = +1$ must be at least $3\epsilon$-suboptimal given $\text{sign}(Y) = -1$. Using Chernoff bounds, $\mathbb{P}(\abs{Y} \leq 2m\epsilon) \leq \exp(-\frac{\epsilon^2m}{2}) < \exp(-5)$. Therefore, since the expected suboptimality of an iterate $x$ is at least
\begin{align*}
\mathbb{E}\left[ F(x) - F(x^*) \right] &\geq 3\mathbb{P}\left(\text{sign}(x) \neq -\text{sign}(Y) \middle| \abs{Y} \geq 2m\epsilon \right)\mathbb{P}\left( \abs{Y} \geq 2m\epsilon \right)\cdot \epsilon \\
&> 3(1 - \exp(-5)) \mathbb{P}\left(\text{sign}(x) \neq -\text{sign}(Y) \middle| \abs{Y} \geq 2m\epsilon \right)\cdot \epsilon 
\end{align*}
Therefore, until the algorithm has made enough queries so that 
\[ \mathbb{P}\left(\text{sign}(x) \neq -\text{sign}(Y) \middle| \abs{Y} \geq 2m\epsilon \right) < \frac{1}{3 - 3\exp(-5)} \]
the expected suboptimality is greater than $\epsilon$. By a standard information theoretic result \citep{Agarwal09,LeCam}, achieving that probability of success at predicting the sign of $Y$ implies a comparable level of accuracy at distinguishing between $p = 0.5 + 2\epsilon$ and $p = 0.5 - 2\epsilon$, and that requires at least $\frac{1}{128\epsilon^2}$ queries to $h_F$. 
\end{proof}
It is straightforward to show a lower bound of $\Omega\left(\frac{L^2}{\lambda\epsilon}\right)$ for strongly convex functions using the same reduction by regularization as in the proofs of theorems \ref{thm:DetLscLower} and \ref{thm:RandLscLower}. We also note that this lower bound implies a lower bound of $\Omega(m)$ for smooth functions, whether strongly convex or not. Each function $f_i$ in this construction is linear, and therefore is trivially $0$-smooth. We make the gradient of each function arbitrarily large by multiplying each $f_i$ by a large number. As the multiplier grows, the algorithm need be more and more certain of the sign of $Y$ in order to achieve a expected suboptimality of less than $\epsilon$. Thus for a sufficiently large multiplier, the algorithm must query $\Omega(m)$ functions. We cannot force it to query more than that, of course, since it only needs to query $m$ functions to know the sign of $Y$ with probability 1.

\end{document}